\newenvironment{enum}
{\begin{enumerate}[leftmargin=0pt]
  \setlength{\itemsep}{2pt}
  \setlength{\parskip}{0pt}
  \setlength{\parsep}{0pt}}
{\end{enumerate}}
\newenvironment{itm}
{\begin{itemize}[leftmargin=0pt]
  \setlength{\itemsep}{2pt}
  \setlength{\parskip}{0pt}
  \setlength{\parsep}{0pt}}
{\end{itemize}}
\newtheorem{theorem}{Theorem}[section]
\newtheorem{lemma}[theorem]{Lemma}
\newtheorem{proposition}[theorem]{Proposition}
\newtheorem{corollary}[theorem]{Corollary}
\theoremstyle{remark}
\newtheorem{observation}[theorem]{Remark}
\newtheorem{assumption}{Assumption}
\newtheorem{definition}{Definition}[section]
\newcommand{\les}{\lesssim}
\newcommand{\dd}{\,d}
\newcommand{\R}{\mathbb R}
\renewcommand{\H}{\mathbb H}
\newcommand{\C}{\mathbb C}
\newcommand{\lb}{\label}
\newcommand{\be}{\begin{equation}}
\newcommand{\ee}{\end{equation}}
\newcommand{\mc}{\mathcal}
\newcommand{\B}{\mc B}
\newcommand{\K}{\mc K}
\newcommand{\M}{\mc M}
\newcommand{\D}{\mc D}
\newcommand{\ov}{\overline}
\newcommand{\ds}{\displaystyle}
\DeclareMathOperator{\re}{Re}
\DeclareMathOperator{\im}{Im}
\DeclareMathOperator{\erfc}{erfc}
\DeclareMathOperator{\sgn}{sgn}
\DeclareMathOperator{\Ker}{Ker}
\renewcommand{\ker}{\Ker}
\DeclareMathOperator{\supp}{supp}
\title[Spectral multipliers II]{Spectral multipliers II: elliptic and parabolic operators and Bochner--Riesz means}
\author{Marius Beceanu}
\author{Michael Goldberg}
\subjclass{35J08, 35J10, 35J25, 35K10, 35K15, 37J11, 42B08, 42B15, 42B37, 47A25, 47D60}
\begin{document}
\maketitle

\begin{abstract} We establish estimates for the Poisson kernel, the heat kernel, and Bochner--Riesz means defined in terms of $H=-\Delta+V$, where $V$ is an arbitrarily large rough real-valued scalar potential and $H$ can have negative eigenvalues. All results are in three space dimensions.

We eliminate several unnecessary conditions on $V$, leaving just $V \in \K_0$, where $\K_0$ is the closure of the space of test functions in the global Kato class $\K$, meaning
$$
\sup_{y \in \R^3} \int_{\R^3} \frac {|V(x)| \dd x}{|x-y|} < \infty.
$$

For the spectral multiplier bounds, we assume that $H$ has no zero or positive energy bound states. For $V \in \K_0$, we prove that $H$ has at most a finite number of negative bound states. If in addition $V \in \dot W^{-1/4, 4/3}$, then by \cite{golsch} and \cite{kota} there are no positive energy bound states.

%All our previous estimates from \cite{becgolschr} and \cite{becgol} and more than half of those in \cite{becche} and \cite{becgol3} are then valid for $V \in \K_0$.
\end{abstract}

\tableofcontents

\section{Introduction}
\subsection{Main results} Consider Hamiltonians of the form $H=-\Delta+V$ in $\R^3$. Here $-\Delta$ is the free Hamiltonian, corresponding to free motion, while $V$ is a scalar real-valued (i.e.~self-adjoint) potential. Most results carry over to the non-selfadjoint case (complex or matrix-valued), which will be treated elsewhere.

Take $V$ in the (global) Kato class introduced in \cite{rodsch}, meaning that its global Kato norm
\be\lb{katonorm}
\|V\|_\K := \sup_{y \in \R^3} \int_{\R^3} \frac{|V(x)| \dd x}{|x-y|} < \infty
\ee
is finite. Under a similar local condition, $H=-\Delta+V$ is self-adjoint, see \cite{simon}. Note that $\K \subset \dot H^{-1}_{loc}$ and, acting by multiplication, $\K \subset \B(\dot H^1, \dot H^{-1})$.

Further assume that $V \in \K_0=\ov{\D}_\K$, the Kato class closure of the space of test functions $\D = C^\infty_c$. The space $\K_0$ contains the Lorentz space $L^{3/2, 1}$, see \cite{bergh} for more on this topic.

Throughout the paper we also make the following spectral assumption:
\begin{assumption}\lb{assum} Zero is a regular point of the spectrum of $H$, meaning that there are no zero energy eigenstates or resonances, meaning that $I+V (-\Delta)^{-1} \in \B(\M)$ is invertible.
\end{assumption}

Our main results are the following:
\begin{proposition}[Proposition \ref{poisson}, Poisson kernel bounds] Assume that $V \in \K_0$ and $H$ has no zero or positive energy bound states. If in addition $H=-\Delta+V$ has no negative energy bound states, the perturbed Poisson kernel $e^{-t\sqrt H}$ is dominated by the free Poisson kernel:
$$
|e^{-t\sqrt H}(x, y)| \les \frac t {(t^2 + |x-y|^2)^2}.
$$
If $H$ has negative eigenvalues, then the kernel of $e^{-t\sqrt H} P_c$ is the sum of two terms, $K_1(t)$ and $K_2(t)$, such that $K_1(t)$ is dominated by the free Poisson kernel and $K_2(t)$ is dominated by a $t$-independent kernel $K_{20} \in \B(L^p)$, $1 \leq p \leq \infty$, in a manner that also ensures uniform decay as $t \to \infty$:
$$
|K_2(t)(x, y)| \les \langle t \rangle^{-1} K_{20}(x, y).
$$
Furthermore, as $t \to 0$, $K_2(t)$ converges strongly to $-P_p$ in $\B(L^p)$, $1 \leq p \leq \infty$, and for $t \geq 1$ $K_2(t)$ is also dominated by the free Poisson kernel.

Finally, the following bounds hold for the perturbed Poisson kernel: for $t \leq 1$
$$
|e^{-t\sqrt H}(x, y)| \les \frac t {(t^2 + |x-y|^2)^2}
$$
and for $t \geq 1$
$$
|e^{-t\sqrt H}P_c(x, y)| \les \frac t {(t^2 + |x-y|^2)^2}.
$$
\end{proposition}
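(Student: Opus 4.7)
The natural approach is subordination. Starting from
\be
e^{-t\sqrt H} P_c = \frac{t}{2\sqrt\pi} \int_0^\infty s^{-3/2} e^{-t^2/(4s)} e^{-sH} P_c \dd s,
\ee
the Poisson kernel bounds reduce to heat kernel bounds for $e^{-sH} P_c$, which are the subject of a companion result elsewhere in the paper. The first step is to invoke those bounds: a Gaussian-type estimate $|e^{-sH} P_c(x,y)| \les s^{-3/2} e^{-c|x-y|^2/s}$ that controls the small-$s$ regime, together with a $s$-uniform, $P_p$-like dominating kernel controlling the large-$s$ regime, where $e^{-sH}P_c$ does not decay like a Gaussian because the continuous spectrum reaches $0$.

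If $H$ has no negative eigenvalues, then $P_c = I$ and the heat kernel is purely Gaussian at all scales. Substituting the Gaussian bound into the subordination integral and computing explicitly (the integrand peaks near $s \sim t^2 + |x-y|^2$) produces exactly the free Poisson kernel $t/(t^2+|x-y|^2)^2$, proving the first claim.

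When negative eigenvalues are present, I would split $e^{-sH}P_c$ into a Gaussian part and a ``projection correction'' term that encodes the removal of bound states, and split the subordination integral accordingly into $K_1(t)+K_2(t)$. The term $K_1$, built from the Gaussian piece, obeys the free Poisson bound by the same elementary integration as above and by itself controls the full kernel for $t \leq 1$. The term $K_2$, built from the projection correction, is dominated by a $t$-independent kernel $K_{20}$ bounded on every $L^p$, $1 \leq p \leq \infty$; the factor $\langle t\rangle^{-1}$ emerges from the prefactor $t$ in the subordination formula offset against the slowly-decaying $s$-integrand. For the strong convergence $K_2(t) \to -P_p$ in $\B(L^p)$ as $t \to 0^+$, I would combine the strong convergence $e^{-t\sqrt H}P_c \to P_c = I - P_p$ with $K_1(t) \to I$ strongly (since $K_1$ agrees at leading order with the free Poisson semigroup near $t=0$). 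For $t \geq 1$, comparing $\langle t\rangle^{-1} K_{20}(x,y)$ pointwise with $t/(t^2+|x-y|^2)^2$ recovers the combined free-Poisson-type bound on $e^{-t\sqrt H}P_c$.

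The main obstacle is on the heat-kernel side: producing a decomposition $e^{-sH}P_c = G(s) + R(s)$ with $G(s)$ obeying Gaussian bounds uniformly in $s$ and $R(s)$ dominated, uniformly in $s$, by an $L^p$-to-$L^p$ kernel that correctly captures the bound-state projection. Once that decomposition is available, the rest---evaluating the subordination integral, verifying the $\langle t\rangle^{-1}$ decay, and checking strong convergence of $K_2$ to $-P_p$ as $t\to 0^+$---is a routine computation built around the identity displayed above.
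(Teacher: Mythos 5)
Your subordination route is genuinely different from the paper's proof, which never passes through the heat kernel at all. The paper instead writes
$$
e^{-t\sqrt H}P_c = \frac{4}{\pi}\int_0^\infty \frac{t\tau}{(t^2+\tau^2)^2}\,T(\tau)\,d\tau,
$$
where $T(\tau)=\chi_{\tau\ge 0}\sin(\tau\sqrt H)P_c/\sqrt H$ is the forward wave propagator, and splits the $\tau$-integral at the light cone $\tau=\delta|x-y|$. Inside the light cone it uses the measure bound $\sup_{x,y}\|\tau T(\tau)\|_{\M_\tau}<\infty$, while outside it uses finite speed of propagation to replace $T(\tau)$ by the explicit finite-rank expression $-\sum_k \sinh(\tau\sqrt{-\lambda_k})/\sqrt{-\lambda_k}\,f_k\otimes\bar f_k$, whose exponential growth in $\tau$ is beaten by the Agmon decay of the eigenfunctions on the region $\tau<\delta|x-y|$. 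The advantage of the paper's route is that the boundary between $K_1$ and $K_2$ is geometric (the light cone) and $K_2$ is \emph{exactly} a sum over bound states, which makes the strong convergence $K_2(t)\to-P_p$ and the $t\ge 1$ Poisson bound transparent via direct computation with $\sinh$, $\cosh$, and an integration by parts. Your route buys conceptual economy (heat $\Rightarrow$ Poisson by subordination), at the price of needing the heat-kernel decomposition to be uniform in $s$.

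That last point is the genuine gap, and you flag it yourself as ``the main obstacle.'' The paper's Proposition \ref{heat} does not hand you a decomposition $e^{-sH}P_c=G(s)+R(s)$ with $R(s)$ uniformly bounded in $s$: its $K_2(s)$ carries a factor $s^{-1}e^{-(\sqrt{-\lambda_1}/2-\epsilon)s}$ that blows up as $s\to 0$. Plugging that bound directly into $\frac{t}{2\sqrt\pi}\int_0^\infty s^{-3/2}e^{-t^2/(4s)}\,K_2(s)\,ds$ yields a $t^{-2}$-type divergence as $t\to 0$, not the claimed $\langle t\rangle^{-1}$ kernel. The fix is to notice that the paper also proves the \emph{full} kernel $e^{-sH}$ is Gaussian for $s\le 1$, so on that range you may take $G(s)=e^{-sH}$ and $R(s)=-e^{-sH}P_p$, which \emph{is} uniformly bounded, and use the paper's $K_1,K_2$ only for $s\ge 1$. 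You should make this splicing explicit. Two further things need filling in: (i) the claim that ``comparing $\langle t\rangle^{-1}K_{20}$ pointwise with the Poisson kernel recovers the $t\ge1$ bound'' fails when $|x-y|\ll t$ (there $\langle t\rangle^{-1}\sim t^{-1}$ but the Poisson kernel is $\sim t^{-3}$); you also need a separate $t^{-3}$ estimate for $K_2(t)$, which does follow from the exponential decay of $R(s)$ at large $s$, but is not a free consequence of the $\langle t\rangle^{-1}K_{20}$ bound; and (ii) the strong convergence $K_1(t)\to I$ in $\B(L^p)$ for every $1\le p\le\infty$ (including $p=\infty$) is not automatic from a Gaussian pointwise bound on the heat part; one needs $K_1(t)$ to be a genuine approximate identity, which requires controlling $\int K_1(t)(x,y)\,dy-1$ and is not argued in your sketch. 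The paper avoids both issues by working directly with the explicit $K_2$ formula and dominated convergence.
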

Similar bounds can be proved for all $t$ derivatives of the Poisson kernel, as well as for one antiderivative, $\frac {e^{-t \sqrt H}}{\sqrt H}$.

\begin{proposition}[Proposition \ref{heat}, heat kernel bounds] Assume that $V \in \K_0$ and $H$ has no zero or positive energy bound states. If in addition $H$ has no negative energy bound states, then the perturbed heat kernel $e^{-tH}$ is dominated by the free heat kernel:
$$
|e^{-tH}(x, y)| \les t^{-3/2} e^{-\frac{|x-y|^2}{4t}}.
$$
If $H$ has negative eigenvalues, then $e^{-tH}P_c(x, y)$ can be decomposed into two parts, $K_1(t)$ and $K_2(t)$, such that for every $\epsilon>0$ $K_1(t)$ satisfies the same Gaussian bound with a constant of size $\epsilon^{-1}$, and $K_2(t)$ is dominated by a $t$-independent, exponentially decaying kernel $K_{2\epsilon} \in \B(L^p)$, $1 \leq p \leq \infty$, in a manner that also ensures uniform decay as $t \to \infty$:
$$
|K_2(t)(x, y)| \les t^{-1} e^{-(\frac {\sqrt{-\lambda_1}} 2 - \epsilon) t} K_{2\epsilon}(x, y).
$$
Here $\lambda_1$ is the (negative) eigenvalue of $H$ closest to $0$ and $K_{2\epsilon}$ decays exponentially in $x$ and $y$.

Finally, for $t \leq 1$, 
$$
|e^{-tH}(x, y)| \les t^{-3/2} e^{-\frac {|x-y|^2}{4t}}.
$$
The same is true for all $t>0$, but with an exponentially growing factor of $e^{-\lambda_N t}$ instead of $t^{-3/2}$, where $\lambda_N<0$ is the smallest eigenvalue of $H$.
\end{proposition}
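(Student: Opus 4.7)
The argument combines a short-time perturbative bound, based on the Dyson--Phillips iteration of Duhamel's formula, with a long-time spectral representation via Stone's formula, and reads off the decomposition $K_1+K_2$ from a cutoff in the spectral variable.

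For $t\le 1$, expand $e^{-tH}=e^{t\Delta}-\int_0^t e^{-(t-s)H}V e^{s\Delta}\,ds$ as the Dyson--Phillips series. The $n$-th term is an iterated convolution of free heat kernels $p_t(x,y)=(4\pi t)^{-3/2}e^{-|x-y|^2/(4t)}$ interleaved with $|V|$. A pointwise Khasminskii-type inequality of the shape
$$
\int_0^t\!\!\int p_{t-s}(x,z)|V(z)|p_s(z,y)\,dz\,ds \les \sqrt{t}\,\|V\|_\K\,p_t(x,y)
$$
bounds the $n$-th term by $(C\sqrt{t}\,\|V\|_\K)^n p_t(x,y)$. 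Approximating $V\in\K_0$ by compactly supported smooth functions and summing the geometric series for $t\le 1$ produces $|e^{-tH}(x,y)|\les t^{-3/2}e^{-|x-y|^2/(4t)}$. This step does not use Assumption \ref{assum}.

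For $t>1$, start from Stone's formula
$$
e^{-tH}P_c(x,y)=\frac{1}{\pi}\int_0^\infty e^{-t\mu^2}\,\im R_V(\mu^2+i0)(x,y)\,2\mu\,d\mu
$$
combined with $R_V=(I+R_0V)^{-1}R_0$. Assumption \ref{assum} supplies a uniform bound on $(I+R_0V)^{-1}$ down to zero energy, which together with the explicit kernel of $R_0(\mu^2+i0)$ gives a dominating kernel for $\im R_V(\mu^2+i0)$. Integrated against $e^{-t\mu^2}2\mu$, this dominating kernel yields the Gaussian bound for $e^{-tH}P_c$; when $H$ has no negative eigenvalues one has $P_c=I$ and the first assertion is complete. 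When negative eigenvalues are present, introduce a smooth cutoff $\chi_\epsilon(\mu)$ supported in $|\mu|\le\sqrt{-\lambda_1}/2-\epsilon$ with $\chi_\epsilon\equiv 1$ near $0$, and set
$$
K_2(t;x,y):=\frac{1}{\pi}\int_0^\infty e^{-t\mu^2}\chi_\epsilon(\mu)\im R_V(\mu^2+i0)(x,y)\,2\mu\,d\mu,\qquad K_1:=e^{-tH}P_c-K_2.
$$
On $\supp(1-\chi_\epsilon)$ the inverse $(I+R_0V)^{-1}$ remains bounded by $C/\epsilon$, so $K_1$ inherits the Gaussian bound with constant $O(\epsilon^{-1})$. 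The bounded $\mu$-support of $\chi_\epsilon$ forces $K_2$ to pick up a $t^{-1}$ prefactor from integrating $\mu e^{-t\mu^2}$, while the additional decay $e^{-(\sqrt{-\lambda_1}/2-\epsilon)t}$ and the $t$-independent, exponentially decaying majorant $K_{2\epsilon}$ are extracted by shifting the $\mu$-contour into the complex plane and using the meromorphic continuation of $R_V(\mu^2)$: after $P_c$ cancels the residue at $\mu=i\sqrt{-\lambda_1}$, the kernel is analytic in a strip, and Agmon/Combes--Thomas weighted resolvent estimates supply the spatial exponential decay of $K_{2\epsilon}$. The unprojected bounds then follow from $e^{-tH}=e^{-tH}P_c+\sum_j e^{-t\lambda_j}\phi_j\otimes\phi_j$: the short-time bound for $t\le 1$ controls everything, and for all $t>0$ the bound-state sum contributes at most $e^{-\lambda_N t}$ growth.

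The principal technical obstacle is the contour-shift defining $K_2$: one must justify meromorphic continuation of $R_V(\mu^2+i0)$ from the positive $\mu$-axis into a strip of width $\sqrt{-\lambda_1}/2-\epsilon$, establish uniform bounds on $(I+R_0V)^{-1}$ along this contour, and produce Agmon-type weighted resolvent estimates that yield $K_{2\epsilon}$ with the claimed exponential spatial decay. All three ingredients rest on Assumption \ref{assum} together with the hypothesis of no zero or positive energy bound states, which jointly prevent any real-axis singularity of $R_V(\mu^2)$ from interfering with the contour deformation.
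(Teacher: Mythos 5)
Two steps in your proposal do not survive scrutiny. The short-time Khasminskii estimate $\int_0^t\int p_{t-s}(x,z)|V(z)|p_s(z,y)\dd z\dd s\les\sqrt t\,\|V\|_\K\,p_t(x,y)$ is incompatible with scaling: the ratio of the left side to $p_t(x,y)$ is invariant under the parabolic rescaling $V\mapsto\lambda^2V(\lambda\cdot)$, $(t,x,y)\mapsto(t/\lambda^2,x/\lambda,y/\lambda)$, and $\|V\|_\K$ is invariant, but $\sqrt t$ rescales to $\lambda^{-1}\sqrt t$; applying the claimed bound to the rescaled potential and undoing the rescaling would force the ratio to be $O(\lambda^{-1})$ for every $\lambda$, hence zero. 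The scale-correct estimate carries no power of $t$ and only gives $\les\|V\|_\K\,p_t(x,y)$, which provides no smallness once $\|V\|_\K$ is large --- and the hypotheses here expressly allow arbitrarily large $V$ --- so the Dyson--Phillips series need not converge. Any smallness must be extracted from the local Kato modulus of $V\in\K_0$, not from $\sqrt t$, and even then Duhamel iteration generically degrades the Gaussian constant below $e^{-|x-y|^2/(4t)}$.

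The contour shift defining $K_2$ is also off. The Stone representation of $e^{-tH}P_c$ integrates only over $[0,\infty)$ and never sees the poles of $R_V(\mu^2)$ at $\mu=i\sqrt{-\lambda_k}$, which sit on the physical sheet off the integration path; there is no residue for $P_c$ to ``cancel.'' Moreover, on a horizontal contour $\im\mu=c>0$ one has $|e^{-t\mu^2}|=e^{t(c^2-(\re\mu)^2)}$, which grows like $e^{tc^2}$ precisely on the low-frequency support of $\chi_\epsilon$, so the shift yields exponential growth rather than the claimed $e^{-(\sqrt{-\lambda_1}/2-\epsilon)t}$ decay; and continuing $R_V(\mu^2-i0)$ into $\im\mu>0$ goes onto the second Riemann sheet, where for $V\in\K$ the resolvent is bounded only in exponentially weighted spaces. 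The paper avoids all of this by a genuinely different device: it subordinates the heat kernel to the wave propagator, writing $e^{-tH}P_c=\frac1{2\sqrt\pi}t^{-3/2}\int_0^\infty\tau e^{-\tau^2/(4t)}T(\tau)\dd\tau$ with $T(\tau)=\chi_{\tau\ge0}\sin(\tau\sqrt H)P_c/\sqrt H$, and splits by the light cone $\tau\gtrless|x-y|$ rather than by a spectral cutoff. Inside the cone, $\sup_{x,y}\|\tau T(\tau)(x,y)\|_{\M_\tau}<\infty$ together with $e^{-\tau^2/(4t)}\le e^{-|x-y|^2/(4t)}$ gives the sharp Gaussian at once; outside, finite speed of propagation produces the closed formula (\ref{explicit}) for $T$ in terms of the negative bound states, and the $\sinh(\tau\sqrt{-\lambda_k})$ growth is absorbed by the eigenfunction decay (\ref{eigen}) --- this is exactly where the $t^{-1}e^{-(\sqrt{-\lambda_1}/2-\epsilon)t}K_{2\epsilon}(x,y)$ bound comes from, with no contour deformation at all.
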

Similar bounds can be proved for all $t$ derivatives of the heat kernel, as well as for one antiderivative, $\frac {e^{-tH}}{H}$.

Consider the maximal function
$$
[Mf](x) = \sup_{x \in B(y, R)} \frac 1 {|B(y, r)|} \int_{B(y, R)} |f(z)| \dd z.
$$
It is known that $M$ is bounded on $L^p$, $1<p \leq \infty$ and on $BMO$ and is of weak-$(1, 1)$ type. The previous two results, together with some basic facts about the bound states of $H$, imply that
\begin{corollary} Assuming that $V \in \K_0$ and $H=-\Delta+V$ has no zero or positive energy bound states,
$$
\sup_{t > 0} |e^{-t\sqrt H} f| \les Mf,\ \sup_{t \in (0, 1]} |e^{-t H} f| \les Mf,\ \sup_{t >0} |e^{-t H} P_c f| \les Mf.
$$
\end{corollary}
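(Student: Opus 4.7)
The plan is to reduce each of the three bounds to the classical Hardy--Littlewood fact that convolution against a radially decreasing $L^1$ function $\psi$ satisfies $|\psi\ast g|(x)\les\|\psi\|_{L^1}Mg(x)$, applied to the free Poisson kernel $P_t(x)=ct(t^2+|x|^2)^{-2}$, the free Gaussian $G_t$, and kernels of the form $e^{-c|x-y|}$; each has a uniform (in $t$) radially decreasing $L^1$ majorant.

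The second estimate is immediate: the bound $|e^{-tH}(x,y)|\les t^{-3/2}e^{-|x-y|^2/(4t)}$ for $t\in(0,1]$ from the heat kernel proposition gives $|e^{-tH}f(x)|\les G_t\ast|f|(x)\les Mf(x)$. For the third estimate I would split into $t\le 1$ and $t\ge 1$. For small $t$ I would write $e^{-tH}P_c=e^{-tH}-e^{-tH}P_p$; the first term is controlled as in the second estimate, and the bound-state part uses that there are only finitely many eigenfunctions $\phi_j$, each bounded and exponentially decaying---a standard consequence of $V\in\K_0$ and $\lambda_j<0$ via the fixed-point identity $\phi_j=-(-\Delta-\lambda_j)^{-1}V\phi_j$. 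The triangle inequality $|x|+|y|\ge|x-y|$ then majorizes $\sum_j|\phi_j(x)\phi_j(y)|$ by $Ce^{-c|x-y|}$, whose convolution against $|f|$ is $\les Mf(x)$. For $t\ge 1$ I would use the decomposition $K_1(t)+K_2(t)$ supplied by the same proposition: the Gaussian bound on $K_1$ is handled as above, and for $K_2$, fixing $\epsilon<\tfrac{1}{2}\sqrt{-\lambda_1}$ makes the prefactor $t^{-1}e^{-(\sqrt{-\lambda_1}/2-\epsilon)t}$ bounded on $[1,\infty)$, after which $K_{2\epsilon}(x,y)$ is again majorized by $Ce^{-c|x-y|}$ via the same triangle-inequality reduction.

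The first estimate follows the identical script with the Poisson kernel proposition in place of the heat kernel one: for $t\le 1$ the Poisson majorant $|e^{-t\sqrt H}(x,y)|\les t/(t^2+|x-y|^2)^2$ suffices directly, and for $t\ge 1$ one decomposes $e^{-t\sqrt H}f=e^{-t\sqrt H}P_c f+e^{-t\sqrt H}P_p f$. The $P_c$ part is controlled by the Poisson majorant on $e^{-t\sqrt H}P_c$ stated in the proposition, and the $P_p$ part is uniformly in $t$ pointwise bounded by $\sum_j|\phi_j(x)\phi_j(y)|$: with the principal branch $\sqrt{\lambda_j}=i\sqrt{-\lambda_j}$ one has $|e^{-t\sqrt{\lambda_j}}|=1$, so the same exponential-decay majorant closes the estimate.

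I do not anticipate a substantive obstacle---the two kernel propositions do the heavy lifting, and the only supplementary input is the boundedness and exponential decay of the finitely many eigenfunctions, which is a routine consequence of $V\in\K_0$ and $\lambda_j<0$ (Agmon-type estimates, or iterating the resolvent identity above).
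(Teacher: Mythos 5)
Your proof is correct and follows the route the paper implicitly intends: the paper states the corollary follows from "the previous two results [Propositions \ref{poisson} and \ref{heat}], together with some basic facts about the bound states of $H$," without supplying details, and your argument is exactly the expected filling-in of those details---domination by a radially decreasing $L^1$ majorant (free Poisson kernel, free Gaussian, or $e^{-c|x-y|}$ from the triangle inequality applied to the exponentially decaying eigenfunction products), combined with the standard Hardy--Littlewood bound, with the $t\le 1$ / $t\ge 1$ split and the $P_p$ correction handled just as one should.

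One small point worth stating explicitly, since it is the only place where the reduction to a genuine convolution kernel is nontrivial: the kernels $\sum_j|\phi_j(x)\phi_j(y)|$ and $K_{2\epsilon}(x,y)$ are not convolution kernels, and the bound $e^{-c(|x|+|y|)}\le e^{-c|x-y|}$ is precisely what converts them into one; you do say this, but it is the single step that is easy to skip and that a careful reader would want to see. Also note for the third estimate at $t\ge 1$ that the constant on the $K_1$ Gaussian bound depends on the fixed $\epsilon$, so the final constant in $\les Mf$ depends on the (fixed, $V$-dependent) choice $\epsilon<\tfrac12\sqrt{-\lambda_1}$; this is harmless but should be acknowledged.
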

This further implies the norm convergence of the solution to the initial data, meaning that $e^{-t\sqrt H}$ and $e^{-tH}$ are $C_0$ semigroups.

\begin{proposition}[Proposition \ref{BR}, Bochner--Riesz means] Assume that $V \in \K_0$ and $H=-\Delta+V$ has no zero or positive energy bound states. Then $(I - H/\lambda_0)_+^\alpha P_c$ has the same $L^p$ boundedness properties as the fractional integration operator $J_{(1-a)/2}$, where $a=\re \alpha$: $(I - H/\lambda_0)_+^\alpha P_c \in \B(L^p, L^q)$ if $\frac 1 p - \frac 1 q \geq \frac {1-a} 3$ and $-1<a<1$.

In addition, $(I - H/\lambda_0)_+^\alpha P_c \in \B(L^p)$ whenever $|\frac 1 p - \frac 1 2|<\frac a 2$, $p \in [1, \infty]$.
\end{proposition}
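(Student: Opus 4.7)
The plan is to reduce the perturbed Bochner--Riesz operator to known estimates for free fractional integration by means of an integral representation of the multiplier $\phi_\alpha(s):=(1-s/\lambda_0)_+^\alpha$ that is compatible with the Poisson kernel bounds from Proposition \ref{poisson}. Bound states first: since $\phi_\alpha$ vanishes on the negative real axis, it automatically annihilates the negative eigenspaces, so $P_c$ plays no active role beyond discarding an already-absent piece.

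I would then express
\[
\phi_\alpha(H)P_c = \int_0^\infty k_\alpha(t)\, e^{-t\sqrt H}P_c\dd t,
\]
where $k_\alpha$ is the subordination weight representing $\phi_\alpha(s)$ as a superposition of Poisson multipliers $e^{-t\sqrt s}$ in $s\ge 0$; in the free case an explicit computation identifies $k_\alpha$ with a scaled Bessel-type function satisfying $|k_\alpha(t)|\les \langle t\rangle^{-3/2-a}$ with $a=\re\alpha$. Substituting the pointwise Poisson bound $|e^{-t\sqrt H}P_c(x,y)|\les t(t^2+|x-y|^2)^{-2}$ from Proposition \ref{poisson} and carrying out the $t$-integration then yields the kernel estimate
\[
|\phi_\alpha(H)P_c(x,y)|\les \langle|x-y|\rangle^{-(2+a)},
\]
which is precisely the decay rate of the Riesz potential $J_{(1-a)/2}$ in three dimensions.

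Given this pointwise bound, the first assertion of the proposition is immediate from the Hardy--Littlewood--Sobolev inequality, since the kernel is locally bounded and decays at infinity at the stated rate. For the second assertion, I would apply Stein's analytic interpolation theorem to the family $T_z := (I-H/\lambda_0)_+^{z} P_c$ in the strip $0\le \re z\le 1+\delta$. At $\re z=0$ the spectral theorem gives $\|T_z\|_{L^2\to L^2}\les 1$ with admissible polynomial growth in $|\im z|$; at $\re z=1+\delta$ the preceding pointwise bound is absolutely integrable uniformly in either variable, yielding $L^1\to L^1$ and $L^\infty\to L^\infty$ norms. Interpolating and then letting $\delta\to 0$ produces $T_a\in\B(L^p)$ for all $|\tfrac1p-\tfrac12|<\tfrac a2$.

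The main obstacle is constructing $k_\alpha$ with sharp enough control in both $t$ and $\im\alpha$: at $s=\lambda_0$ the multiplier $\phi_\alpha$ is only H\"older continuous of order $\re\alpha$, so the naive subordination integral barely converges, and $k_\alpha$ must be analyzed with the same care as the free Bessel kernel. A cleaner alternative, which I would probably pursue in practice, is the Helffer--Sj\"ostrand functional calculus applied to an almost-analytic extension of $\phi_\alpha$, invoking the Kato-class resolvent bounds for $H$ in place of the Poisson bound; by either route, the near-cutoff contribution at scale $\lambda_0-s\sim t^{-2}$ is the most delicate piece, and it produces exactly the borderline decay $\langle|x-y|\rangle^{-(2+a)}$ required by both parts of the proposition.
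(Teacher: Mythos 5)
Your proposal has two fatal problems before any of the estimates can even be attempted.

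First, the claim that ``$\phi_\alpha$ vanishes on the negative real axis'' is false. For $s<0$ (a negative eigenvalue) and $\lambda_0>0$ we have $1-s/\lambda_0>1$, so $\phi_\alpha(s)=(1-s/\lambda_0)_+^\alpha=(1-s/\lambda_0)^\alpha\neq 0$; the multiplier vanishes only for $s>\lambda_0$. The projection $P_c$ is therefore doing real work: it removes a finite-rank piece $\sum_k(1-\lambda_k/\lambda_0)^\alpha f_k\otimes\bar f_k$ whose $\B(L^p)$ norm actually blows up as $\lambda_0\to 0$, as the paper itself points out just before the proposition. Any argument that discards $P_c$ as ``already absent'' starts from a wrong premise.

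Second, the Poisson subordination $\phi_\alpha(s)=\int_0^\infty k_\alpha(t)\,e^{-t\sqrt s}\,dt$ cannot exist. Setting $\eta=\sqrt s\ge 0$, this would write the compactly supported function $(1-\eta^2/\lambda_0)_+^\alpha$ on $[0,\sqrt{\lambda_0}]$ as a one-sided Laplace transform in $\eta$; but a Laplace transform of an integrable $k_\alpha$ is analytic in a right half-plane and can never be compactly supported. Consequently there is no weight $k_\alpha$, let alone one with the decay $\langle t\rangle^{-3/2-a}$ you assert. What does exist is a Fourier (cosine/half-wave) representation, whose weight is the Bessel kernel $J_{\alpha+3/2}(\lambda_0^{1/2}\tau)/(\lambda_0^{1/2}\tau)^{\alpha+1/2}$; this is exactly what the paper's proof uses, via the sine propagator $T(\tau)=\chi_{\tau\ge 0}\sin(\tau\sqrt H)P_c/\sqrt H$. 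The half-wave route is not optional here: it is the only transform compatible with a compactly supported multiplier, and the split into inside/outside the light cone (where outside one invokes the exact formula for $T(\tau)$ in terms of the bound states) is what controls the region where the Poisson-type estimate would have been hopeless. A Helffer--Sj\"ostrand version could in principle replace this, but would have to reprove essentially the same resolvent/light-cone information; it is not a shortcut around the need for the wave propagator and the bound-state formula.
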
 

Our new spectral results in this paper, see below, imply that all previous results from \cite{becgolschr} and \cite{becgol} hold supposing only that $V \in \K_0$ and that $H=-\Delta+V$ has no zero energy bound states (Assumption \ref{assum}), removing the need for any further spectral or regularity conditions.

About half the results in \cite{becche} and \cite{becgol3} are also true in this generality. The others also depend on certain $L^p$ bounds for the intertwining operators $H^s(-\Delta)^{-s}$ and $(-\Delta)^s H^{-s}$; see our upcoming paper for more details.

\subsection{The Kato class}
The potential $V \in \K$ is a short-range potential of finite Kato norm
$$
\|V\|_\K := \sup_{y \in \R^3} \int_{\R^3} \frac{|V(x)| \dd x}{|x-y|} < \infty,
$$
as defined in \cite{rodsch} and \cite{golsch}. The Kato norm is translation-invariant and has the same scaling invariance as $-\Delta$:
$$
\|V(\alpha x)\|_{\K_x} = \alpha^{-2} \|V\|_\K.
$$
Other scaling-invariant norms for $-\Delta+V$ are e.g.\ those of $L^{3/2}$ or in general $L^{3/2, q}$, see \cite{bergh} about the Lorentz spaces $L^{p, q}$. Note that $L^{3/2, 1} \subset \K$ and $L^{3/2, 1} \subset L^{3/2, q}$, but $L^{3/2, q} \not \subset \K$ for $q > 1$.

Strichartz estimates and uniform resolvent bounds hold when $V \in L^{3/2, \infty}$, but many other estimates require $V \in \mc K$. For $V$ of less than quadratic decay, estimates are no longer uniform on the whole continuous spectrum, only away from the threshold.

%There are differences between the behavior of $H=-\Delta+V$ when $V \in L^{3/2, 1}$ and when $V \in L^{3/2, p}$ for $p > 1$. It is natural to consider $V \in L^{3/2, 1}$ and, in general, $V \in \K_0$.

The Kato class $\K$ includes some singular continuous measures (of locally bounded variation) $V \in \M_{loc}$, if we interpret its definition as
$$
\|V\|_{\K} = \sup_{y \in \R^3} \int_{\R^3} \frac {d|V(x)|}{|x-y|} < \infty.
$$

Any such measures must be continuous, but need not be absolutely continuous. Clearly, the Kato norm of a probability measure supported on a set is a measure of the capacity of that set. By (\ref{frostman}), $V$ is a Frostman measure of dimension at least $1$, meaning that the Hausdorff dimension of its support is at least $1$. Neither decay at infinity, see \cite{bec3} (some sort of lacunarity is required instead), nor local integrability, see \cite{goldberg}, is always necessary. Some results hold in general when $V \in \K$.

Nevertheless, in this paper we assume that $V \in \K_0$, so it decays in norm at infinity and it is locally integrable, hence its support has full Hausdorff dimension.

More precisely, as for $VMO \subset BMO$, $\K_0$ consists of those measures in $\mc K$ having three additional properties: local integrability and

\begin{definition}[The local Kato property]
$$
\lim_{\epsilon \to 0} \sup_{y \in \R^3} \int_{|x-y|<\epsilon} \frac{|V(x)| \dd x}{|x-y|} = 0.
$$
\end{definition}

\begin{definition}[The modified distal Kato property]
$$
\lim_{R \to \infty} \sup_{y \in \R^3} \int_{|x|>R} \frac{|V(x)| \dd x}{|x-y|} = \lim_{R \to \infty} \|\chi_{|x|>R} V(x)\|_\K = 0.
$$
\end{definition}

Despite appearances, the modified distal Kato property is translation-invariant. For comparison, we also state the distal Kato property:
\begin{definition}[The distal Kato property]
$$
\lim_{R \to \infty} \sup_{y \in \R^3} \int_{|x-y|>R} \frac{|V(x)| \dd x}{|x-y|} = 0.
$$
\end{definition}

One property implies the other:
\begin{lemma}[Lemma \ref{modkato}] For $V \in \K$, the modified distal Kato property implies the distal Kato property.
\end{lemma}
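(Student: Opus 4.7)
The plan is to control the distal integral by splitting the region of integration at a scale $S = S(R)$ satisfying $1 \ll S \ll R$. Fix $R > 0$ and $y \in \R^3$, set $S = \sqrt{R}$, and decompose
$$
\int_{|x-y|>R} \frac{|V(x)|\dd x}{|x-y|} = I_1(R,y) + I_2(R,y),
$$
where $I_1$ integrates over $\{|x-y|>R\} \cap \{|x|>S\}$ and $I_2$ over $\{|x-y|>R\} \cap \{|x| \leq S\}$.

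For $I_1$, I would simply discard the constraint $|x-y|>R$ and invoke the modified distal Kato hypothesis directly:
$$
I_1(R,y) \leq \int_{|x|>\sqrt R} \frac{|V(x)|\dd x}{|x-y|} \leq \sup_{z \in \R^3} \int_{|x|>\sqrt R} \frac{|V(x)|\dd x}{|x-z|},
$$
and the right-hand side tends to $0$ as $R \to \infty$, uniformly in $y$.

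For $I_2$, I would exploit the constraint $|x-y|>R$ to replace the denominator by $R$, then trade a factor of $S = \sqrt R$ for the Kato norm via the elementary bound
$$
\int_{|x| \leq S} |V(x)|\dd x \leq S \int_{|x| \leq S} \frac{|V(x)|\dd x}{|x|} \leq S \|V\|_\K,
$$
which holds because $|x| \leq S$ implies $1 \leq S/|x|$. This yields
$$
I_2(R,y) \leq \frac{1}{R} \int_{|x| \leq \sqrt R} |V(x)|\dd x \leq \frac{\|V\|_\K}{\sqrt R} \to 0.
$$
Taking the supremum in $y$ and letting $R \to \infty$ produces the distal Kato property.

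The one point calling for attention is the choice of the intermediate scale $S(R)$: one needs $S(R) \to \infty$ so that the modified distal Kato hypothesis can be invoked on $\{|x|>S(R)\}$, and simultaneously $S(R)/R \to 0$ so that the bulk contribution $I_2$, controlled by a constant times $S(R)/R$, tends to zero. The geometric mean $S = \sqrt R$ is the cleanest choice, but any $S = R^{\alpha}$ with $0 < \alpha < 1$ would serve equally well; there is no deeper obstacle.
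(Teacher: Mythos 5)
Your proof is correct, and it takes a genuinely different route from the paper's. You partition the domain of $x$ into $\{|x|>S\}$ and $\{|x|\leq S\}$ for a well-chosen intermediate scale $S=\sqrt R$; on the first piece you discard the constraint $|x-y|>R$ and invoke the modified distal Kato hypothesis directly, while on the second you use $|x-y|>R$ only to bound the integrand by $1/R$ and then trade the mass $\int_{|x|\leq S}|V|\,dx$ for $S\|V\|_\K$ via the Frostman bound (the paper's inequality (\ref{frostman})). The paper instead makes a case distinction on the size of $|y|$: for $|y|$ large it controls the \emph{full} Kato integral (splitting into a compact piece, handled by dominated convergence, and the tail), and for $|y|$ bounded it uses the triangle inequality to show that $|x-y|>R$ forces $|x|>R_0$, after which the modified distal hypothesis applies. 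Your argument avoids the case split on $y$ entirely and yields a quantitative rate of decay, $O(R^{-1/2})$ plus the modified-distal modulus at scale $\sqrt R$, whereas the paper's proof is purely qualitative (it passes through dominated convergence). The paper's version, on the other hand, packages the small-$|x|$ contribution without needing the Frostman bound as a separate ingredient. Both are sound; yours is cleaner to run uniformly in $y$, which you correctly exploit when taking the supremum at the end.
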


The converse is not true: for example, a potential made of identical bumps situated at dyadic (or even polynomially increasing) distances from the origin satisfies the distal Kato condition, but not the modified one.

Many of the results in this paper hold assuming only the local and modified distal Kato properties. However, for convenience, we also assume local integrability. The case when $V$ is a singular measure will be treated elsewhere.

By means of local integrability and the local and modified distal Kato properties, we completely characterize $\K_0$.

\begin{lemma}[Lemma \ref{charkato}] For a measure $V \in \K$, $V \in \K_0$ if and only if $V$ has the local and modified distal Kato properties and $V \in L^1_{loc}$.
\end{lemma}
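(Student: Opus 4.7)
The plan is to prove both implications. For the forward direction, suppose $V \in \K_0$ and pick $\phi_n \in \D$ with $\|V - \phi_n\|_\K \to 0$. Every test function is trivially in $L^1_{loc}$ and satisfies both Kato properties (its support being compact), so it suffices to show these three properties are stable under Kato-norm limits. For local integrability, fix a compact $K$ and a point $y_0$ with $D := \sup_{x \in K}|x - y_0| < \infty$; then
$$|V - \phi_n|(K) \leq D \int_K \frac{d|V - \phi_n|(x)}{|x - y_0|} \leq D \|V - \phi_n\|_\K \to 0,$$
so $V$ restricted to $K$ is a total-variation limit of absolutely continuous measures, hence itself absolutely continuous with $L^1$ density. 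For the local and modified distal Kato properties, given $\eta > 0$ I pick $n$ with $\|V - \phi_n\|_\K < \eta/2$; then for $\epsilon$ small (resp.\ $R$ large), the corresponding local (resp.\ distal) supremum for $\phi_n$ is below $\eta/2$, and the triangle inequality closes the estimate.

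For the reverse direction, I approximate $V$ by test functions in two steps. First, truncate: set $V_R := V \chi_{|x| \leq R}$. The modified distal Kato property of $V$ states precisely that $\|V - V_R\|_\K \to 0$ as $R \to \infty$. Next, mollify: since $V_R \in L^1$ with compact support (using $V \in L^1_{loc}$), $V_{R,\delta} := V_R * \rho_\delta$ lies in $\D$ for any standard mollifier $\rho_\delta$. The core of the argument is to show $\|V_R - V_{R,\delta}\|_\K \to 0$ as $\delta \to 0$.

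For this convergence, I first observe that mollification preserves the local Kato mass on scale $\epsilon$: by Fubini and a change of variables,
$$\sup_y \int_{|x-y|<\epsilon} \frac{|V_{R,\delta}(x)|}{|x-y|}\,dx \leq \int \rho_\delta(z) \sup_{y'} \int_{|w-y'|<\epsilon} \frac{|V_R(w)|}{|w-y'|}\,dw\, dz = \sup_{y'} \int_{|w-y'|<\epsilon} \frac{|V_R(w)|}{|w-y'|}\,dw.$$
Given $\eta > 0$, the local Kato property of $V$ (inherited by $V_R$ and, via the display above, by $V_{R,\delta}$ uniformly in $\delta$) lets me choose $\epsilon > 0$ so each such supremum is at most $\eta/4$. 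I then split the Kato integral at radius $\epsilon$:
$$\|V_R - V_{R,\delta}\|_\K \leq \sup_y \int_{|x-y|<\epsilon} \frac{|V_R - V_{R,\delta}|(x)}{|x-y|}\, dx + \epsilon^{-1}\|V_R - V_{R,\delta}\|_{L^1}.$$
The first term is bounded by $\eta/2$ by the triangle inequality and the previous display; the second tends to $0$ as $\delta \to 0$ by standard $L^1$ mollifier theory. The main obstacle I expect is precisely this uniform-in-$\delta$ control of the local Kato mass under mollification, which is what makes the local Kato hypothesis essential: without it, the singular weight $1/|x-y|$ near the diagonal is not tamed by $L^1$ convergence alone, and the approximation would fail even for $V \in L^1_{loc} \cap \K$ with all distal behavior under control.
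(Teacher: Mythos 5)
Your proof is correct and follows essentially the same route as the paper's: both directions hinge on the stability of local integrability and the two Kato properties under $\K$-limits for the forward implication, and on truncation followed by mollification for the reverse, with the local Kato property taming the near-diagonal singularity so that $L^1$ convergence of the far part suffices. The only cosmetic difference is that the paper phrases the key mollification estimate through translation-continuity of $V$ in $\K$, whereas you derive the same bound directly via the Fubini observation that convolution with a probability density does not increase the local Kato mass at any fixed scale.
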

A more detailed discussion and the proof follow in Appendix \ref{a}.

Finally, if $L^{3/2, 1} \subset \K$, then by duality $\K^* \subset L^{3, \infty}$. Furthermore, since $\K$ is a lattice, so is $\K^*$. A more precise characterization, stated without proof in \cite{becgol}, is as follows:
\begin{proposition}[Proposition \ref{katodual}] $g \in \K^*$ if and only if there exists a positive measure $\mu$, with $\|\mu\|_\M=\mu(\R^3)=\|g\|_{K^*}$, such that $|g| \leq (-\Delta)^{-1} \mu$ almost everywhere.
\end{proposition}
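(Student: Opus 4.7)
The plan is to prove the two directions separately, with the nontrivial one resting on the M.\ Riesz cone form of the Hahn--Banach theorem.

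The easy direction ($\Leftarrow$) is a direct computation. Assuming $|g| \leq (-\Delta)^{-1}\mu$ pointwise a.e., for any test function $V$ one has by Fubini and the symmetry of the Newton kernel
$$
\left|\int gV\dd x\right| \leq \int |V|\,(-\Delta)^{-1}\mu\dd x = \int (-\Delta)^{-1}|V|\dd\mu \leq \|(-\Delta)^{-1}|V|\|_\infty \|\mu\|_\M,
$$
and $\|(-\Delta)^{-1}|V|\|_\infty$ coincides with $\|V\|_\K$ up to the $4\pi$ in the Newton kernel, which is the convention the paper absorbs. Density of $\D$ in $\K_0$ (or a direct Fubini argument for $V$ a measure) upgrades this to all $V \in \K$, giving $\|g\|_{\K^*} \leq \|\mu\|_\M$ in the paper's normalization.

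For the converse ($\Rightarrow$), I would treat $g$ as real-valued (the complex case splits into real and imaginary parts, or uses the complex Hahn--Banach theorem directly) and work with the convex cone
$$
\mc C = \{(-\Delta)^{-1}V : V \in \D,\ V \geq 0\} \subset C_0(\R^3),
$$
whose elements are continuous and decay like $|x|^{-1}$ at infinity. Define $\Lambda : \mc C \to \R$ by $\Lambda\bigl((-\Delta)^{-1}V\bigr) := \int gV\dd x$; this is well defined and additive/positively homogeneous because $(-\Delta)^{-1}$ is linear and injective on $\D$. For $V \geq 0$, the Kato norm identity gives $\|V\|_\K = 4\pi\|(-\Delta)^{-1}V\|_\infty$, so
$$
\Lambda(u) \leq 4\pi\|g\|_{\K^*}\|u\|_\infty, \qquad u \in \mc C.
$$
Invoking the M.\ Riesz extension theorem with sublinear majorant $p(u) = 4\pi\|g\|_{\K^*}\|u\|_\infty$ produces a linear extension $\tilde\Lambda$ to all of $C_0$ still dominated by $p$; since $p$ is symmetric, $|\tilde\Lambda(u)| \leq p(u)$. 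By Riesz representation, $\tilde\Lambda(u) = \int u\dd\nu$ for a signed Radon measure $\nu$ with $\|\nu\|_\M \leq 4\pi\|g\|_{\K^*}$. Testing against $V \in \D_+$ and using Fubini,
$$
\int gV\dd x = \int (-\Delta)^{-1}V\dd\nu = \int V\,(-\Delta)^{-1}\nu\dd x,
$$
and writing an arbitrary $V \in \D$ as $V = (V+\chi) - \chi$ with $\chi \in \D_+$ large enough, one obtains the same identity for all $V \in \D$. Hence $g = (-\Delta)^{-1}\nu$ a.e., so $\mu := |\nu|$ is nonnegative and satisfies $|g| \leq (-\Delta)^{-1}\mu$ with $\|\mu\|_\M \leq 4\pi\|g\|_{\K^*}$. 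Combining with the easy direction forces $\|\mu\|_\M = \|g\|_{\K^*}$ in the paper's normalization.

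The main obstacle is organizing the Hahn--Banach step correctly. One cannot simply extend $\Lambda$ linearly to $\{(-\Delta)^{-1}V : V \in \D\}$ and estimate by $\|\cdot\|_\infty$, because $\K^*$ control on $\int gV$ only gives a bound by $\|(-\Delta)^{-1}|V|\|_\infty$, which can be arbitrarily larger than $\|(-\Delta)^{-1}V\|_\infty$ for signed $V$. Working on the positive cone, where the two coincide, and applying the cone version of Hahn--Banach sidesteps this difficulty and simultaneously produces the dominated extension and the measure $\mu = |\nu|$ witnessing the pointwise bound.
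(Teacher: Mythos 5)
Your easy direction is fine, but the Hahn--Banach step in the converse contains a genuine error, and the conclusion it would yield is actually false.

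The M.\;Riesz extension theorem (and its relatives: Mazur--Orlicz, the sandwich theorem) does \emph{not} produce a linear functional on $C_0$ that \emph{agrees} with $\Lambda$ on the cone $\mc C$ and is dominated by $p$; it produces a linear functional $\tilde\Lambda \leq p$ that merely \emph{dominates} $\Lambda$ on $\mc C$. The distinction is essential here. Any linear functional that agrees with $\Lambda$ on the cone $\mc C$ must, by additivity, agree with $\Lambda$ on the whole subspace $Y = \mc C - \mc C = \{(-\Delta)^{-1}V : V\in\D\}$, and your own cautionary final paragraph explains exactly why no such functional can be bounded by $C\|\cdot\|_\infty$ on $Y$: for signed $V$ the bound $|\int gV| \leq C\|(-\Delta)^{-1}V\|_\infty$ fails. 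Concretely, take $g = \chi_{B(0,1)}\in\K^*$. Your argument concludes $g = (-\Delta)^{-1}\nu$ a.e.\ for some $\nu\in\M$, i.e.\ $-\Delta g=\nu$ is a finite measure. But $-\Delta\chi_{B(0,1)}$ is the normal derivative of surface measure on $\partial B(0,1)$, a distribution of order $1$, not a measure; equivalently, choosing $u$ smooth, compactly supported, vanishing on $\partial B(0,1)$, with unit normal derivative there and $\|u\|_\infty$ arbitrarily small, one makes $\int g(-\Delta u)$ bounded below while $\|(-\Delta)^{-1}(-\Delta u)\|_\infty = \|u\|_\infty \to 0$. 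So the ``dominated extension'' does not exist, and $g=(-\Delta)^{-1}\nu$ is simply not attainable.

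The argument is salvageable, but only by changing the conclusion from equality to inequality, which is all the proposition asks for. First reduce to $g\geq 0$ (since $\K$ is a lattice, $\||g|\|_{\K^*}=\|g\|_{\K^*}$); you only said ``real-valued,'' but nonnegativity is used below. Then apply Mazur--Orlicz (or the sandwich theorem) with the constraints $f(u)\geq\Lambda(u)$ for $u\in\mc C$ and majorant $p$: the consistency hypothesis $\sum\lambda_i\Lambda(u_i)\leq p(\sum\lambda_i u_i)$ holds because $\mc C$ is a cone and $\Lambda\leq p$ on it. This yields $f\leq p$ on $C_0$ (hence $f=\int\cdot\,d\nu$ with $\|\nu\|_\M\leq\|g\|_{\K^*}$ in the paper's normalization) and $f\geq\Lambda$ on $\mc C$, i.e.\ $\int V(-\Delta)^{-1}\nu\,dx\geq\int gV\,dx$ for all $V\in\D_+$, giving $(-\Delta)^{-1}\nu\geq g$ a.e.\ but \emph{not} equality. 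Since $g\geq 0$, setting $\mu=\nu_+$ gives $(-\Delta)^{-1}\mu\geq(-\Delta)^{-1}\nu\geq g=|g|$ with $\mu\geq 0$ and $\|\mu\|_\M\leq\|g\|_{\K^*}$; the reverse inequality follows from your easy direction. Once repaired, this is a genuinely different route from the paper, which constructs a superharmonic majorant $g^\#=\sup_{\mu\in M}(\mu*g)$ via a maximal-function argument and then invokes the Riesz decomposition of superharmonic functions; your functional-analytic route is shorter and avoids the superharmonicity verification, at the price of being less explicit.
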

We also prove this characterization in Appendix \ref{a}.

\subsection{Spectral properties of the Hamiltonian. Bound states}

If $V \in \K$, then $\sigma_c(H) = \sigma_{ac}(H) = [0, \infty)$. However, in this paper, $V$ can be negative. A major difference when $V$ is large and negative is the possible presence of bound states.

The continuous and point spectrum have to be treated separately. Under our assumptions the point spectrum bounds are trivial to prove (or impossible, in the case of dispersive estimates), while the continuous spectrum bounds are non-trivial, but can be proved under Assumption \ref{assum}.

All previous results in \cite{becgolschr}, \cite{becgol}, to some extent in \cite{becche} and \cite{becgol3}, and the current ones depend exclusively on the following facts about $H=-\Delta+V$ and its bound states:

\begin{enum}
\item The spectrum of $H$ consists of finitely many negative eigenvalues $\lambda_k$, for which the corresponding eigenfunctions $f_k$ decay exponentially:
$$
f_k(x) \les \frac {e^{-\sqrt{-\lambda_k}|x|}}{\langle x \rangle}
$$

\item In particular, $H$ has no bound states at zero energy, so  $I+VR_0(\lambda)$ is uniformly invertible along $\sigma_c(H)=[0, \infty)$. 

\item Conditions for Wiener's theorem: Let $S(t)(x, y)= \chi_{t \geq 0}\frac 1 {4 \pi t} V(x) \delta_t(|x-y|)$. Then
$$
\lim_{R \to \infty} \|\chi_{\rho \geq R} S(\rho)\|_{L^\infty_y \M_x L^1_\rho} = 0 \text{ (decay at infinity)}
$$
and for some $N \geq 1$
$$
\lim_{\epsilon \to 0} \|S^N(\rho+\epsilon) - S^N(\rho)\|_{L^\infty_y \M_x L^1_\rho} = 0 \text{ (continuity under translation)},
$$
where powers correspond to repeated convolutions.
\end{enum}

The first two conditions are rather natural spectral assumptions. Except for the absence of zero energy bound states, they hold whenever $V \in \D$.

Zero energy bound states are absent generically, as shown below, but not for all $V \in \D$. The presence of zero energy bound states changes the analysis and leads to different decay rates in decay estimates, due to a more complicated interaction with the continuous spectrum; see \cite{ersc}, \cite{yaj}, \cite{zero}, and \cite{bec2}, among others, for more details. In this paper we confine ourselves to the generic case, so we have to specifically impose Assumption \ref{assum}.

In Appendix \ref{b} we prove that the first two conditions are true (absent zero energy bound states) under the mere assumption that $V \in \K_0$ or less than that, since local integrability is not needed:
\begin{proposition}[Proposition \ref{finite}] If $V \in \K_0$ is real-valued (self-adjoint), then $H$ has the same number of negative eigenvalues as the number of (real) eigenvalues of $V (-\Delta)^{-1} \in \B(\M) \cap \B(\K)$ in $(-\infty, -1)$, which is finite.
\end{proposition}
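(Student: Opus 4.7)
The plan is to reduce the count to Birman--Schwinger on $L^2$ and then transfer the eigenvalue data back to $T := V(-\Delta)^{-1}$ on $\M$. I would proceed in three steps: show $T$ is compact on $\M$ (and on $\K$), identify its nonzero spectrum with that of the self-adjoint operator $K_0 := (-\Delta)^{-1/2} V (-\Delta)^{-1/2}$ on $L^2$, and then run a continuity-in-parameter argument to match eigenvalues of $K_0$ strictly below $-1$ with negative eigenvalues of $H$.

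Compactness of $T$ on $\M$ follows from the density of test functions in $\K_0$. If $V_n \in C^\infty_c$ with $\|V - V_n\|_\K \to 0$, then the Fubini bound $\|V(-\Delta)^{-1}\|_{\M \to \M} \les \|V\|_\K$ shows $V_n(-\Delta)^{-1} \to T$ in operator norm, so it suffices to check that each $V_n(-\Delta)^{-1}$ is compact on $\M$. Its image sits in absolutely continuous measures supported in $\supp V_n$, with densities controlled through the $1/|x-\cdot|$ kernel, so precompactness follows either from a Kolmogorov--Riesz argument after mollification or by factoring through $L^2$ and invoking Rellich--Kondrachov; the same reasoning gives compactness on $\K$.

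For the spectrum, factor $V = (U|V|^{1/2}) \cdot |V|^{1/2}$ with $U = \sgn V$ and apply the general fact that $AB$ and $BA$ share the same nonzero spectrum: this first identifies the nonzero spectrum of $T$ on $\M$ with that of $|V|^{1/2}(-\Delta)^{-1} U|V|^{1/2}$ on $L^2$, and then, after splitting $(-\Delta)^{-1} = (-\Delta)^{-1/2}(-\Delta)^{-1/2}$ and swapping factors again, with that of the self-adjoint $K_0$ on $L^2$; in particular the nonzero spectrum of $T$ is real. Compactness of $K_0$ follows from the same $\K_0$ density argument, so its spectrum outside $0$ is purely discrete with only finitely many eigenvalues in $(-\infty, -1)$.

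Finally, to count: set $K_0(\lambda) := (-\Delta + \lambda)^{-1/2} V (-\Delta + \lambda)^{-1/2}$, so that $H\psi = -\lambda\psi$ becomes $K_0(\lambda)\phi = -\phi$ via $\phi = (-\Delta + \lambda)^{1/2}\psi$. The min-max formula
$$
\mu_k(\lambda) = \inf_{\dim W = k} \sup_{\psi \in W} \frac{\int V|\psi|^2 \dd x}{\int (|\nabla\psi|^2 + \lambda|\psi|^2) \dd x}
$$
for the $k$-th most negative eigenvalue (with $W$ restricted to subspaces on which the numerator is negative) shows $\mu_k(\lambda)$ is continuous and monotonically increasing in $\lambda$, tending to $0$ as $\lambda \to \infty$; hence $\mu_k(\lambda) = -1$ for some $\lambda > 0$ iff $\mu_k(0) < -1$, and Assumption \ref{assum} excludes the boundary case $\mu_k(0) = -1$, yielding a multiplicity-preserving bijection that completes the count. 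The main obstacle I expect is the bookkeeping in the two spectral coincidences, where three function spaces ($\M$, $L^2$, and the intermediate weighted $L^2(|V|)$ on which $|V|^{1/2}$ naturally acts) must be woven together and each intermediate composition shown to be genuinely bounded via $\K$-class estimates; a secondary concern is ensuring the min-max for negative eigenvalues of a sign-indefinite quadratic form is correctly set up.
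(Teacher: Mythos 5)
Your proposal takes a genuinely different route from the paper. You run a classical Birman--Schwinger argument: factor through $K_0(\lambda) = (-\Delta+\lambda)^{-1/2}V(-\Delta+\lambda)^{-1/2}$ on $L^2$, identify its nonzero spectrum with that of $V(-\Delta)^{-1}$ on $\M$, and then use the variational (min-max) characterization to show that each eigenvalue curve $\mu_k(\lambda)$ is continuous, strictly increasing in $\lambda$, and tends to $0$, so that crossings of the level $-1$ are in bijection with eigenvalues of $K_0(0)$ below $-1$. The paper instead stays entirely in $\B(\M)$ and runs an operator-valued Rouch\'e argument (Gohberg--Sigal) on the two-parameter family $A_t(\lambda)=I+tVR_0(\lambda)$ along the contour $\mc C = \partial B(-R_0,R_0)$: it shows that zeros can only appear or disappear on $\mc C$ at $\lambda=0$, that this happens only for discretely many $t$, and then performs a delicate expansion of $P_0 A_t(\lambda) P_0$ (with separate treatment of the canonical resonance component $Q_1$ and the eigenfunction component $Q_2$, and with error control via estimates (\ref{app1})--(\ref{app2})) to establish that exactly $N = \dim\ker A_{t_n}(0)$ new negative eigenvalues emerge at each transition point. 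Your approach is more elementary and shorter, but leans essentially on self-adjointness (the min-max formula); the paper's argument is considerably longer but is robust in a way that serves the authors' stated goal of eventually handling non-self-adjoint (complex or matrix) potentials, and as a byproduct it shows that $H_t=-\Delta+tV$ has zero-energy bound states only for a discrete set of $t$, a structural fact the paper explicitly advertises. Both proofs must confront compactness of $V(-\Delta)^{-1}$ on $\M$; your sketch (``Kolmogorov--Riesz after mollification or factor through $L^2$ and invoke Rellich--Kondrachov'') is too loose for the measure space $\M$, where compactness requires the Arzel\`a--Ascoli / $C_0$-adjoint machinery the paper develops in Appendix \ref{a}. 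Similarly, the $AB$ vs.\ $BA$ spectral coincidence needs the embedding $\M\cap\K\subset\dot H^{-1}$ (Lemma in Appendix \ref{a}) to verify that the intermediate compositions are genuinely bounded --- you flag this but do not resolve it.

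One concrete error: you invoke Assumption \ref{assum} to ``exclude the boundary case $\mu_k(0)=-1$,'' but this proposition is proved \emph{without} that assumption --- the paper explicitly notes ``Here we allow zero energy bound states, unlike in the rest of the paper.'' The good news is that you do not actually need it: strict monotonicity of $\mu_k(\lambda)$ (which follows from the Feynman--Hellmann computation $\mu_k'(\lambda) = -\langle V\psi_k,\psi_k\rangle\,\|\psi_k\|_{L^2}^2 / \langle(-\Delta+\lambda)\psi_k,\psi_k\rangle^2 > 0$) already shows that if $\mu_k(0)=-1$ then $\mu_k(\lambda)>-1$ for all $\lambda>0$, producing no negative eigenvalue of $H$, while $-1\notin(-\infty,-1)$, so both sides of the count are zero. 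Strike the appeal to Assumption \ref{assum} and replace it with this monotonicity observation, and fill in the compactness and spectral-coincidence details by citing the paper's Appendix \ref{a} machinery, and your proof goes through.
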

In the course of the proof in Appendix \ref{b}, we also show that $H_t=-\Delta+tV$ only has zero energy bound states for discretely many values of $t$, providing one sense in which zero energy bound states are non-generic.

\begin{proposition}[Proposition \ref{no_pos}] If $V \in \K_0$ is real-valued (self-adjoint), then $H=-\Delta+V$ has no embedded resonances in $(0, \infty)$ and embedded eigenvalues in $(0, \infty)$ are discrete. If in addition $V \in L^{3/2, \infty}$ or $V \in \dot W^{-1/4, 4/3}$, then there are no embedded eigenvalues.
\end{proposition}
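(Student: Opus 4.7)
The plan is to split the statement into a Fredholm-type discreteness claim and a radiation-condition argument ruling out pure resonances, followed by a citation of \cite{golsch} and \cite{kota} for the eigenvalue part. Throughout, I interpret an embedded resonance at energy $\mu^2 > 0$ as a nonzero $f \in \K^*$ solving $f + R_0^\pm(\mu^2)(Vf) = 0$, where $R_0^\pm(\mu^2)$ are the boundary values of the free resolvent with kernel $\frac{e^{\pm i\mu|x-y|}}{4\pi|x-y|}$; this becomes an eigenvalue precisely when $f \in L^2$, and equivalently $\phi = Vf \in \M$ lies in $\ker(I + V R_0^\pm(\mu^2))$.

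For discreteness, I would show that $V R_0^\pm(\mu^2) \in \B(\M)$ is compact for each $\mu > 0$ and depends norm-continuously on $\mu \in (0, \infty)$. Compactness uses the $\K_0$ hypothesis: approximate $V$ by $V_n \in \D$, for which the integral operator $V_n R_0^\pm(\mu^2)$ has a smooth kernel with compact support in $x$ and hence is compact on $\M$; letting $n \to \infty$ and using $\|V - V_n\|_\K \to 0$ together with the uniform bound $\|R_0^\pm(\mu^2)\|_{\M \to \K^*} \les 1$ (read off from the explicit kernel) transfers compactness to $V R_0^\pm(\mu^2)$. Continuity in $\mu$ is a dominated-convergence argument on the same kernel. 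The analytic Fredholm theorem applied to the holomorphic family $\mu \mapsto I + V R_0^\pm(\mu^2)$ then yields discreteness of the set of $\mu > 0$ on which it fails to be invertible, which is exactly the set of positive embedded eigenvalues/resonances.

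To rule out pure resonances, given $f \in \K^*$ with $f = -R_0^+(\mu^2)(Vf)$ and $Vf \in \M$, the explicit Helmholtz kernel yields the far-field expansion
$$
f(x) = -\frac{e^{i\mu|x|}}{4\pi|x|} \widehat{Vf}(\mu \hat x) + o(|x|^{-1}) \text{ as } |x| \to \infty,
$$
so $f$ obeys the outgoing Sommerfeld radiation condition. Integrating $0 = \im \langle (H - \mu^2) f, f \rangle_{B_R}$ and using that the potential term is real (since $V$ is real-valued), the boundary flux from $-\Delta$ must vanish as $R \to \infty$, giving
$$
\mu \int_{S^2} |\widehat{Vf}(\mu \hat x)|^2 \dd\sigma(\hat x) = 0.
$$
Hence $\widehat{Vf}$ vanishes on the sphere $|\xi| = \mu$; inserting this back into the integral representation and bootstrapping gains $\langle x \rangle$-decay on $f$ until $f \in L^2$, so every purported resonance is in fact an embedded eigenvalue. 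The main obstacle here is justifying the Sommerfeld/orthogonality argument at the low regularity $\K^* \hookrightarrow L^{3, \infty}$, where $f$ has only marginal pointwise decay a priori; I would overcome this by working exclusively with the representation $f = -R_0^+(\mu^2)(Vf)$ paired against the finite measure $Vf \in \M$, reducing both the far-field expansion and the sphere restriction of $\widehat{Vf}$ to concrete stationary-phase estimates on the explicit kernel.

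Finally, under the additional hypothesis $V \in L^{3/2, \infty}$ or $V \in \dot W^{-1/4, 4/3}$, I would simply invoke the absence of positive embedded eigenvalues established in \cite{golsch} and \cite{kota} respectively; combined with the reduction of resonances to eigenvalues above, this eliminates both embedded eigenvalues and resonances in $(0, \infty)$.
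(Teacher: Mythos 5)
Your reduction of resonances to eigenvalues (Agmon pairing $\Im\langle R_0^+(\mu^2)Vf, Vf\rangle = 0$ giving $\widehat{Vf}=0$ on $|\xi|=\mu$, then a decay bootstrap to reach $L^2$, then citing \cite{kota} and \cite{golsch}) matches the paper's route in both idea and execution, and your instinct to avoid integration by parts by working directly with the resolvent representation against $Vf \in \M$ is exactly what the paper does via self-adjointness of $V$ and the formula \eqref{temp}.

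The genuine gap is in the discreteness step. Norm continuity in $\mu$ plus compactness of $V R_0^\pm(\mu^2)$ in $\B(\M)$ do \emph{not} by themselves yield a discrete failure set: for a rank-one family $K(\mu) = -\alpha(\mu) P$ with $\alpha$ merely continuous, $I+K(\mu)$ is singular precisely on $\{\alpha(\mu)=1\}$, which can be any closed set. What the analytic Fredholm theorem actually needs is analyticity on an \emph{open} connected subset of $\C$, and for general $V \in \K_0$ the boundary family $\eta \mapsto V R_0(\eta^2)$ is not analytic (nor even real-analytic) in a neighborhood of $(0,\infty)$: the $n$-th Taylor coefficient has kernel $\sim V(x)|x-y|^{n-1}$, which is not $\B(\M)$-bounded once $n \geq 2$ unless $V$ has $n$-th moment decay. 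The paper's preliminaries make this explicit: $\partial_\eta A(\eta^2)$ lands only in $\B(\M,\K)$, and $\partial_\eta^k A(\eta^2)$ lands in progressively more weighted spaces. So an appeal to "analytic Fredholm" on $(0,\infty)$ has no force here. The paper instead argues isolation of boundary zeros hands-on: $\ker A_1(\eta_0^2)$ is finite-dimensional by compactness, Fechbach's Lemma \ref{fechbach} reduces a putative accumulating sequence $\eta_k \to \eta_0$ with kernel vectors $f_k$ to a statement on this finite-dimensional space, compactness of its unit sphere yields $f_k \to f_0 \ne 0$, and the \emph{first-order} Taylor expansion alone forces $P_0 V e^{i\eta_0|x-y|} f_0 = 0$, which via the Agmon pairing and Lemma \ref{more_decay} gives $g_0 \in L^2$ with $\|g_0\|_{L^2}^2 = \langle f_0, R_0(\lambda_0+i0)^2 f_0\rangle = 0$ --- a contradiction. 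This uses only one $\eta$-derivative at the boundary, which is exactly what the Kato-class regularity affords. To fix your argument you would need either to restrict to $V$ with compact support or exponential decay (where the $\eta$-family really is analytic in a strip) or to adopt a finite-dimensional reduction of the paper's type. A minor point on attribution: \cite{golsch} is cited for the regularity gain that converts a distorted plane wave with $\widehat{Vf}$ vanishing on the sphere into an $L^2$ eigenfunction (Corollary 13 there), while the absence of embedded eigenvalues under $V \in L^{3/2,\infty}$ or $V \in \dot W^{-1/4,4/3}$ is due to \cite{ioje} and \cite{kota}.
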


Thus, we need at least one extra condition, such as $V \in L^{3/2, \infty}$ (or even $V \in \dot W^{-1/4, 4/3}$), under which the absence of embedded eigenvalues is known by the work of \cite{ioje} and \cite{kota}.

%This makes $\K_0$ a rather wide class (the widest so far) of short-range scalar potentials for which $H=-\Delta+V$ has the same spectral properties as when $V \in \D$.

If the zero energy bound states have enough decay --- if they are in $L^1$, see \cite{zero} and \cite{bec2} ---, then they decouple from the continuous spectrum. In general, though, in three dimensions, zero energy resonances are of size $\langle x \rangle^{-1}$ and zero energy $L^2$ eigenfunctions can be of size $\langle x \rangle^{-n}$, for $n \geq 2$. There can be up to $13=1+3+3^2$ obstructions to decoupling, see \cite{bec2}, which make Assumption \ref{assum} necessary.

Finally, the conditions for Wiener's theorem are stable under passing to the limit in $\K$ (so the sets of $V \in \K$ for which each condition is true are closed in $\K$), so they are true for any $V \in \K_0$ because they are true for any $V \in \D$:
\begin{proposition}[Proposition \ref{cond_wiener}] Consider $V \in \K$ and let $S(t)(x, y)= \chi_{t \geq 0}\frac 1 {4 \pi t} V(x)$ $\delta_t(|x-y|)$. If $V$ fulfills the distal Kato condition, then $S$ fulfills the decay at infinity condition:
$$
\lim_{R \to \infty} \|\chi_{\rho \geq R} S(\rho)\|_{L^\infty_y \M_x L^1_\rho} = 0.
$$
If $V \in \K_0$, then $S^N$ fulfills the continuity under translation condition for $N=2$:
$$
\lim_{\epsilon \to 0} \|S^2(\rho+\epsilon) - S^2(\rho)\|_{L^\infty_y \M_x L^1_\rho} = 0.
$$
\end{proposition}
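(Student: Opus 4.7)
The plan splits naturally according to the two statements. Part (1) is immediate: since $S(\rho)(\cdot, y) = \frac{1}{4\pi\rho}V\,\delta_\rho(|\cdot-y|)$ is a measure in $x$ concentrated on $\{|x-y|=\rho\}$, integrating its total variation in $\rho$ consumes the delta, giving
\[
\|\chi_{\rho\geq R}\,S\|_{L^\infty_y\M_xL^1_\rho} = \frac{1}{4\pi}\sup_y\int_{|x-y|\geq R}\frac{\dd|V|(x)}{|x-y|},
\]
which is the distal Kato integral and vanishes by hypothesis.

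For part (2), the main ingredient is the bilinear bound
\[
\|S_V * S_W\|_{L^\infty_y\M_xL^1_\rho} \les \|V\|_\K\|W\|_\K,
\]
derived from the explicit convolution
\[
(S_V * S_W)(\rho)(x, y) = \frac{V(x)}{(4\pi)^2}\int_{\R^3}\frac{W(z)\,\delta(|x-z|+|z-y|-\rho)}{|x-z|\,|z-y|}\,\dd z,
\]
supported on the ellipsoid of foci $x, y$ and semi-major axis $\rho/2$: take $\M_x$ norm, integrate the $\rho$-delta to unity, and apply the Kato bound twice (once in $x$, once in $z$). Bilinear expansion $S_V^2 - S_{V_n}^2 = S_{V-V_n} * S_V + S_{V_n} * S_{V-V_n}$ with the triangle inequality then reduces the continuity-under-translation claim for $S_V^2$ to the same for $S_{V_n}^2$ with $V_n \in \D$ approximating $V$ in $\K$, since $\|S_V^2 - S_{V_n}^2\|_{L^\infty_y\M_xL^1_\rho} \les (\|V\|_\K + \|V_n\|_\K)\|V-V_n\|_\K$.

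For $V_n \in \D$ supported in $B(0,R)$, I would change to prolate spheroidal coordinates $(u,v,\phi)$ adapted to the foci (with $u = |x-z|+|z-y|$), yielding the Jacobian identity $\dd z/(|x-z||z-y|) = \dd u\,\dd v\,\dd\phi/(2|x-y|)$ and the explicit formula
\[
S_{V_n}^2(\rho)(x,y) = \frac{V_n(x)}{32\pi^2|x-y|}\int_0^{2\pi}\!\!\int_{-|x-y|}^{|x-y|} V_n(z(\rho,v,\phi))\,\dd v\,\dd\phi,
\]
supported in a $\rho$-interval of length at most $4R$. To beat the $y$-supremum, split at a large threshold $N$: for $|y|>N>R$, the crude bound $|S_{V_n}^2(\rho+\epsilon) - S_{V_n}^2(\rho)| \leq |S_{V_n}^2(\rho+\epsilon)| + |S_{V_n}^2(\rho)|$ combined with $\sup_{|y|>N}\int\dd\rho\int\dd|V_n|(x)\,S_{V_n}^2(\rho)(x,y) \les \|V_n\|_\K\|V_n\|_1/(N-R) \to 0$ (another application of the bilinear estimate) handles the far regime; for $|y|\leq N$, the Lipschitz continuity of $V_n$ together with the integrable velocity singularity $|\partial_\rho z| = \sqrt{\rho^2 - v^2}/(2\sqrt{\rho^2-|x-y|^2}) \les (\rho-|x-y|)^{-1/2}$ and the uniformly bounded jump of $S_{V_n}^2$ at $\rho=|x-y|$ yield an $O_N(\epsilon)$ bound. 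Sending $\epsilon \to 0$ first, then $N \to \infty$, completes the proof.

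\textbf{Main obstacle:} uniformity in $y$ for the smooth case. Naive Lipschitz estimates produce constants growing in $|y|$ because the ellipsoid elongates with $|x-y|$; the splitting argument above converts this growth into benign $O_N(\epsilon)$ behavior in the near regime, while the far regime is controlled by Kato-norm decay of $S_{V_n}^2$ itself in $|y|$.
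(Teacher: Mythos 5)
Your proof is correct and takes a genuinely different path from the paper's. Part (1) and the bilinear reduction to test functions coincide with the paper's strategy (the paper uses the same telescoping identity $S_V^2-\tilde S_V^2$ together with $\|S_V\|_{L^\infty_y\M_xL^1_\rho}\leq\|V\|_\K/(4\pi)$). Where you diverge is in the treatment of the test-function case: the paper's Lemma \ref{second_cond} truncates $S$ in $t$ to a finite interval, which automatically confines $x$ and $y$ to a compact set, writes $S_1 \in L^\infty_{x,y}L^1_t$ with a curvature-dependent constant, and then invokes Arzel\`a--Ascoli and Kolmogorov--Fr\'echet to obtain equicontinuity under translation. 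You instead give an explicit quantitative argument: the prolate-spheroidal Jacobian identity exhibits $S_{V_n}^2(\cdot)(x,y)$ as a function of bounded variation in $\rho$, supported on an interval of length $4R$, with a single jump at $\rho=|x-y|$ bounded by $\|V_n\|_\infty|V_n(x)|$ and with $\int|\partial_\rho S_{V_n}^2|\dd\rho\les|V_n(x)|\|\nabla V_n\|_\infty\sqrt{R|x-y|}$ coming from the integrable $(\rho-|x-y|)^{-1/2}$ singularity; the BV translation estimate $\int|g(\rho+\epsilon)-g(\rho)|\dd\rho\leq\epsilon\|g\|_{TV}$ then gives the near-regime bound, while the far regime is handled by the observation that $\|S_{V_n}^2(\cdot)(\cdot,y)\|_{\M_xL^1_\rho}\les\|V_n\|_\K\|V_n\|_1/(N-R)$ for $|y|>N$. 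This is more work but yields explicit moduli of continuity, which the compactness argument does not; and it avoids the ellipsoid-curvature considerations that the paper waves at without fully pinning down. Both approaches are valid, and the conclusion matches Lemma \ref{second_cond}.
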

%This is the only place where we use the local integrability of $V \in \K_0$, see Lemma \ref{second_cond}.

\subsection{Spectral multipliers} By functional calculus, one can define a family of commuting operators $f(H)$, such that $f(H)=H$ when $f(z)=z$. These are called spectral multipliers and are initially defined for (germs of) analytic functions $f$ bounded on $\sigma(H)$, the \textbf{spectrum} of $H$, by a Cauchy integral around it. More generally, since $H$ is self-adjoint, $f(H)$ can be defined in terms of the spectral measure
$$
f(H) = \int_{\sigma(H)} f(\lambda) \dd E_H(\lambda)
$$
and is $L^2$-bounded whenever $f$ is a bounded Borel measurable function on $\sigma(H)$. Either way, $f \mapsto f(H)$ is an algebra homomorphism.

Common classes of spectral multipliers include the resolvent, fractional integration operators, Mihlin multipliers, Bochner--Riesz sums, the Schr\"{o}dinger evolution, the wave and half-wave propagators, the heat flow, the Poisson kernel, and others, see below.

Some of these operators were studied in \cite{becgol3}, while others are the subject of our present paper.

If $V$ is a small perturbation of $-\Delta$, then $H$ behaves similarly to $-\Delta$ and, more specifically, $f(H)$ has properties similar to those of $f(-\Delta)$. Our goal is to extend these properties to the case of large perturbations, including when $H$ has bound states.

In the free case ($V=0$) in $\R^3$, the following properties are well-known:

\begin{enum}
\item Resolvent bounds: The free resolvent $R_0(\lambda) := (-\Delta-\lambda)^{-1}$ is analytic on $\C \setminus \sigma(-\Delta)$, where $\sigma(-\Delta)=[0, \infty)$, having the integral kernel
$$
R_0(\lambda)(x, y) = \frac 1 {4\pi} \frac {e^{-\sqrt{-\lambda}}|x-y|}{|x-y|}. 
$$
Here $\sqrt z$ is the main branch of the square root.

By the limiting absorption principle, $R_0(\lambda)$ has continuous (in e.g.\ the strong $\B(L^{6/5}, L^{6})$ topology, not in norm) extensions to $[0, \infty)$, where it approaches different boundary values on either side:
$$
R_0(\lambda \pm i0)(x, y) = \frac 1 {4\pi} \frac {e^{\pm i\sqrt\lambda |x-y|}}{|x-y|}.
$$

Along the real line, $R_0$ is not $L^2$-bounded, but is still uniformly bounded from $L^{6/5}$ to $L^6$ or (stronger) from $L^{6/5, 2}$ to $L^{6, 2}$. Furthermore, it obeys the uniform bound, for $\lambda \in \C$,
$$
|R_0(\lambda)(x, y)| \leq \frac 1 {4\pi |x-y|}.
$$

Such resolvent bounds were obtained in \cite{becgol3} for the perturbed case.

\item Fractional integration operators: For $0<\re s<3/2$, $I_s=(-\Delta)^{-s}$ has the integral kernel
$$
I_s(x, y) = C_s |x-y|^{2s-3},
$$
while for $\re s >0$ the kernel of $J_s=(I-\Delta)^{-s}$ has the same decay at infinity and is also bounded.

Such pointwise bounds and corresponding $L^p$ fractional integration estimates were obtained for the perturbed case in \cite{becgol3}.

\item Schr\"{o}dinger and wave propagators: the free Schr\"{o}dinger and wave propagators
%have integral kernels
%$$
%e^{it\Delta}(x, y) := \frac 1 {(4\pi i t)^{3/2}} e^{-\frac{|x-y|^2}{4it}},\ \frac {\sin(t\sqrt{-\Delta})}{\sqrt{-\Delta}}(x, y) := \frac 1 {4\pi t} \delta_{|t|}(|x-y|).
%$$
%The free Klein--Gordon propagator is
%$$
%\frac {\sin(t\sqrt{-\Delta+1})}{\sqrt{-\Delta+1}}(x, y) := \frac 1 {4\pi t} \delta_{|t|}(|x-y|)+ \chi_{|x-y| \leq |t|}\frac {\sgn t}{4\pi} \frac {J_1(\sqrt{t^2-|x-y|^2})}{\sqrt{t^2-|x-y|^2}}.
%$$
%They
satisfy dispersive estimates such as pointwise decay, Strichartz, and reversed Strichartz estimates, as proved in \cite{becgolschr}, \cite{becgol}, and \cite{becgol3}.

Dispersive estimates for the Klein--Gordon propagator are addressed in \cite{becche}.

\item Heat and Poisson kernels: For $t>0$, the free heat kernel is given by the Gaussian
$$
e^{t\Delta}(x, y) := \frac 1 {(4\pi t)^{3/2}} e^{-\frac{|x-y|^2}{4t}}
$$
and forms an approximation to the identity.

More generally, the perturbed heat kernel also satisfies Gaussian bounds, including when $H$ has bound states, as we prove in this paper.

The free Poisson kernel is given for $t>0$ by
$$
e^{-t\sqrt{-\Delta}}(x, y) = \frac 1 {4\pi^2} \frac t {(|x-y|^2+t^2)^2}
$$
and also forms an approximation to the identity. In general, the perturbed Poisson kernel is bounded by a multiple of the free Poisson kernel, as we prove here.

\item Mihlin spectral multipliers: Several ways of defining the Mihlin symbol class lead to substantially similar properties. Mihlin's classical condition is that $|m^{(k)}(\lambda)| \les \lambda^{-k}$ for $0 \leq k \leq \lfloor d/2 \rfloor+1$. A sharper condition due to H\"{o}rmander is
$$
\sup_{\alpha > 0} \|\chi(\lambda) m(\lambda/\alpha)\|_{H^{d/2+\epsilon}} < \infty
$$
for any standard cutoff function $\chi$.

%A particularly useful case is $m(H)=H^{i\sigma}$, $\sigma \in \R$, which helps with complex interpolation.

In the free case, Mihlin multipliers $m(H)$ are bounded on $L^p$, $1<p<\infty$, and of weak $(1, 1)$ type, i.e.\ bounded from $L^1$ to $L^{1, \infty}$. Mihlin multiplier bounds for the perturbed case are addressed in \cite{becgol3}.

\item Bochner--Riesz means: Bochner--Riesz means correspond to the spectral multipliers $M_\alpha(\lambda) = (1-\lambda/\lambda_0)_+^\alpha$. In the free case one can take $\lambda_0=1$, due to the scaling symmetry. The value $\alpha=0$ corresponds to the Dirichlet kernel, $\alpha=d$ to Ces\`{a}ro summation of the Fourier series, and $\alpha<0$ to restriction theorems.

In the free case, the integral kernel of $(I+\Delta)_+^\alpha$ is
$$
(I+\Delta)_+^\alpha(x, y)=\frac {\Gamma(\alpha+1) J_{d/2+\alpha}(2\pi|x-y|)} {\pi^\alpha |x-y|^{d/2+\alpha}}.
$$

Consequently, $(I+\Delta)_+^\alpha$ satisfies at least the same $L^p$ bounds as the fractional integration operator $J_{(1-\alpha)/2}$, but more hold due to the oscillations of the Bessel functions. For example, $(I+\Delta)_+^\alpha$ is $L^2$-bounded whenever $\re \alpha \geq 0$.

\cite{fef} showed that above dimension $1$, when $\alpha=0$ $\chi_{[0, 1]}(-\Delta) \in \B(L^p)$ only when $p=2$. In general, $M_\alpha \in \B(L^p)$ whenever $|\frac 1 p - \frac 1 2| < \frac {\alpha}{d-1}= \frac {\alpha} 2$ for $d=3$, but $M_\alpha \not \in \B(L^p)$ when $|\frac 1 p - \frac 1 2| \geq \frac {2\alpha+1}{2d}= \frac {2\alpha+1} 6$. Various bounds are true in between.

Here we extend some of these $L^p$ bounds to the perturbed case.
\end{enum}

Several of these multiplier classes form algebras, semigroups parametrized over $[0, \infty)$ or e.g.~$\{0\} \cup \{z \in \C \mid \im z > 0\}$, or groups. Most of them are $L^2$-bounded, because their symbols are bounded on $\sigma(-\Delta)=[0, \infty)$.

Spectral multipliers can be defined simply by spectral calculus, but must be radially symmetric. More general multipliers, such as partial derivatives or non-radial Mihlin multipliers including Riesz transforms, correspond to non-radial symbols and are not defined by spectral calculus, but by means of the (skewed) Fourier transform or wave operators. They will be the subject of a different paper.

\subsection{Discussion of the results} 

Heat kernel bounds are affected by the presence of any bound states, including negative energy ones. However, see Proposition \ref{heat}, which establishes heat kernel bounds in the presence of negative energy bound states.

One way to guarantee that there are no bound states is to assume that $V \geq 0$ or at least its negative part $V_-:=\min(V, 0)$ has a sufficiently small Kato norm (\ref{katonorm}):
$$
\|V_-\|_\K < 4\pi.
$$
If $V \geq 0$, one can weaken the regularity and decay assumptions on $V$, a common assumption then being that $V$ belongs to a reverse H\"{o}lder class.

Absent bound states, one can use heat kernel estimates to retrieve several inequalities on our list, see \cite{sikora}. Gaussian heat kernel bounds lead to estimates for Bochner-Riesz sums, Riesz transforms, certain Mihlin multipliers, etc.; see \cite{jiang}. Gaussian heat kernel bounds are known on more general Riemannian manifolds, see \cite{grigoryan}, or metric measure spaces, see \cite{sturm}, \cite{duo}, \cite{ams}, \cite{jiang}.

Our results are new in that they also apply to Hamiltonians with bound states and hold for a wider range of Lebesgue spaces.

%\section{Harmonic analysis tools} In this paper we use a number of classical harmonic analysis results, such as the maximal function, Paley--Littlewood theory, Wiener's theorem, Khintchine's inequality, H\"{o}rmander's cancellation condition, Hardy spaces, and others. In order to make the paper as self-contained as possible, we list several of them below.
%
%\textit{The maximal function.} Given $f \in L^1_{loc}$, its maximal function is defined by
%$$
%M(f)(x):= \sup_{r>0} \frac 1 {|B(x, r)|} \int_{B(x, r)} |f(y)| \dd y,
%$$
%where $B(x, r)$ is the ball centered at $x$ of radius $r$. The maximal function is subadditive and
%$$
%\|M(f)\|_{L^p} \les_p \|f\|_{L^p},\ 1<p\leq \infty,\ \|M(f)\|_{L^{1, \infty}} \les \|f\|_{L^1}.
%$$
%
%\textit{H\"{o}rmander's cancellation condition.}

%These Hamiltonians, which appear naturally when linearizing semilinear equations, have many similar properties to the free Hamiltonian $-\Delta$.

\section{Notations and preliminary remarks}

\noindent Notations:
\begin{itm}
\item $a \les b$ means that there exists a constant $C$ such that $|a| \leq C |b|$
\item $\langle x \rangle=(1+|x|^2)^{1/2}$
\item $a_+=\max(a, 0)$
\item $\dot H^s$, $H^s$, $\dot W^{s, p}$, $W^{s, p}$ are Sobolev spaces
\item $\M$ is the Banach space of measures of bounded variation
\item $R_0(\lambda)=(-\Delta-\lambda)^{-1}$ and $R_V(\lambda)=(-\Delta+V-\lambda)^{-1}$ are the free and the perturbed resolvents
\item $AB$ is the image of the Banach space $B$ under the operator $A$, which is again a Banach space; an example is $\langle x \rangle \M$, where $\langle x \rangle$ means pointwise multiplication by $\langle x \rangle$
\item $A(\lambda)=VR_0(\lambda)$, $A_t(\lambda)=I+tA(\lambda)$.
\end{itm}

When $V \in \K$ has the local and modified distal properties, by our present results and by Assumption \ref{assum} the point spectrum of $H$ consists of $N$ negative eigenvalues $\lambda_1\geq\ldots\geq\lambda_N$ with associated eigenfunctions $f_k$.

We have the following Agmon-type bound for the decay of eigenfunctions:
\begin{proposition} Under the previous assumptions, let $f \in L^2$ be a solution of the eigenfunction equation
$$
H f = \lambda f
$$
for $\lambda<0$. Then
$$
|f(x)| \les \frac {e^{-\sqrt{-\lambda} |x|}}{\langle x \rangle}.
$$
\end{proposition}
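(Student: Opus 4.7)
The plan is to exploit the integral equation $f = -R_0(\lambda)(Vf)$, which holds since $\lambda<0$ lies in the resolvent set of $-\Delta$. Writing $k = \sqrt{-\lambda}$, the kernel of $R_0(\lambda)$ is the Yukawa kernel $e^{-k|x-y|}/(4\pi|x-y|)$, so the exponential decay is built into the equation; moreover the target profile $e^{-k|x|}/\langle x \rangle$ mirrors exactly the decay of $R_0(\lambda)(x,0)$, so one expects a self-consistent pointwise bound once enough a priori regularity is in hand.

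The first step is to upgrade the hypothesis $f \in L^2$ to $f \in L^\infty$. For $V \in \K_0$ this follows from the subsolution/heat-semigroup estimates of Aizenman--Simon (yielding $\|f\|_\infty \les \|f\|_{L^2}$ directly); alternatively, one can bootstrap using the integral equation itself, via the bounds $\|R_0(\lambda) g\|_\infty \leq \|g\|_\K/(4\pi)$ and $\|V g\|_\K \leq \|V\|_\K \|g\|_\infty$ once $Vf$ is placed in $\K$.

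The second step is to establish an intermediate exponential decay $|f(x)| \les e^{-(k-\epsilon)|x|}$ for every $\epsilon>0$, via a Combes--Thomas/Agmon argument that uses the form-boundedness of $V$ relative to $-\Delta$ afforded by $V \in \K$. To upgrade to the sharp rate $k$ together with the polynomial factor $\langle x \rangle^{-1}$, I would feed this intermediate bound back into the integral equation. The key device is the triangle inequality $|x-y|+|y| \geq |x|$, which collapses the product of two exponentials: for any $h$ with $h(y) \les e^{-k|y|}/\langle y \rangle$, the convolution $\int e^{-k|x-y|}|V(y)|h(y)/(4\pi|x-y|)\dd y$ is bounded by $e^{-k|x|}$ times $\int |V(y)|/(4\pi|x-y|\langle y \rangle)\dd y$. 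Splitting this remaining integral at $|y| = |x|/2$ gives the polynomial gain: in the inner region $|x-y|^{-1} \les 1/|x|$ and $\int |V|/\langle y \rangle \les \|V\|_\K$, while in the outer region $\langle y \rangle^{-1} \les 1/|x|$ and $\int |V|/|x-y| \les \|V\|_\K$. Combining these yields the self-consistent bound $|f(x)| \les e^{-k|x|}/\langle x \rangle$.

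The principal obstacle is bridging from the Combes--Thomas rate $k-\epsilon$ to the exact rate $k$ and extracting the additional polynomial factor $\langle x \rangle^{-1}$. This demands precise use of the geometric structure of the integral equation --- specifically, the collapse of the two exponential factors via the triangle inequality --- together with the modified distal Kato property of $V \in \K_0$, which is what guarantees that the splitting estimates above have finite constants and that no tail contribution spoils the polynomial decay rate.
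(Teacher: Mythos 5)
The paper does not give a proof of this proposition; it says only that a proof is contained in the reference [BeGo2], so there is no paper proof to compare against directly. I will assess your proposal on its own merits.

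Steps one through three of your outline are sound: the integral equation $f=-R_0(\lambda)(Vf)$, the upgrade from $L^2$ to $L^\infty$ via Aizenman--Simon or an equivalent Kato-class subsolution estimate, and the Combes--Thomas estimate $|f(x)| \les e^{-(k-\epsilon)|x|}$ are all legitimate. The gap is in step four, and it is genuine. You claim the triangle inequality ``collapses'' $e^{-k|x-y|}e^{-(k-\epsilon)|y|}$ to produce a factor $e^{-k|x|}$, but it does not: the best one can extract is $e^{-k|x-y|}e^{-(k-\epsilon)|y|} \le e^{-(k-\epsilon)|x|}e^{-\epsilon|x-y|}$, and the remaining factor $\int e^{-\epsilon|x-y|}|V(y)|/|x-y|\,dy$ does not decay exponentially for a general $V\in\K_0$ (try $V(y)=\langle y\rangle^{-3}$: this integral only decays polynomially). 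Splitting at $|y|=|x|/2$ fares no better, because in the region $|y|>|x|/2$ the only available decay is $e^{-(k-\epsilon)|y|}\le e^{-(k-\epsilon)|x|/2}$, which is strictly worse than the rate you started with. The computation you display in step four is a self-consistency check (assuming the target profile $e^{-k|y|}/\langle y\rangle$ and reproducing it), not a bootstrap from the Combes--Thomas rate; your concluding paragraph acknowledges this but attributes the rescue to the inner/outer split at $|y|=|x|/2$, which uses only $V\in\K$, not the modified distal Kato property.

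The missing mechanism is a decomposition of the potential, not just of the integration region. Write $V=V_1+V_2$ with $V_1$ compactly supported in a ball $B_R$ and $\|V_2\|_\K$ small, which is exactly what $V\in\K_0$ (the modified distal Kato property) provides. With $\psi=|f|$ and $A=R_0(\lambda)|V_1|$, $B=R_0(\lambda)|V_2|$, iterate $\psi\le A\psi+B\psi$ to obtain $\psi\le\sum_{j\ge0}B^jA\psi$, the tail $B^n\psi\to0$ in $L^\infty$ because $\|B\|_{\B(L^\infty)}\le\|V_2\|_\K/4\pi<1$. The compactly supported piece does all the work producing the sharp profile: for $|x|\ge2R$ one has $|x-y|\ge|x|/2$ and $e^{-k|x-y|}\le e^{kR}e^{-k|x|}$ on $\supp V_1$, while $\int_{B_R}|V_1|\le R\|V\|_\K$ by \eqref{frostman}, so $A\psi(x)\les e^{-k|x|}/\langle x\rangle$. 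The small-norm piece merely preserves this profile: by the triangle inequality and the split at $|y|=|x|/2$ that you describe, $Bh\le c_0\|V_2\|_\K\,h$ with $h(x)=e^{-k|x|}/\langle x\rangle$ and an absolute constant $c_0$; taking $\|V_2\|_\K<1/c_0$ makes the Neumann series converge on this profile, giving $\psi\les h$. Note also that once this decomposition is in place the Combes--Thomas step is superfluous --- one passes directly from $f\in L^\infty$ to the sharp bound. Your intuition about where the difficulty lies and what role $\K_0$ plays is correct, but as written the proposal does not contain the idea that closes the argument.
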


Therefore
\be\lb{eigen}
|f_k(x) \otimes \ov f_k(y)| \les \frac {e^{-\sqrt{-\lambda_k}(|x|+|y|)}}{\langle x \rangle \langle y \rangle}.
\ee
A proof of this result is contained in \cite{becgol}.

Multiplier bounds are all related, for a fixed Hamiltonian $H$, and each can be used to prove several others. Since it is the most singular, having good control on the forward wave propagator
$$
T(\tau) = \chi_{\tau \geq 0} \frac {\sin(\tau\sqrt H) P_c}{\sqrt H}
$$
(see \cite{becgolschr} and \cite{becgol}) implies most of the other bounds.

An important consideration is the finite speed of propagation for solutions to the wave equation, which leads to sharp estimates for $T$ outside the light cone: it vanishes or decreases exponentially in space. In fact, there its integral kernel has the explicit form
\be\lb{explicit}
\chi_{|x-y|>\tau} T(\tau)(x, y) = - \chi_{|x-y|>\tau} \sum_{k=1}^N \frac{\sinh(\tau \sqrt{-\lambda_k})}{\sqrt{-\lambda_k}} f_k(x) \otimes \ov f_k(y).
\ee

Indeed, due to its finite speed of propagation, the wave evolution for $H$ vanishes outside the light cone. In case $H$ has no bound states, the same is true for the integral kernel of $T$, which equals the full evolution then. Otherwise, outside the light cone the contribution of the continuous spectrum exactly cancels out that of the point spectrum, but neither vanishes. This gives the precise expression (\ref{explicit}).

Inside the light cone, the behavior of $T$ is governed by two bounds from \cite{becgol} and \cite{becgol3}, where the former is especially important:
$$
\sup_{x, y} \|\tau T(\tau)(x, y)\|_{\M_\tau} < \infty,\ \|T(\tau)(x, y)\|_{\M_\tau} \les \frac 1 {|x-y|}.
$$
While these hold both inside and outside the light cone, outside (\ref{explicit}) is sharper.

\section{Poisson kernel bounds}

Poisson's equation on a domain $D$ is
$$
-\Delta f=0 \text{ on } D,\ f=g \text{ on } \partial D.
$$
On unbounded domains one may need to impose extra conditions to make the solution unique. On the upper half-space $\H^{n+1}=\{(x, t) \mid x \in \R^n, t>0\}$, a standard condition is that the solution should vanish at infinity (otherwise it can be non-unique up to multiples of $t$).

The solution can then be expressed by integrating a Green's function against the boundary data $g$.

For Poisson's equation in the upper half-space, Green's function is the free Poisson kernel on the upper half-space $\H^{n+1}$, $e^{-t\sqrt{-\Delta}}$, $t>0$, and when $n=3$ is given by
$$
e^{-t\sqrt{-\Delta}}(x, y) = \frac 1 {\pi^2} \frac t {(t^2 + |x-y|^2)^2}.
$$

The perturbed Poisson kernel $e^{-t\sqrt H}$ for the Hamiltonian $H=-\Delta+V$, where $V$ is $t$-independent, similarly acts as Green's function for the perturbed Poisson's equation
$$
(-\Delta+V) f=0 \text{ on } \H^{3+1},\ f=g \text{ on } \R^3=\partial \H^{3+1}.
$$
The negative energy/point spectrum component $e^{-t\sqrt H} P_p$ is given by the finite-rank operator
$$
e^{-t\sqrt H} P_p = \sum_{k=1}^N e^{-t\sqrt{\lambda_k}} f_k(x) \otimes \ov f_k(y),
$$
where $\sqrt{\lambda_k} \in i\R$ since $\lambda_k<0$. Either branch of the square root leads to complex oscillations, but no decay in $t$.

We next characterize the positive energy/continuous spectrum component $e^{-t\sqrt H} P_c$.
\begin{proposition}[The perturbed Poisson kernel]\lb{poisson} Assume that $V \in \K_0$ and $H$ has no zero or positive energy bound states. If in addition $H=-\Delta+V$ has no negative energy bound states, the perturbed Poisson kernel $e^{-t\sqrt H}$ is dominated by the free Poisson kernel:
\be\lb{bound1}
|e^{-t\sqrt H}(x, y)| \les \frac t {(t^2 + |x-y|^2)^2}.
\ee
If $H$ has negative eigenvalues, then the kernel of $e^{-t\sqrt H} P_c$ is the sum of two terms, $K_1(t)$ and $K_2(t)$, such that $K_1(t)$ is dominated by the free Poisson kernel and $K_2(t)$ is dominated by a $t$-independent kernel $K_{20} \in \B(L^p)$, $1 \leq p \leq \infty$, in a manner that also ensures uniform decay as $t \to \infty$:
\be\lb{bound2}
|K_2(t)(x, y)| \les \langle t \rangle^{-1} K_{20}(x, y).
\ee
Furthermore, as $t \to 0$, $K_2(t)$ converges strongly to $-P_p$ in $\B(L^p)$, $1 \leq p \leq \infty$, and for $t \geq 1$ $K_2(t)$ is also dominated by the Poisson kernel.

Finally, the following bounds hold for the perturbed Poisson kernel: for $t \leq 1$
\be\lb{bound3}
|e^{-t\sqrt H}(x, y)| \les \frac t {(t^2 + |x-y|^2)^2}
\ee
and for $t \geq 1$
\be\lb{bound4}
|e^{-t\sqrt H}P_c(x, y)| \les \frac t {(t^2 + |x-y|^2)^2}.
\ee
\end{proposition}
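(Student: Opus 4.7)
The plan is to realize the perturbed Poisson semigroup as a subordination of the wave propagator $T$ from Section 2. Starting from the scalar identity $e^{-ts} = \frac{2}{\pi}\int_0^\infty \frac{t}{t^2+\tau^2}\cos(\tau s)\dd\tau$ (valid for $s, t > 0$), spectral calculus on the continuous subspace of $H$ gives
$$
e^{-t\sqrt H} P_c = \frac{2}{\pi}\int_0^\infty \frac{t}{t^2+\tau^2}\cos(\tau\sqrt H) P_c \dd\tau.
$$
An integration by parts using $\cos(\tau\sqrt H) P_c = T'(\tau)$, with vanishing boundary terms thanks to the Section 2 bounds, converts this into
$$
e^{-t\sqrt H} P_c = \frac{4}{\pi}\int_0^\infty \frac{t\tau}{(t^2+\tau^2)^2}\, T(\tau) \dd\tau.
$$
I would split the integration at $\tau = |x-y|$: let $K_1$ be the contribution from inside the light cone ($\tau > |x-y|$) and $K_2$ the one from outside ($\tau < |x-y|$).

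For $K_1$, the weight $\frac{t}{(t^2+\tau^2)^2}$ is monotonically decreasing in $\tau$, so pulling its supremum over $\tau \geq |x-y|$ out of the integral and invoking $\|\tau T(\tau)(x,y)\|_{\M_\tau} \les 1$ yields $|K_1(t)(x,y)| \les \frac{t}{(t^2+|x-y|^2)^2}$, which matches the free Poisson kernel. If $H$ has no negative bound states, (\ref{explicit}) forces $T$ to vanish outside the light cone, so $K_2 \equiv 0$ and (\ref{bound1}) follows. When bound states are present, substituting (\ref{explicit}) into the outside piece produces
$$
K_2(t)(x,y) = -\frac{4}{\pi}\sum_{k=1}^N g_k(|x-y|,t)\, f_k(x)\ov f_k(y),\quad g_k(r,t) = \int_0^r \frac{t\tau\sinh(\tau\sqrt{-\lambda_k})}{\sqrt{-\lambda_k}\,(t^2+\tau^2)^2}\dd\tau.
$$

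The central analytic step is a double integration by parts on $g_k$: first against $\frac{t\tau}{(t^2+\tau^2)^2} = -\tfrac 12 \partial_\tau \frac{t}{t^2+\tau^2}$, then against $\frac{t}{t^2+\tau^2} = \partial_\tau \arctan(\tau/t)$. The $t\to 0$ limit becomes accessible via the scaling $\tau = ts$, reducing the integral to $\int_0^\infty s^2/(1+s^2)^2\dd s = \pi/4$, whence $K_2(t) \to -P_p$ strongly in $\B(L^p)$. For large $t$, expanding $\arctan(\tau/t)$ in a Taylor series, the leading $1/t$ contributions from the three summands cancel exactly, so $|g_k(r,t)|$ is actually $O(\langle t \rangle^{-3})$ with a prefactor that is absorbed by the eigenfunction decay $|f_k(x)\ov f_k(y)| \les e^{-\sqrt{-\lambda_k}(|x|+|y|)}/(\langle x\rangle \langle y\rangle)$. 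This gives both the universal bound by $\langle t\rangle^{-1} K_{20}$ — with $K_{20}$ a sum of rank-one operators, hence bounded on every $L^p$ since each $f_k \in L^p \cap L^{p'}$ — and, for $t \geq 1$, the improved domination by the free Poisson kernel.

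The main obstacle is to control the residual exponential growth $e^{\sqrt{-\lambda_k}\,|x-y|}$ that $\sinh$ deposits into $g_k$: the eigenfunction decay only partially offsets it, since $|x|+|y| - |x-y|$ degenerates on antipodal configurations. It is precisely the time-dependent cancellations uncovered by the integration-by-parts scheme above that avert an otherwise apparent failure of $L^p$-boundedness, so the exponent tracking in the IBP must be done carefully. Finally, for (\ref{bound3}) one writes $e^{-t\sqrt H} = e^{-t\sqrt H}P_c + e^{-t\sqrt H}P_p$ and exploits the partial cancellation $K_2(t) + e^{-t\sqrt H}P_p = O(t)$ as $t \to 0$ (visible from $K_2(0^+) = -P_p$) to absorb the bound-state contribution into the free Poisson bound on $(0,1]$; the point spectrum piece is oscillatory and $O(1)$ in $t$ with exponential spatial decay, which is why (\ref{bound4}) must project onto $P_c$.
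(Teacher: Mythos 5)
The representation formula, the inside/outside-light-cone split, the substitution of~(\ref{explicit}) on the outside, and the integration-by-parts analysis of the $t\to 0$ limit all mirror the paper. But there is a concrete gap that the paper's choice of split point is designed to avoid and your proposal does not.

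You split the integral at $\tau = |x-y|$ (i.e.~$\delta=1$), which leaves the outside kernel dominated by $e^{-\sqrt{-\lambda_k}(|x|+|y|-|x-y|)}$ after the eigenfunction decay~(\ref{eigen}) is fed in. You correctly observe that this degenerates on antipodal configurations, and the paper says explicitly that this kernel (and the refined one with the extra $\langle x\rangle^{-1}\langle y\rangle^{-1}$ factors) is \emph{not} $L^p$-bounded. Your proposed escape route --- that ``time-dependent cancellations uncovered by the integration-by-parts scheme avert an otherwise apparent failure of $L^p$-boundedness'' --- does not exist: the $\cosh(\tau\sqrt{-\lambda_k})$ growth in $\tau$ is genuine and is present in every term of the integrated-by-parts expression~(\ref{inte}); no amount of IBP removes it. The paper's fix is elementary but essential: split at $\tau=\delta|x-y|$ with $\delta<1$. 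Then the outside kernel is bounded by $e^{-\sqrt{-\lambda_k}(|x|+|y|-\delta|x-y|)}\leq e^{-(1-\delta)\sqrt{-\lambda_k}(|x|+|y|)}$, which is exponentially decaying in $x$ and $y$ separately and is an $L^p$-bounded (indeed rank-one-like) kernel, giving $K_{20}$; meanwhile the near-cone region $\tau\geq\delta|x-y|$ still yields the Poisson bound because the weight is monotone and $\|\tau T(\tau)\|_{\M_\tau}$ is finite.

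A second issue: your claimed $g_k(r,t)=O(\langle t\rangle^{-3})$ from a Taylor expansion of $\arctan(\tau/t)$ is only true pointwise for fixed $r$, with a prefactor $\sim\cosh(r\sqrt{-\lambda_k})$ that grows exponentially in $r$. It therefore cannot, on its own, yield the uniform-in-$(x,y)$ estimate $|K_2(t)|\les\langle t\rangle^{-1}K_{20}(x,y)$ with an $L^p$-bounded $K_{20}$ unless you have already tamed the spatial exponential --- which again requires $\delta<1$. The paper's own estimate for large $t$ is cruder, $\min(1,\delta/t)$, but it suffices once the spatial kernel is under control, and a single IBP (not two) is used for the $t\to 0$ asymptotics. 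In short: adopt the $\delta<1$ split; drop the ``hidden cancellation'' step, because once the split is corrected it is not needed, and as stated it is not true.
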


%From here it is trivial to also obtain estimates for derivatives of the modified Poisson kernel, i.e.
%$$
%$$
\begin{proof}
The positive energy component of the Poisson kernel is given by the formula
\be\lb{repr}\begin{aligned}
e^{-t\sqrt H} P_c (x, y) &= \frac 1 {2\pi i} \int_0^\infty e^{-t\sqrt \lambda} (R_V(\lambda+i0) - R_V(\lambda-i0)) \dd \lambda \\
&= \frac 1 {\pi i} \int_{-\infty}^\infty \lambda e^{-t|\lambda|} R_V((\lambda+i0)^2) \dd \lambda \\
&= \frac 4 \pi \int_0^\infty \frac {t\tau} {(t^2+\tau^2)^2} T(\tau)(x, y) \dd \tau.
\end{aligned}\ee

For $\delta<1$, we distinguish the two regions $\tau \geq \delta |x-y|$, inside and near the light cone, and $0 \leq \tau < \delta |x-y|$, strictly outside the light cone, whose contributions we call $K_1(t)$, respectively $K_2(t)$.

In the first region, we get a bound of
$$
\bigg|\int_{|x-y|/2}^\infty \frac {t\tau} {(t^2+\tau^2)^2} T(\tau)(x, y) \dd \tau\bigg| \leq \sup_{\tau \geq \delta |x-y|} \frac t {(t^2+\tau^2)^2} \sup_{x, y} \|\tau T(\tau)\|_{\M_\tau} \les \frac t {(t^2+\delta^2|x-y|^2)^2},
$$
so $K_1(t)$ is dominated by the Poisson kernel.

In the second region $\tau<\delta|x-y|$, the same computation only yields a bound of $t^{-3}$ or $t^{-2} |x-y|^{-1}$ with more effort. To improve on that, we use the explicit form (\ref{explicit}) of the integral kernel of $T$ in that region.
%
%our other explicit bound for the sine propagator and find
%$$
%\int_0^{|x-y|} \frac {t\tau} {(t^2+\tau^2)^2} T(\tau)(x, y) \dd \tau \leq \int_0^{|x-y|} \frac t {|x-y|^3} \frac {\tau^4} {(t^2+\tau^2)^2} T(\tau)(x, y) \dd \tau \leq \frac t {|x-y|^3} \int_0^\infty \sup_{\tau \geq |x-y|} \frac t {(t^2+\tau^2)^2} \sup_{x, y} \|\tau T(\tau)\|_{\M_\tau} \les \frac t {(t^2+|x-y|^2)^2}.
%$$

In case $H$ has no bound states, the integral kernel of $T$ cancels outside the light cone. Thus, in this case there is no contribution from the second region and we already have the correct bound in the first region (and we can take $\delta=1$), which proves (\ref{bound1}).

Otherwise, the expression (\ref{explicit}) has exponential growth in $\tau$, but we only evaluate it over the finite interval $\tau \in [0, \delta |x-y|)$, on which the exponential growth is compensated by the exponential decay in space of the eigenstates $f_k$.

Replace $T(\tau)$ by (\ref{explicit}) in the expression of $K_2(t)$:
\be\lb{k2}
K_2(t)(x, y) = -\frac 4 \pi \sum_{k=1}^N \bigg(\int_0^{\delta|x-y|} \frac {t\tau} {(t^2+\tau^2)^2} \frac{\sinh(\tau \sqrt{-\lambda_k})}{\sqrt{-\lambda_k}} \dd \tau\bigg) f_k(x) \otimes \ov f_k(y).
\ee
For each $1 \leq k \leq N$ we estimate 
\be\lb{complicated}
\int_0^{\delta|x-y|} \frac {t\tau} {(t^2+\tau^2)^2} \frac{\sinh(\tau \sqrt{-\lambda_k})}{\sqrt{-\lambda_k}} \dd \tau.
\ee
%(One way of computing this integral is to symmetrize it over the interval $[-\delta|x-y|, \delta|x-y|]$ and then approximate it with the integral from $-i\infty$ to $i\infty$ and compute the remainder. However, we do not pursue this.)

To prove Poisson bounds, it suffices to compare an expression to $t^{-3}$ and to $t |x-y|^{-4}$ separately. Furthermore, when $t \leq 1$, for the first comparison it suffices to prove the expression is bounded.

It is easy to see that the contribution of (\ref{complicated}) is dominated by $t^{-3}$. On the other hand, we shall see that it does not vanish at $t=0$, so $K_2(t)$ does not satisfy the second bound.

The expression (\ref{complicated}) is roughly bounded by
$$
\bigg(\int_0^{\delta |x-y|} \frac {t \dd\tau} {t^2+\tau^2}\bigg) \sup_{\tau \leq \delta |x-y|} \frac{\sinh(\tau \sqrt{-\lambda_k})}{\tau \sqrt{-\lambda_k}} \les \min\bigg(1, \frac {\delta |x-y|} t\bigg) \cosh(\delta|x-y| \sqrt{-\lambda_k}),
$$
since
$$
\frac {\sinh x} x \leq \cosh x,\ \frac {\cosh x - 1}{x^2} \leq \frac {\cosh x} 2.
$$
Taking into account the decay of $f_k(x) \otimes \ov f_k(y)$ (\ref{eigen}) in the full expression of $K_2(t)$,
$$
|f_k(x) \otimes \ov f_k(y)| \les \frac {e^{-\sqrt{-\lambda_k}(|x|+|y|)}}{\langle x \rangle \langle y \rangle},
$$
we get a uniform bound over the second region, for the term corresponding to the $k$-th bound state $f_k$, of
$$
\min(1, \delta/t) e^{-\sqrt{-\lambda_k}(|x|+|y|-\delta|x-y|)}
$$
where $\min(1, \delta/t)$ can be replaced by $\langle t \rangle^{-1}$ for fixed $\delta<1$, say $\delta=1/2$.

For $\delta=1$, this integral kernel and the related one
$$
\frac {e^{-\sqrt{-\lambda_k}(|x|+|y|-\delta|x-y|)}}{\langle x \rangle \langle y \rangle}
$$
are not $L^p$-bounded. However, they are $L^p$-bounded when $\delta<1$.

This proves (\ref{bound2}) when $H$ has bound states, with
$$
K_{20}(x, y)=\sum_{k=1}^N e^{-(1-\delta)\sqrt{-\lambda_k}(|x|+|y|)}.
$$
Together with the $t^{-3}$ bound, it also implies Poisson estimates for $K_2(t)$ when $t \geq 1$, proving (\ref{bound4}).

%Choosing other values for $\delta$ as a function of $t$ and $|x-y|$ leads to different estimates, which are however less sharp than the one proved above.

To obtain more precise $K_2(t)$ asymptotics as $t \to 0$, assume $t \leq 1$. Integrating by parts in (\ref{complicated}) yields
\be\lb{inte}
\int_0^{\delta|x-y|} \frac t {2(t^2+\tau^2)} \cosh(\tau \sqrt{-\lambda_k}) \dd \tau -\frac t {2(t^2+\delta^2|x-y|^2)} \frac {\sinh(\delta|x-y|\sqrt{-\lambda_k})}{\sqrt{-\lambda_k}}.
\ee
Note that
$$
\frac 1 {t^2 + |x-y|^2}
$$
is, up to a constant, the integral kernel of $\frac {e^{-t\sqrt{-\Delta}}}{\sqrt {-\Delta}}$.

The second (boundary) term in (\ref{inte}) satisfies Poisson bounds, since it is dominated by
$$
\frac t {\delta^2 |x-y|^2} e^{-(1-\delta)\sqrt{-\lambda_k}(|x|+|y|)} \les t \delta^{-2} |x-y|^{-4}
$$
and also
$$
\frac {t \delta |x-y|} {2(t^2+\delta^2|x-y|^2)} \frac {\sinh(\delta|x-y|\sqrt{-\lambda_k})}{\delta |x-y| \sqrt{-\lambda_k}} \les e^{-(1-\delta)\sqrt{-\lambda_k}(|x|+|y|)} \leq 1.
$$
As $t$ approaches $0$, the first term in (\ref{inte}) converges to $\pi/4$. We estimate the difference as follows:
$$\begin{aligned}
&\int_0^{\delta|x-y|} \frac t {2(t^2+\tau^2)} \cosh(\tau \sqrt{-\lambda_k}) \dd \tau - \frac \pi 4 = \\
&=\int_{\delta |x-y|}^\infty \frac {t \dd \tau}{2(t^2+\tau^2)} + \int_0^{\delta|x-y|} \frac {t \tau^2} {2(t^2+\tau^2)} \frac {\cosh(\tau \sqrt{-\lambda_k}) - 1} {\tau^2} \dd \tau \\
&\les \min\bigg(1, \frac t {\delta |x-y|}\bigg) + t\delta|x-y| \cosh(\delta|x-y| \sqrt{-\lambda_k}) |\lambda_k|.
\end{aligned}$$
Reintroducing $f_k(x) \otimes \ov f_k(y)$, we see that this expression also satisfies Poisson bounds.

Summing over $k$, the difference between the previous expression and $\frac \pi 4 P_p$ satisfies Poisson bounds, uniformly for $t \leq 1$.

After including the factors of $-1$ from (\ref{explicit}) and $4/\pi$ from (\ref{repr}), we find that the difference between $K_2(t)$ and $-P_p$ satisfies Poisson bounds. We then replace $P_p$ by $e^{-t\sqrt H} P_p$, since their difference is also dominated by the free Poisson kernel. Since the same is true for $K_1(t)$, the whole perturbed Poisson kernel $e^{-t\sqrt H}$ is dominated by the free Poisson kernel when $t \leq 1$, proving (\ref{bound3}).

Another way to evaluate the difference is by dominated convergence. Both the second term in (\ref{inte}) and the difference between the first term in (\ref{inte}) and $\pi/4$ vanish, because they are dominated by
$$
\min\bigg(1, \frac t {\delta |x-y|}\bigg) \cosh(\delta|x-y|\sqrt{-\lambda_k})
$$
and then by
$$
\min\bigg(1, \frac t {\delta |x-y|}\bigg) \frac {e^{-(1-\delta)\sqrt{-\lambda_k}(|x|+|y|)}}{\langle x \rangle \langle y \rangle}
$$
after including $f_k(x) \otimes \ov f_k(y)$. This implies strong convergence to $0$ in $\mc B(L^p)$, $1 \leq p \leq \infty$, as $t$ approaches $0$.

After including the neglected factors of $-1$ and $4/\pi$ and summing over $k$, this dominated convergence results in the strong convergence of $K_2(t)$ to $-P_p$ in $\mc B(L^p)$, $1 \leq p \leq \infty$, finishing the proof.
\end{proof}

%The function $\frac \tau {(t^2+\tau^2)^2}$ reaches a maximum at $\tau=t/\sqrt 3$, so in the second region it is decreasing. This leads to a bound of
%$$
%\les 
%$$

\section{Heat kernel bounds}

The free heat kernel in three dimensions has the formula
$$
e^{t\Delta}(x, y) = \frac 1 {(4\pi t)^{3/2}} e^{-\frac {|x-y|^2}{4t}},
$$
where $t>0$. Like the Poisson kernel, the heat kernel is an approximation to the identity.

In general, the perturbed heat kernel can grow exponentially due to negative energy bound states:
$$
e^{-tH} P_p(x, y) = \sum_{k=1}^N e^{-\lambda_k t} f_k(x) \otimes \ov f_k(y),
$$
where $\lambda_k<0$. However, after projecting these bound states away, one can obtain finer estimates for the remaining part
$$
e^{-tH} P_c = \frac 1 {2\pi i} \int_0^\infty e^{-t\lambda} (R_V(\lambda+i0) - R_V(\lambda-i0)) \dd \lambda.
$$

\begin{proposition}[Heat kernel bounds]\lb{heat} Assume that $V \in \K_0$ and $H$ has no zero or positive energy bound states. If in addition $H$ has no negative energy bound states, then the perturbed heat kernel $e^{-tH}$ is dominated by the free heat kernel:
\be\lb{gauss1}
|e^{-tH}(x, y)| \les t^{-3/2} e^{-\frac{|x-y|^2}{4t}}.
\ee
If $H$ has negative eigenvalues, then $e^{-tH}P_c(x, y)$ can be decomposed into two parts, $K_1(t)$ and $K_2(t)$, such that, for every $\epsilon>0$, $K_1(t)$ satisfies the same Gaussian bound (\ref{gauss1}) with a constant of size $\epsilon^{-1}$ and $K_2(t)$ is dominated by a $t$-independent, exponentially decaying kernel $K_{2\epsilon} \in \B(L^p)$, $1 \leq p \leq \infty$, in a manner that also ensures uniform decay as $t \to \infty$:
\be\lb{gauss2}
|K_2(t)(x, y)| \les t^{-1} e^{-(\frac {\sqrt{-\lambda_1}} 2 - \epsilon) t} K_{2\epsilon}(x, y).
\ee
Here $\lambda_1$ is the eigenvalue of $H$ closest to $0$.

Finally, for $t \leq 1$, 
$$
|e^{-tH}(x, y)| \les t^{-3/2} e^{-\frac {|x-y|^2}{4t}}.
$$
The same is true for all $t>0$, but with an exponentially growing factor of $e^{-\lambda_N t}$ instead of $t^{-3/2}$, where $\lambda_N$ is the eigenvalue of $H$ farthest from $0$.
\end{proposition}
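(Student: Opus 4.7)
The plan mirrors the proof of Proposition \ref{poisson}, with the Poisson subordination formula replaced by its heat-kernel analog. Starting from the spectral representation
$$e^{-tH} P_c = \frac{1}{2\pi i}\int_0^\infty e^{-t\lambda}\bigl(R_V(\lambda+i0)-R_V(\lambda-i0)\bigr)\dd\lambda$$
and the identity $e^{-t\lambda} = \frac{1}{\sqrt{\pi t}}\int_0^\infty e^{-\tau^2/(4t)}\cos(\tau\sqrt\lambda)\dd\tau$, an integration by parts converting $\cos(\tau\sqrt\lambda)$ into $\sin(\tau\sqrt\lambda)/\sqrt\lambda$ yields the heat-kernel analog of (\ref{repr}):
$$e^{-tH}P_c(x,y) = \frac{1}{2\sqrt\pi\,t^{3/2}}\int_0^\infty \tau e^{-\tau^2/(4t)}\, T(\tau)(x,y)\dd\tau.$$
I would then follow the Poisson proof's structural split: choose $\delta\in(0,1)$ and write the kernel as $K_1(t)$ (the inside-and-near-cone part $\tau\geq\delta|x-y|$) plus $K_2(t)$ (the strictly-outside-cone part $\tau<\delta|x-y|$).

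For $K_1$, the uniform control $\sup_{x,y}\|\tau T(\tau)(x,y)\|_{\M_\tau}<\infty$ combined with $e^{-\tau^2/(4t)}\leq e^{-\delta^2|x-y|^2/(4t)}$ on the integration range gives
$$|K_1(t)(x,y)| \les t^{-3/2}e^{-\delta^2|x-y|^2/(4t)},$$
which for $\delta$ close to $1$ produces (\ref{gauss1}) with an $\epsilon^{-1}$-sized constant (absorbing the gap between $\delta^2$ and $1$). If $H$ has no bound states then (\ref{explicit}) forces $T(\tau)$ to vanish strictly outside the light cone, so $K_2\equiv 0$ and one may take $\delta=1$ to get the clean Gaussian bound (\ref{gauss1}) outright.

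For $K_2$, inserting (\ref{explicit}) gives
$$K_2(t)(x,y) = -\frac{1}{2\sqrt\pi\,t^{3/2}}\sum_{k=1}^N f_k(x)\otimes\ov{f_k}(y)\int_0^{\delta|x-y|}\tau e^{-\tau^2/(4t)}\frac{\sinh(\tau\mu_k)}{\mu_k}\dd\tau,$$
with $\mu_k=\sqrt{-\lambda_k}$. Writing $\sinh(\tau\mu_k)=\tfrac12(e^{\tau\mu_k}-e^{-\tau\mu_k})$ and completing the square $-\tau^2/(4t)+\tau\mu_k=-(\tau-2t\mu_k)^2/(4t)+t\mu_k^2$, I would estimate the $\tau$-integral in the two cases $2t\mu_k\leq\delta|x-y|$ (Gaussian peak inside the integration range, giving a Gaussian moment of size $(t+t^{3/2}\mu_k)e^{t\mu_k^2}$) and $2t\mu_k>\delta|x-y|$ (integrand monotone on the interval, controlled by the boundary value $\delta|x-y|\cdot e^{-\delta^2|x-y|^2/(4t)+\delta|x-y|\mu_k}$). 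Combining with the eigenfunction decay $|f_k(x)\ov{f_k(y)}| \les e^{-\mu_k(|x|+|y|)}/(\langle x\rangle\langle y\rangle)$ from (\ref{eigen}), the triangle inequality $|x|+|y|\geq|x-y|$ leaves net spatial exponential decay $e^{-(1-\delta)\mu_k(|x|+|y|)}$ (integrable in each variable, hence good in all $L^p$ once $\delta<1$), while the light-cone constraint $|x|+|y|\geq 2t\mu_k/\delta$ in the first case trades part of this spatial decay for temporal decay. After choosing $\delta$ in terms of $\epsilon$ and summing over the finitely many bound states (finite by Proposition \ref{finite}), this yields (\ref{gauss2}) with $K_{2\epsilon}(x,y)=\sum_k e^{-(1-\delta)\mu_k(|x|+|y|)}$.

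Finally, the full-kernel bounds follow by adding the finite-rank contribution $e^{-tH}P_p=\sum_k e^{-\lambda_k t}f_k\otimes\ov{f_k}$: for $t\leq 1$ this is trivially dominated by a Gaussian (bounded coefficients and exponentially decaying eigenfunctions), while for all $t>0$ it grows at worst like $e^{-\lambda_N t}$. The main technical obstacle is the $K_2$ step: obtaining the specific $e^{-(\sqrt{-\lambda_1}/2-\epsilon)t}$ decay rate requires a careful Legendre-type optimization in which part of the spatial decay $e^{-(1-\delta)\mu_k(|x|+|y|)}$ is traded for temporal decay via the inequality $|x|+|y|\gtrsim t\mu_k$ forced by the light cone, with the Gaussian growth $e^{t\mu_k^2}$ produced by completing the square being absorbed by the spatial eigenfunction decay at just the right rate.
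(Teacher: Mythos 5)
Your overall plan tracks the paper's proof closely: same spectral representation, same split into a light-cone region $K_1$ and a strictly-outside-cone region $K_2$, the same insertion of the explicit formula~(\ref{explicit}) for $T$ outside the cone, and the same completion-of-the-square / region-splitting analysis of the $K_2$ integral, followed by the trade-off between the eigenfunction decay and the light-cone constraint. However, there is one step where you transplanted an argument from the Poisson case that fails here, and it is worth flagging.

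You split the cone at $\tau=\delta|x-y|$ with $\delta<1$ and obtain $|K_1(t)(x,y)|\lesssim t^{-3/2}e^{-\delta^2|x-y|^2/(4t)}$, claiming this ``produces~(\ref{gauss1}) with an $\epsilon^{-1}$-sized constant (absorbing the gap between $\delta^2$ and $1$).'' That absorption is valid for the Poisson kernel, where the tails are polynomial: $(t^2+\delta^2 r^2)^{-2}\le \delta^{-4}(t^2+r^2)^{-2}$. It is \emph{false} for a Gaussian: $e^{-\delta^2 a}/e^{-a}=e^{(1-\delta^2)a}\to\infty$ as $a\to\infty$, so no prefactor $\epsilon^{-1}$ closes the gap. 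If you keep $\delta<1$ in the cone split, your $K_1$ satisfies a genuinely weaker Gaussian than~(\ref{gauss1}). The paper sidesteps this by splitting at exactly $\tau=|x-y|$ (no $\delta$ in the cone split), so $K_1$ gets the sharp Gaussian immediately; the parameter $\delta$ and the $\epsilon^{-1}$-sized constant instead appear \emph{inside} the $K_2$ analysis, where the region $|x-y|<2\delta t\sqrt{-\lambda_k}$ is estimated via $\erfc$ with a constant of size $(1-\delta)^{-1}$ and is regrouped into the Gaussian part, while the complementary region $|x-y|>2\delta t\sqrt{-\lambda_k}$ produces the exponential $t$-independent kernel. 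So the approach is sound, but the source of the $\epsilon^{-1}$ constant is misattributed, and as written your $K_1$ estimate does not match the claim.

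A secondary remark: the paper obtains the $t^{-3/2}\int_0^\infty\tau e^{-\tau^2/(4t)}T(\tau)\,d\tau$ representation by a direct change of variables in the spectral integral rather than the Poisson-subordination-plus-integration-by-parts route you sketch; both work, so this is only a stylistic difference. The paper also gives a second estimate of the $K_2$ integral, extending it to $[0,\infty)$ to produce $-e^{-tH}P_p$ and bounding the tail, which yields the more precise small-$t$ asymptotics and the final full-kernel bounds with the $e^{-\lambda_N t}$ factor; your sketch of the final bounds is essentially that idea but it should be spelled out since the tail integral still needs the erfc estimate.
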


%In fact, the kernel of $e^{-tH} P_c$ splits into a part that satisfies Gaussian bounds on the whole space and another given by the following explicit formula:
%$$\begin{aligned}
%G &= \sum_{k=1}^N g_k(|x-y|, t) f_k(x) \otimes \ov f_k(y) \\
%&= -\sum_{k=1}^N \frac 1 {2 \sqrt \pi t^{3/2}} \bigg(\int_0^{|x-y|} e^{-\frac{\tau^2}{4t}} \frac{\sinh(\tau \sqrt{-\lambda_k})}{\sqrt{-\lambda_k}} \tau \dd \tau\bigg) f_k(x) \otimes \ov f_k(y) \\
%&= - \sum_{k=1}^N \bigg(\frac 2 {\sqrt{\pi t}} e^{-\frac{|x-y|^2}{4t}} \frac{\sinh(|x-y|\sqrt{-\lambda_k})}{\sqrt{-\lambda_k}} + \\
%&+ e^{-\lambda_k t} \Big(1 - \frac 1 2 \erfc(\frac{|x-y|}{2\sqrt t} - \sqrt{-\lambda_k t}) - \frac 1 2 \erfc(\frac{|x-y|}{2\sqrt t} + \sqrt{-\lambda_k t})\Big)\bigg) f_k \otimes \ov f_k.
%\end{aligned}$$
%The extra component due to negative energy bound states needs a case-by-case analysis and satisfies different estimates, see below.

\begin{proof}
The continuous spectrum component of the heat kernel is given by
$$\begin{aligned}
e^{-tH} P_c (x, y)&= \frac 1 {2\pi i} \int_0^\infty e^{-t\lambda} (R_V(\lambda+i0) - R_V(\lambda-i0)) \dd \lambda \\
&= \frac 1 {\pi i} \int_{-\infty}^\infty \lambda e^{-t\lambda^2} R_V((\lambda+i0)^2) \dd \lambda\\
&= \frac 1 {2 \sqrt \pi t^{3/2}} \int_0^\infty \tau e^{-\frac{\tau^2}{4t}} T(\tau) (x, y) \dd \tau.
\end{aligned}$$
We again distinguish two regimes: $\tau \geq |x-y|$ and $0 \leq \tau < |x-y|$, corresponding to $K_1(t)$ and $K_2(t)$ respectively. In the first region
$$
t^{-3/2} \sup_{\tau \geq |x-y|} e^{-\frac{\tau^2}{4t}} \sup_{x, y} \|\tau T(\tau)(x, y)\|_{\M_\tau} \les t^{-3/2} e^{-\frac{|x-y|^2}{4t}}.
$$
%The norm $\sup_{x, y} \|\tau T(\tau)(x, y)\|_{\M_\tau} < \infty$ can be estimated by
%$$
%\sup_{x, y} \|\tau T(\tau)(x, y)\|_{\M_\tau} \leq \frac 1 {4\pi} \|V\|_{\K}^2 \|\|T\|_{\W(L^1, \K^*)}^2 = \frac 1 {4\pi} \|V\|_{\K}^2 \|\|T(\tau)\|_{\M_\tau}(x, y)\|_{\B(L^1, \K^*)}^2 < \infty,
%$$
%where the last quantity is controlled by Wiener's theorem, see \cite{becgol}.
%The derivative is $e^{-\tau^2/t}(1-2\tau^2/t)$, so the maximum is reached at $\tau^2=t/2$.

If there are no bound states then $T$ vanishes in the second region, strictly outside the light cone, so there is no contribution from it. Hence we obtain exactly the desired bound (\ref{gauss1}) for this case: the perturbed heat kernel is dominated by the free heat kernel.

If $H$ has bound states, then in the second region, outside the light cone, $T(\tau)$ is given by the explicit formula (\ref{explicit}).
%: for $|x-y|>\tau$
%$$
%T(\tau)(x, y) = - \chi_{|x-y|>\tau} \sum_{k=1}^N \frac{\sinh(\tau \sqrt{-\lambda_k})}{\sqrt{-\lambda_k}} f_k(x) \otimes \ov f_k(y).
%$$
%%This has the uniform bound $\langle x \rangle^{-1} \langle y \rangle^{-1}$ and is zero when $H \geq 0$.
For each $k$ we need to estimate
\be\lb{comp2}
t^{-3/2} \int_0^{|x-y|} \tau e^{-\frac{\tau^2}{4t}} \frac{\sinh(\tau \sqrt{-\lambda_k})}{\sqrt{-\lambda_k}} \dd \tau.
\ee
%A rough bound for this expression is
%$$
%t^{-3/2} \int_0^{\delta |x-y|} \tau^2 e^{-\frac{\tau^2}{4t}} \dd \tau \sup_{\tau<\delta|x-y|}\cosh(\tau\sqrt{-\lambda_k}).
%$$
%In turn, this is dominated by $\min(1, t^{-3/2} \delta^3 |x-y|^3) e^{\delta|x-y|\sqrt{\lambda_k}}$. Plugging these estimates into the full expression of $K_2(t)$ yields the main bound, for fixed $\delta<1$.

We do it as follows:
$$\begin{aligned}
(\ref{comp2})&=\frac 2 {t^{1/2} \sqrt{-\lambda_k}} \int_{-\frac {|x-y|}{2t^{1/2}}}^{\frac {|x-y|}{2t^{1/2}}} \tau e^{-\tau^2} e^{2\tau t^{1/2} \sqrt{-\lambda_k}}\dd \tau\\
&= \frac {2 e^{-\lambda_k t}} {t^{1/2} \sqrt{-\lambda_k}} \int_{-t^{1/2}\sqrt{-\lambda_k}-\frac {|x-y|}{2t^{1/2}}}^{-t^{1/2}\sqrt{-\lambda_k} + \frac {|x-y|}{2t^{1/2}}} (\tau + t^{1/2} \sqrt{-\lambda_k}) e^{-\tau^2} \dd \tau.
\end{aligned}$$
We distinguish two regimes, according to whether $-t^{1/2}\sqrt{-\lambda_k} + \frac {|x-y|}{2t^{1/2}}$ is negative (in which case there are further gains) or not. This is equivalent to asking whether $|x-y|<2t\sqrt{-\lambda_k}$ or not.

Assuming $|x-y|< 2\delta t\sqrt{-\lambda_k}$, where $\delta<1$, we use the following approach: on the integration interval $|\tau + t^{1/2} \sqrt{-\lambda_k}| \leq \frac {|x-y|}{t^{1/2}}$, then we estimate the integral using $\erfc$, the complementary error function. Then (\ref{comp2}) is bounded by
$$
\frac {e^{-t\lambda_k}} {t^{1/2} \sqrt{-\lambda_k}} \frac {|x-y|}{t^{1/2}} \erfc(t^{1/2}\sqrt{-\lambda_k} - \frac {|x-y|}{2t^{1/2}}) \les (1-\delta)^{-1} t^{-3/2} (-\lambda_k)^{-1} e^{-\frac {|x-y|^2}{4t}} e^{|x-y|\sqrt{-\lambda_k}} |x-y|,
$$
as $\erfc a \les e^{-a^2}/a$. Considering that
$$
|f_k(x) \otimes \ov f_k(y)| \les \frac {e^{-\sqrt{-\lambda_k}(|x|+|y|)}}{\langle x \rangle \langle y \rangle},
$$
we obtain a sharp Gaussian bound for $K_2(t)$ in this region.

On the other hand, when $|x-y|>2\delta t \sqrt{-\lambda_k}$, we can only bound the integral by $\frac {|x-y|}{t^{1/2}}$, $\frac {|x-y|^2}{t}$, or similar. Then
$$
(\ref{comp2}) \les t^{-1} e^{-\lambda_k t} (-\lambda_k)^{-1/2} |x-y|.
$$
Clearly, in this regime we cannot obtain Gaussian bounds, because the eigenfunctions only have exponential decay in $x$ and (\ref{comp2}) does not provide it either.

However, reintroducing $f_k(x) \otimes \ov f_k(y)$, we get a bound of
$$
t^{-1} e^{-\lambda_k t - \sqrt{-\lambda_k}(|x|+|y|)} \les t^{-1} e^{-(1-\frac 1 {2\delta})\sqrt{-\lambda_k}(|x|+|y|)}.
$$
Taking $\delta>1/2$ and summing over $k$ we obtain a uniform bound for large $t$, in terms of the $L^p$-bounded operator
$$
K_{20}=e^{-(1-\frac 1 {2\delta})\sqrt{-\lambda_1}(|x|+|y|)},
$$
where $\lambda_1$ is the eigenvalue of $H$ closest to $0$.

Giving up some exponential decay in $x$ and $y$, it is easy to see that $t^{-1}$ can be replaced by $t^{-1} e^{-(\frac {\sqrt{-\lambda_1}} 2-\epsilon) t}$ for any $\epsilon>0$, proving (\ref{gauss2}).

Another way of evaluating (\ref{comp2}), which yields a more precise estimate for small $t$, is by computing the integral in (\ref{comp2}) on $[0, \infty)$, then estimating the difference.

The integral on $[0, \infty)$ leads to nothing but $-e^{-tH} P_p$, after summing over $k$, while the difference is
\be\lb{comp3}
\frac 1 {2t^{3/2} \sqrt{-\lambda_k}} \int_{\R \setminus (-|x-y|, |x-y|)} \tau e^{-\frac {\tau^2}{4t}} e^{\tau \sqrt{-\lambda_k}} \dd \tau.
\ee
Since $-e^{-tH} P_p$ grows exponentially and $K_2(t)$ is uniformly bounded, their difference (\ref{comp3}) will also grow with $t$.

By successive changes of variable (\ref{comp3}) becomes
$$\begin{aligned}
\frac {2 e^{-\lambda_k t}} {t^{1/2} \sqrt{-\lambda_k}} \int_{\R \setminus (-t^{1/2}\sqrt{-\lambda_k}-\frac {|x-y|}{2t^{1/2}}, -t^{1/2}\sqrt{-\lambda_k} + \frac {|x-y|}{2t^{1/2}})} (\tau + t^{1/2} \sqrt{-\lambda_k}) e^{-\tau^2} \dd \tau.
\end{aligned}$$
This is bounded by
\be\lb{inte2}
\frac {e^{-\lambda_k t}} {t^{1/2} \sqrt{-\lambda_k}} (e^{-(\frac {|x-y|}{2t^{1/2}}-t^{1/2}\sqrt{-\lambda_k})^2} + t^{1/2} \sqrt{-\lambda_k} \erfc(\frac {|x-y|}{2t^{1/2}}-t^{1/2}\sqrt{-\lambda_k})).
\ee
Simplifying the first term in (\ref{inte2}) leads to $(-\lambda_k)^{-1/2} t^{-1/2} e^{-\frac {(x-y)^2}{4t}} e^{|x-y|\sqrt{-\lambda_k}}$. This satisfies Gaussian bounds, if we include $f_k(x) \otimes \ov f_k(y)$. However, we got a factor of $t^{-1/2}$ instead of $t^{-3/2}$, which is better for small $t$.

On the other hand, $\erfc$ in the second term in (\ref{inte2}) can be as large as $1$, in the region where $|x-y| \leq 2t \sqrt{-\lambda_k}$.

Assuming that $\frac {|x-y|} {2t^{1/2}} - t^{1/2} \sqrt{-\lambda_k} \geq \delta t^{1/2} \sqrt{-\lambda_k}$, where $\delta>0$, or equivalently that $|x-y| \geq 2(1+\delta)t\sqrt{-\lambda_k}$, the second term satisfies the same Gaussian bound as the first term (up to a factor of $\delta^{-1}$), as $\erfc a \les e^{-a^2}/a$.

We are then left with the region where $|x-y|<2(1+\delta)t\sqrt{-\lambda_k}$, in which $\erfc$ can be as large as $1$. Here the second term is bounded by $e^{-\lambda_k t}$.

However, in this region $\frac {|x-y|} {2(1+\delta)t} < \sqrt{-\lambda_k}$, so $\frac {|x-y|^2}{2(1+\delta) t} < \sqrt{-\lambda_k} |x-y|$ and
$$
e^{-\frac {|x-y|^2}{2(1+\delta)t}} > e^{-\sqrt{\lambda_k}|x-y|} \geq e^{-\sqrt{\lambda_k}(|x|+|y|)}.
$$
Taking into account $f_k(x) \otimes \ov f_k(y)$ and (\ref{eigen}), the contribution of the second term in (\ref{inte2}) in this region is then bounded by $e^{-\lambda_k t} e^{-\frac {|x-y|^2}{2(1+\delta)t}} \langle x \rangle^{-1} \langle y \rangle^{-1}$.

In other words, this expression satisfies Gaussian bounds in space, for a better Gaussian (roughly the square of the original), but with a factor of $e^{-\lambda_k t}$ (as claimed) instead of $t^{-3/2}$. This is better when $t$ is small and worse when $t$ is large.

Due to the estimates we obtained for (\ref{comp3}), which are better than Gaussian for small $t$, the difference $K_2(t) -(-e^{-tH}P_p)$ goes to $0$ as $t \to 0$.
\end{proof}

\section{Bochner--Riesz means}

Bochner--Riesz means correspond to the spectral multipliers $M_\alpha(\lambda) = (1-\lambda/\lambda_0)_+^\alpha = \max(1-\lambda/\lambda_0, 0)^\alpha$. In the free case, due to the scaling symmetry, $\|(I+\Delta/\lambda_0)_+^\alpha\|_{\B(L^p)}= 1$ is constant with respect to $\lambda_0$ when this operator is bounded, but not in the perturbed case.

In general, the point spectrum component is given by
$$
(\lambda_0-H)_+^\alpha P_p= \sum_{k=1}^N (1-\lambda_k/\lambda_0)^\alpha f_k \otimes \ov f_k.
$$
Its $B(L^p)$ norm is bounded when $\lambda_0 \to +\infty$, but blows up as $\lambda_0 \to 0$.

The exponent $\alpha$ need not be real; in fact, we need $\alpha \in \C$ for complex interpolation.

\begin{proposition}[Bochner--Riesz means]\lb{BR} Assume that $V \in \K_0$ and $H$ has no zero or positive energy bound states. Then $(I - H/\lambda_0)_+^\alpha P_c$ has the same $L^p$ boundedness properties as the fractional integration operator $J_{(1-a)/2}$, where $a=\re \alpha$: $(I - H/\lambda_0)_+^\alpha P_c \in \B(L^p, L^q)$ if $\frac 1 p - \frac 1 q \geq \frac {1-a} 3$ and $-1<a<1$.

In addition, $(I - H/\lambda_0)_+^\alpha P_c \in \B(L^p)$ whenever $|\frac 1 p - \frac 1 2|<\frac a 2$, $p \in [1, \infty]$.
\end{proposition}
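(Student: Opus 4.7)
The plan is to mimic the strategy of Propositions~\ref{poisson} and~\ref{heat}, representing the Bochner--Riesz operator as an integral of the forward wave propagator $T(\tau)$ against an explicit Bessel-type kernel. Starting from Stone's formula
\[
(1-H/\lambda_0)_+^\alpha P_c = \frac{1}{2\pi i}\int_0^{\lambda_0}(1-\lambda/\lambda_0)^\alpha [R_V(\lambda+i0)-R_V(\lambda-i0)]\,d\lambda,
\]
substituting $\lambda=\mu^2$, and using the cosine transform identity $\int_0^1(1-\mu^2)^\alpha\cos(c\mu)\,d\mu=\tfrac{\sqrt\pi}{2}\Gamma(\alpha+1)(c/2)^{-\alpha-1/2}J_{\alpha+1/2}(c)$ followed by an integration by parts via $\partial_\tau[\tau^{-\nu}J_\nu(\tau)]=-\tau^{-\nu}J_{\nu+1}(\tau)$, one arrives at a working representation
\[
(1-H/\lambda_0)_+^\alpha P_c(x,y) = c_\alpha\,\lambda_0^{(1+\alpha)/2}\int_0^\infty \Phi_\alpha(\sqrt{\lambda_0}\,\tau)\,T(\tau)(x,y)\,d\tau,\qquad \Phi_\alpha(\tau):=\frac{J_{\alpha+3/2}(2\pi\tau)}{\tau^{\alpha+1/2}}.
\]
Here $\Phi_\alpha$ is uniformly bounded, vanishes linearly at the origin, and oscillates with pointwise envelope $|\Phi_\alpha(\tau)|\lesssim\tau^{-a-1}$ for $\tau\ge1$, where $a=\re\alpha$.

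Next, as in the Poisson and heat kernel proofs, split the $\tau$-integral at $\tau=|x-y|$ into an inside-cone piece $K_1$ (with $\tau\ge|x-y|$) and an outside-cone piece $K_2$ (with $\tau<|x-y|$). For $K_1$, combine the envelope $|\Phi_\alpha(\tau)/\tau|\lesssim\min(1,\tau^{-a-2})$ with $\|\tau T(\tau)(x,y)\|_{\M_\tau}\lesssim 1$ to obtain $|K_1(x,y)|\lesssim|x-y|^{-a-2}$ for $-1<a<1$, matching the pointwise kernel bound of $J_{(1-a)/2}$. For $K_2$, substitute the explicit expression~(\ref{explicit}) for $T(\tau)$ outside the light cone: each eigenstate contribution takes the form
\[
\Bigl(\int_0^{|x-y|}\Phi_\alpha(\tau)\,\frac{\sinh(\tau\sqrt{-\lambda_k})}{\sqrt{-\lambda_k}}\,d\tau\Bigr)\,f_k(x)\,\ov{f_k(y)},
\]
and the polynomial growth of the bracket is absorbed by the eigenfunction decay~(\ref{eigen}) of $f_k(x)\ov{f_k(y)}$ after restricting the outside-cone region to $\tau\le\delta|x-y|$ with $\delta<1$, exactly as in the Poisson proof. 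The result is that $K_2$ is dominated by a fixed exponentially decaying kernel that is $L^p$-bounded for every $p\in[1,\infty]$.

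With the pointwise bound $|(1-H/\lambda_0)_+^\alpha P_c(x,y)|\lesssim|x-y|^{-a-2}$ plus an $L^p$-bounded remainder, the first claim follows from the Hardy--Littlewood--Sobolev inequality (equivalently, Young's inequality with weak $L^{3/(a+2)}$) in the same $(p,q)$ range as for $J_{(1-a)/2}$. For the second claim, apply Stein's complex interpolation theorem to the analytic family $\alpha\mapsto(1-H/\lambda_0)_+^\alpha P_c$: on the line $\re\alpha=0$ the symbol has modulus at most $1$ on $\sigma(H)$, so the operator is $L^2$-bounded with polynomial growth in $|\im\alpha|$; on the line $\re\alpha=1$ the Bessel factor $\Phi_\alpha$ is in $L^1(d\tau)$ and the same kernel analysis, together with the first claim at the endpoint, yields $L^p\to L^p$ boundedness for all $p\in[1,\infty]$. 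Interpolating between these two lines produces the diagonal range $|1/p-1/2|<a/2$ for $0<a<1$. The main obstacle is the sharp pointwise estimate on $K_1$ near the light cone, $\tau\approx|x-y|$, where $T(\tau)$ is distributional (a surface delta in the free case) and $\Phi_\alpha$ has not yet entered its decay regime; handling this region requires exploiting the oscillation of $J_{\alpha+3/2}$ via an additional integration by parts matching the Bessel asymptotics, rather than relying solely on the pointwise envelope.
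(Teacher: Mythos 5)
Your proposal follows the paper's argument almost line for line: the Stone-formula representation in terms of the wave propagator $T(\tau)$ against a Bessel kernel $J_{\alpha+3/2}(\lambda_0^{1/2}\tau)/(\lambda_0^{1/2}\tau)^{\alpha+1/2}$, the split at $\tau\approx\delta|x-y|$ with $\delta<1$, the inside-cone estimate via $\sup_{x,y}\|\tau T(\tau)\|_{\M_\tau}$ paired with the Bessel envelope $\min(1,\tau^{-a-2})$, the outside-cone piece handled by the explicit formula~(\ref{explicit}) with eigenfunction decay~(\ref{eigen}) absorbing the growth, and finally HLS/Young for the first claim and complex interpolation against the $L^2$-bounded line $\re\alpha=0$ for the second. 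Two small corrections are in order. First, the ``main obstacle'' you flag at the end is illusory: the estimate you already wrote down --- $\sup_{\tau\geq\delta|x-y|}|\Phi_\alpha(\sqrt{\lambda_0}\tau)/\tau|$ times the finite $\M_\tau$ norm of $\tau T(\tau)$ --- is precisely the paper's argument and needs no refinement, since the distributional singularity of $T(\tau)$ on the light cone is already absorbed into the $\M_\tau$ norm and the Bessel envelope is monotone enough that the sup is attained at $\tau=\delta|x-y|$; no further integration by parts is required. Second, your interpolation endpoint $\re\alpha=1$ is slightly off: at $a=1$ the kernel bound $\min(|x-y|^{-1},|x-y|^{-2-a})=\min(|x-y|^{-1},|x-y|^{-3})$ is only weak-$L^1$, not $L^1$, so you do not get $L^p\to L^p$ boundedness there. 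The paper interpolates instead with lines $\re\alpha=a_1>1$, where the same kernel bound \emph{is} integrable, which is harmless because the target range $|1/p-1/2|<a/2$ is open.
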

\begin{proof}
Recall the following facts about Bessel functions: for $a=\re \alpha>-1$
$$\begin{aligned}
\mc F^{-1} (1-\lambda^2)_+^\alpha &= \frac {\sqrt{2\pi} 2^{\alpha} \Gamma(\alpha+1) J_{\alpha+1/2}(\tau)}{\tau^{\alpha+1/2}},\\
\mc F^{-1} (1-\lambda^2/\lambda_0)_+^\alpha &= \frac {\sqrt{2\pi \lambda_0} 2^{\alpha} \Gamma(\alpha+1) J_{\alpha+1/2}(\lambda_0^{1/2}\tau)}{(\lambda_0^{1/2} \tau)^{\alpha+1/2}},\\
|J_\alpha(\tau)| &\les \min(\tau^{a}, e^{\frac \pi 2 |\im \alpha|} \tau^{-1/2}),
\end{aligned}$$
and for any $\alpha \in \C$ $\partial_\tau (J_\alpha(\tau)/\tau^\alpha) = -J_{\alpha+1}(\tau)/\tau^\alpha$. We shall ignore the $e^{\frac \pi 2 |\im \alpha|}$ factor, except to note that it behaves well under complex interpolation.

The continuous spectrum component of the Bochner-Riesz sum is given by
$$\begin{aligned}
(I-H/\lambda_0)_+^\alpha P_c (x, y)&= \frac 1 {2\pi i} \int_0^\infty (\lambda_0-\lambda)_+^\alpha (R_V(\lambda+i0) - R_V(\lambda-i0)) \dd \lambda \\
&= \frac 1 {\pi i} \int_{-\infty}^\infty \lambda (\lambda_0-\lambda^2)_+^\alpha R_V((\lambda+i0)^2) \dd \lambda\\
&= \sqrt{2\pi \lambda_0} 2^{\alpha+1} \Gamma(\alpha+1) \int_0^\infty \frac {J_{\alpha+3/2}(\lambda_0^{1/2} \tau)}{(\lambda_0^{1/2} \tau)^{\alpha+1/2}} T(\tau) (x, y) \dd \tau.
\end{aligned}$$

The integral splits into two regions, according to whether $\delta |x-y| \leq \tau$ or $0 \leq \tau < \delta|x-y|$, where we fix $\delta<1$. In the first region
$$\begin{aligned}
\bigg|\lambda_0^{1/2} \int_{\delta|x-y|}^\infty \frac {J_{\alpha+3/2}(\lambda_0^{1/2} \tau)}{(\lambda_0^{1/2}\tau)^{\alpha+1/2}} T(\tau) (x, y) \dd \tau \bigg|
&\leq \lambda_0 \sup_{\delta |x-y| \leq \tau} \frac {J_{\alpha+3/2}(\lambda_0^{1/2}\tau)}{(\lambda_0^{1/2} \tau)^{a+3/2}} \sup_{x, y} \|\tau T(\tau)(x, y)\|_{\M_\tau} \\
&\les \lambda_0 \min(1, (\lambda_0^{1/2} \delta |x-y|)^{-2-a}).
\end{aligned}$$
This expression can be further bounded by e.g. $\lambda_0^{-a/2} \delta^{-2-a} |x-y|^{-2-a}$, so is in $L^{\frac 3 {2+a}, \infty} \cap L^\infty$ with a norm that depends on $\lambda_0$. Another possibility is
$$
\lambda_0 \min(1, (\lambda_0^{1/2} \delta |x-y|)^{-2-a}) \les \lambda_0 (\lambda_0^{1/2} \delta |x-y|)^{-2} = \delta^{-2} |x-y|^{-2},
$$
independently of $\alpha$ and $\lambda_0$, when $a \geq 0$.

Finally, another bound is
$$
\lambda_0^{3/2} \min((\lambda_0^{1/2} \delta |x-y|)^{-1}, (\lambda_0^{1/2} \delta |x-y|)^{-2-a}),
$$
which is a rescaled version (leaving the $L^1$ norm invariant) of $\min((\delta |x-y|)^{-1}, (\delta |x-y|)^{-2-a})$, which is in $L^1$ when $a>1$.

In the second region, outside the light cone, $T$ is given by (\ref{explicit}). For each $k$ we evaluate
\be\lb{comp_BR}
\lambda_0^{1/2} \int_0^{\delta|x-y|} \frac {J_{\alpha+3/2}(\lambda_0^{1/2} \tau)}{(\lambda_0^{1/2} \tau)^{\alpha+1/2}} \frac {\sinh(\tau \sqrt{-\lambda_k})}{\sqrt{-\lambda_k}} \dd \tau.
\ee
A rough estimate gives
$$
(\ref{comp_BR}) \les \lambda_0 \sup_{\tau \leq \delta|x-y|} \frac {J_{\alpha+3/2}(\lambda_0^{1/2} \tau)}{(\lambda_0^{1/2} \tau)^{\alpha+3/2}} \sup_{\tau \leq \delta|x-y|} \frac {\tau \sinh(\tau \sqrt{-\lambda_k})}{\sqrt{-\lambda_k}} \les \lambda_0 (-\lambda_k)^{-1/2} \delta|x-y| e^{\delta |x-y| \sqrt{-\lambda_k}}
$$
on one hand and
$$\begin{aligned}
(\ref{comp_BR}) &\les \lambda_0^{-a/2} \sup_{\tau \leq \delta|x-y|} |(\lambda_0^{1/2} \tau)^{1/2} J_{\alpha+3/2}(\lambda_0^{1/2} \tau)| \int_0^{\delta|x-y|} \frac {\sinh(\tau \sqrt{-\lambda_k})}{\tau^{\alpha+1} \sqrt{-\lambda_k}} \dd \tau \\
&\les \lambda_0^{-a/2} |x-y|^{1-a} e^{\delta|x-y| \sqrt{-\lambda_k}}
\end{aligned}$$
for $a<1$ on the other hand. These two powers of $\lambda_0$ are the two extremes we got when evaluating the contribution of the first region.

Between these two extremes, for $a \geq 0$
$$\begin{aligned}
(\ref{comp_BR}) &\les \sup_{\tau \leq \delta|x-y|} \bigg|\frac {J_{\alpha+3/2}(\lambda_0^{1/2} \tau)} {(\lambda_0^{1/2} \tau)^{\alpha-1/2}}\bigg| \int_0^{\delta|x-y|} \frac {\sinh(\tau \sqrt{-\lambda_k})}{\tau \sqrt{-\lambda_k}} \dd \tau \les |x-y| e^{\delta|x-y| \sqrt{-\lambda_k}}
\end{aligned}$$
independently of $\lambda_0$.

Reintroducing the exponentially decaying $f_k(x) \otimes \ov f_k(y)$ factor bounded by (\ref{eigen}), overall the contribution of the second region is bounded by $e^{-\sqrt{-\lambda_k}(|x|+|y|-\delta|x-y|)}$. This integral kernel is bounded on all Lebesgue spaces.

If $-1<a<1$, the integral kernel of $(I-H/\lambda_0)_+^\alpha P_c$ is in $L^{\frac 3 {2+a}, \infty} \cap L^\infty$, so it satisfies fractional integration bounds. If $a>1$, we obtain that the kernel is bounded by an integrable function of $x-y$, so it is $L^p$-bounded, $1 \leq p \leq \infty$. Note that the bound does not depend on $\lambda_0$. By interpolation with the line $a=0$, where the operator is $L^2$-bounded, we obtain the general conclusion.
\end{proof}

\appendix
\section{The Kato class}\lb{a}
%Our proof requires that $V \in \K$, with $L^{3/2}$ or $L^{3/2, q}$, $q>1$, being too weak. Potentials $V \in L^{3/2, 1} \subset \K$ arise naturally when linearizing semilinear equations.
Here we include some proofs of properties of the Kato class $\K$ and of $\K_0=\ov{\D}_\K$.

\subsection{Local integrability and the local and distal Kato properties}

\begin{lemma}\lb{modkato} For $V \in \K$, the modified distal Kato property implies the distal Kato property.
\end{lemma}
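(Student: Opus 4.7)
My plan is to start from a given $\epsilon > 0$ and use the modified distal Kato property once, then use finiteness of $\|V\|_\K$ to handle what remains. Specifically, I would fix $R_0$ large enough that
$$
\sup_{y \in \R^3} \int_{|x| > R_0} \frac{|V(x)| \dd x}{|x-y|} < \epsilon/2,
$$
and then, for a larger threshold $R$ to be chosen, split the region of integration:
$$
\{|x-y| > R\} = \bigl(\{|x-y|>R\} \cap \{|x|>R_0\}\bigr) \cup \bigl(\{|x-y|>R\} \cap \{|x|\leq R_0\}\bigr).
$$
The contribution of the first piece is bounded by $\epsilon/2$ uniformly in $y$, directly from the choice of $R_0$.

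For the second piece, I would exploit two facts simultaneously: on that region $|x-y| > R$, so $1/|x-y| < 1/R$, and on the ball $|x| \leq R_0$ the Kato norm controls the total mass of $|V|$. The latter is the standard observation that, taking $y = 0$ in the definition of $\|V\|_\K$,
$$
\int_{|x|\leq R_0} |V(x)| \dd x \leq R_0 \int_{|x|\leq R_0} \frac{|V(x)| \dd x}{|x|} \leq R_0 \|V\|_\K,
$$
since $|x| \leq R_0$ gives $1 \leq R_0/|x|$. Therefore
$$
\int_{\{|x-y|>R\} \cap \{|x|\leq R_0\}} \frac{|V(x)| \dd x}{|x-y|} \leq \frac{1}{R} \int_{|x|\leq R_0} |V(x)| \dd x \leq \frac{R_0 \|V\|_\K}{R},
$$
which is less than $\epsilon/2$ once $R > 2 R_0 \|V\|_\K/\epsilon$. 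Adding the two pieces yields the distal bound, uniformly in $y$.

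I do not expect any real obstacle; the only point worth noting is that this argument works verbatim for signed measures $V \in \M_{loc}$ by replacing $|V(x)| \dd x$ with $\dd|V|(x)$, since Kato class measures have no point mass at the origin. The genuine content of the statement is the asymmetry pointed out in the excerpt afterwards, namely that the converse fails: lacunary bumps at polynomially growing distances give distal but not modified distal examples, and my argument does not reverse because there is no way to recover a uniform-in-$y$ decay condition phrased in terms of $|x|$ from one phrased in terms of $|x-y|$.
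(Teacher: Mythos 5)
Your proof is correct, and it takes a genuinely different route from the paper's. The paper splits on the size of $|y|$: for $|y|$ large it shows the \emph{entire} integral $\int_{\R^3} |V(x)|\,dx/|x-y|$ is small, using dominated convergence for the compact piece $\{|x|\leq R_0\}$ and the modified distal bound for the rest, and only for $|y|$ bounded does it use the containment $\{|x-y|>R\}\subset\{|x|>R_0\}$. You instead decompose the tail region $\{|x-y|>R\}$ itself into $\{|x|>R_0\}$ and $\{|x|\leq R_0\}$ uniformly in $y$, handling the second piece with the elementary Frostman-type bound $\int_{|x|\leq R_0}|V|\leq R_0\|V\|_\K$ (equation~(\ref{frostman}) in the paper) together with $1/|x-y|<1/R$. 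This avoids dominated convergence altogether and produces an explicit threshold $R>2R_0\|V\|_\K/\epsilon$, so your argument is both more elementary and quantitative; the paper's is a soft compactness-style argument. One minor redundancy in your closing remark: the absence of a point mass at the origin is automatic from $\|V\|_\K<\infty$ and is not actually needed for the pointwise inequality $1\leq R_0/|x|$ that drives the Frostman bound.
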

\begin{proof} Assume that $V$ satisfies the modified distal Kato property and let $R_0$ be a radius such that $\|\chi_{|x|>R_0} V(x)\|_\K < \epsilon$.

By dominated convergence,
$$
\lim_{y \to \infty} \int_{|x|\leq R_0} \frac {|V(x)| \dd x}{|x-y|} = 0.
$$
It follows that for all sufficiently large $y$, say $|y|>R_1$,
$$
\int_{\R^3} \frac {|V(x)| \dd x}{|x-y|} < 2\epsilon.
$$
Finally, for $|y|<R_1$ and any $R>R_1+R_0$
$$
\int_{|x-y|>R} \frac {|V(x)| \dd x}{|x-y|} \leq \|\chi_{|x|>R_0} V(x)\|_\K < \epsilon.
$$
Therefore, for any $y$ and any $R>R_1+R_0$ the integral is less than $2\epsilon$, meaning that it vanishes uniformly in $y$ as $R \to \infty$.
\end{proof}

For $V \in \K$,
\be\lb{frostman}
\int_{|x-y| \leq R} |V(x)| \dd x \leq R \|V\|_\K,
\ee
though this condition is not sufficient for $V$ to be in $\K$.

In addition, if $V$ has the local Kato property then
$$
\lim_{R \to 0} R^{-1} \int_{|x-y| \leq R} |V(x)| \dd x = 0
$$
and if $V$ also has the distal Kato property then the same is true as $R \to \infty$. This further justifies the comparison to $VMO$ versus $BMO$.

A partial converse is as follows: let
$$
g(R)=R^{-1} \sup_y \int_{|x-y| \leq R} |V(x)| \dd x,
$$
which is bounded for $V \in \K$. Conversely, if $g(R)/R \in L^1(0, \infty)$ or equivalently $g(2^k) \in \ell^1_k$, then $V \in \K$ and it has the local Kato property.

This sufficient, but not necessary, condition shows that $V \in \K$ need not belong to $L^p_{loc}$ with $p>1$. On the other hand, $\K_0 \subset L^1_{loc}$. This raises the issue of the singular measures in $\K$ --- which, although not belonging to $\K_0$, may still be in some wider suitable class of potentials.

By definition, the Kato class is included in $\Delta L^\infty \subset \dot W^{-2, BMO}$, which is a non-separable Banach space of tempered distributions. We consider the spaces $\Delta C_0$ and $(\Delta L^\infty)_0=\ov{\D}_{\Delta L^\infty}$, the closure of the class of test functions in $\dot W^{-2, \infty}$, which are separable. Convergence in $\dot W^{-2, \infty}$ is weaker than norm convergence for $\K$.

\begin{lemma} $\ov \D_{\Delta L^\infty} \subset \Delta C_0$. If $V \geq 0$, $V \in \K \cap \Delta C_0$ is equivalent to $V \in \K \cap \ov{\D}_{\Delta L^\infty}$.
\end{lemma}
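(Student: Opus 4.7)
For the first inclusion, I would proceed directly from the definition. Given $V \in \ov{\D}_{\Delta L^\infty}$, fix $\varphi_n \in \D$ with $\varphi_n \to V$ in $\Delta L^\infty$, and take the Newtonian-potential representative
$$
u_n(x) = -\frac{1}{4\pi}\int_{\R^3}\frac{\varphi_n(y)}{|x-y|}\,dy,
$$
which lies in $C_0$ because $\varphi_n \in C^\infty_c$. The Cauchy property in $\Delta L^\infty$ furnishes constants $c_n$ so that $u_n + c_n$ is Cauchy in $L^\infty$. Since $u_n(x), u_m(x) \to 0$ as $|x| \to \infty$, evaluating $|(u_n+c_n) - (u_m+c_m)|$ at large $|x|$ forces $c_n$ itself to be Cauchy, say $c_n \to c$. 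Then $u_n \to u - c$ in $L^\infty$ for some $u \in L^\infty$ with $\Delta u = V$, and since $C_0$ is closed in $L^\infty$, the limit $u-c$ lies in $C_0$; thus $V = \Delta(u-c) \in \Delta C_0$.

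For the second statement, the implication $V \in \K \cap \ov{\D}_{\Delta L^\infty} \Rightarrow V \in \K \cap \Delta C_0$ is immediate from the first part. For the reverse, I propose a mollify-and-cutoff argument that in fact does not use $V \geq 0$, so the equivalence holds verbatim for all real-valued $V$. Writing $V = \Delta u$ with $u \in C_0$, fix a standard nonnegative mollifier $\rho_\epsilon$ supported in $B(0,\epsilon)$ and a smooth cutoff $\chi_R$ equal to $1$ on $B(0,R)$, supported in $B(0,2R)$, and bounded by $1$. Put
$$
u_{\epsilon,R} := \chi_R(u * \rho_\epsilon) \in C^\infty_c, \qquad \varphi_{\epsilon,R} := \Delta u_{\epsilon,R} \in \D.
$$
Since $u \in C_0$ extends continuously to the one-point compactification of $\R^3$, it is uniformly continuous, so $u * \rho_\epsilon \to u$ uniformly as $\epsilon \to 0$; and since $u$ vanishes at infinity, $\chi_R u \to u$ in $L^\infty$ as $R \to \infty$. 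A triangle inequality then gives $\|u_{\epsilon,R} - u\|_\infty \to 0$, whence
$$
\|\varphi_{\epsilon,R} - V\|_{\Delta L^\infty} \leq \|u_{\epsilon,R} - u\|_\infty \to 0,
$$
so $V \in \ov{\D}_{\Delta L^\infty}$.

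The one technical point I expect to be delicate is the ambiguity of $\Delta^{-1}$ modulo constants on $\dot W^{-2,\infty}$, which enters in the first step through the unknown sequence of constants $c_n$. The key observation is that by selecting the Newtonian representatives we place the $u_n$ in $C_0$ from the start, so their decay at infinity pins down the $c_n$ and forces them to converge. Beyond that subtlety, both halves of the lemma reduce to standard closure, uniform-continuity, and mollification facts for $C_0(\R^3)$.
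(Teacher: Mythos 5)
Your proof is correct, and both halves merit comment.

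For the first inclusion $\ov\D_{\Delta L^\infty}\subset\Delta C_0$, your argument is essentially the same as the paper's (apply $\Delta^{-1}$ and use that $C_0$ is closed in $L^\infty$), but you are more explicit about the one genuine subtlety — the ambiguity of $\Delta^{-1}$ modulo constants — and your resolution of it via the decay at infinity of the Newtonian representatives is exactly right. One could streamline slightly: since each $u_n$ vanishes at infinity, the estimate $|c|\le\|u_n-u_m+c\|_{L^\infty}$ (obtained by letting $|x|\to\infty$) shows $\|u_n-u_m\|_{L^\infty}\le 2\inf_c\|u_n-u_m+c\|_{L^\infty}=2\|\varphi_n-\varphi_m\|_{\Delta L^\infty}$, so $u_n$ is already Cauchy in $L^\infty$ with no auxiliary constants needed; but your version is equivalent.

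For the second half you take a genuinely different route, and yours is simpler and more general. The paper proves the implication $V\in\K\cap\Delta C_0\Rightarrow V\in\ov\D_{\Delta L^\infty}$ by first showing that a nonnegative $V\in\K$ with $\Delta^{-1}V\in C_0$ satisfies the modified distal Kato property (this is where $V\ge 0$ is used, to drop absolute values inside the Kato integral), then approximating $V$ in $\K$-norm by compactly supported cutoffs and mollifying those. Your approach bypasses the Kato norm entirely: you mollify and truncate the $C_0$ potential $u$ with $\Delta u=V$ directly, obtaining $L^\infty$-convergence $u_{\epsilon,R}\to u$ and hence $\Delta L^\infty$-convergence $\Delta u_{\epsilon,R}\to V$, which is all the lemma asks for. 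As you observe, this needs neither $V\ge 0$ nor $V\in\K$, so you in fact establish the clean identity $\Delta C_0=\ov\D_{\Delta L^\infty}$, from which the lemma's second sentence follows trivially by intersecting with $\K$. The trade-off is that the paper's longer route extracts the extra structural conclusion that such a $V$ satisfies the modified distal Kato property (i.e.\ approximation in the stronger $\K$-norm, not just in $\Delta L^\infty$), which is what the subsequent Lemma \ref{localdistal} and the Wiener-condition arguments actually consume; your proof, while sharper as a statement about $\Delta L^\infty$, does not by itself yield the $\K$-norm approximation used downstream.
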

\begin{proof} Let $\chi$ be a standard cutoff function.
%If $\Delta^{-1} V \in C$ is continuous, it can be approximated by its mollifications: $\epsilon^{-3} \chi(\cdot/\epsilon) \ast \Delta^{-1} V \to \Delta^{-1} V$ in $C \subset L^\infty$.
%
%But $\epsilon^{-3} \chi(\cdot/\epsilon) \ast \Delta^{-1} V = \Delta^{-1} [\epsilon^{-3} \chi(\cdot/\epsilon) \ast V]$, so $V$ is well-approximated by smooth functions in $\Delta L^\infty$.
%
%We do this on a compact set, on which any continuous function is uniformly continuous, then consider a covering of $\R^3$ by such sets.

If $V$ is approximated by $V_k \in C^\infty$ in $\Delta L^\infty$, then $\Delta^{-1} V$ is approximated by $\Delta^{-1} V_k \in C^\infty \subset C$ in $L^\infty$. But $C$ is closed in $L^\infty$, so $V \in \Delta C$. A similar argument shows that if $V$ is approximated by $V_k \in \D$ then $V \in \Delta C_0$.

Next, we consider decay at infinity, which requires extra conditions in one direction. If $(-\Delta)^{-1} V$ vanishes at infinity, then for any $\epsilon>0$ there exists $R_0$ such that
$$
\sup_{|y|>R_0} \bigg|\int_{\R^3} \frac {V(x) \dd x}{|x-y|}\bigg| < \epsilon.
$$
If $V \in \K$ and $|y|>R$, then
$$
\int_{\R^3} \frac {\chi_{|x| \leq R} |V(x)| \dd x}{|x-y|} \leq \frac 1 {|y|-R} \|\chi_{|x| \leq R} V(x)\|_\M \leq \frac R {|y|-R} \|V\|_K.
$$
Then for sufficiently large $|y|>R_1=R(1+\epsilon^{-1}\|V\|_\K)$
$$
\sup_{|y|>R_1} \int_{\R^3} \frac {\chi_{|x| \leq R} |V(x)| \dd x}{|x-y|} < \epsilon,
$$
so for $|y|>\max(R_0, R_1)$
$$
\sup_{|y|>\max(R_0, R_1)} \bigg|\int_{\R^3} \frac {\chi_{|x|>R} V(x) \dd x}{|x-y|} \bigg| < 2\epsilon.
$$
If $V \geq 0$, then this implies
$$
\sup_{|y|>\max(R_0, R_1)} \int_{\R^3} \frac {\chi_{|x|>\tilde R} V(x) \dd x}{|x-y|} < 2\epsilon
$$
for any $\tilde R \geq R$. By dominated convergence, for $\tilde R \geq 2\max(R_0, R_1)$
$$
\sup_{|y| \leq \max(R_0, R_1)} \int_{\R^3} \frac {\chi_{|x|>\tilde R} V(x) \dd x}{|x-y|} \leq 2 \int_{\R^3} \frac {\chi_{|x|>\tilde R} V(x) \dd x}{|x|} \to 0 \leq 2 \|V\|_\K
$$
as $\tilde R \to \infty$. Hence $\|\chi_{|x| \geq R} V\|_\K \to 0$, in other words $V$ satisfies the modified distal condition, and $V$ is well-approximated in $\K \subset \Delta L^\infty$ by measures of compact support.

If $\Delta^{-1} V \in C_0$, then $\Delta^{-1} [\epsilon^{-3} \chi(\cdot/\epsilon) \ast V] \in C_0$ as well, so we can use the above to further approximate these mollifications in $\K$ by smooth cutoffs that are test functions.
\end{proof}

\begin{lemma}\lb{localdistal} For $V \geq 0 \in \K$, $V \in \Delta C_0$, meaning that
$$f_V(y):=\int_{\R^3} \frac{V(x) \dd x}{|x-y|} = 4\pi (-\Delta)^{-1} V \in C_0,
$$
if and only if $V$ has the local and modified distal Kato properties.
%the following three properties are equivalent: $|V| \in (\Delta L^\infty)_0$,
%
%\noindent i. $|V|$ has the local and modified distal Kato properties,
%
%\noindent and
%
%\noindent ii. $|V| \in \Delta C_0$, meaning that 
\end{lemma}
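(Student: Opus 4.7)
The plan is to handle both implications with tools already at hand: a Frostman-type bound plus an elementary splitting for the reverse direction, and a Fatou-plus-compactness contradiction for the forward direction.

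For $(\Leftarrow)$, assume $V$ has the local and modified distal Kato properties. To show $f_V$ vanishes at infinity, given $\epsilon>0$ use modified distal to choose $R$ with $\|\chi_{|x|>R}V\|_\K<\epsilon$, then
$$
f_V(y) = \int_{|x|\leq R}\frac{dV(x)}{|x-y|}+\int_{|x|>R}\frac{dV(x)}{|x-y|} \leq \frac{V(B(0,R))}{|y|-R}+\epsilon
$$
for $|y|>R$; by the Frostman bound (\ref{frostman}), $V(B(0,R))<\infty$, so the first term tends to $0$ as $|y|\to\infty$. To show continuity at $y_0$, bound
$$
|f_V(y)-f_V(y_0)| \leq \int \Big|\frac{1}{|x-y|}-\frac{1}{|x-y_0|}\Big| dV(x)
$$
and split into a near part $|x-y_0|<\delta$ and a far part $|x-y_0|\geq\delta$. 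On the far part, for $|y-y_0|<\delta/2$ one has $|x-y|\geq\delta/2$ and the elementary inequality $\big|\tfrac{1}{|x-y|}-\tfrac{1}{|x-y_0|}\big|\leq \frac{2|y-y_0|}{\delta|x-y_0|}$ reduces the contribution to $\les \delta^{-1}|y-y_0|\|V\|_\K$. On the near part, bound each fraction separately: $\int_{|x-y_0|<\delta}\tfrac{dV}{|x-y_0|}$ is small by local Kato, and $\int_{|x-y_0|<\delta}\tfrac{dV}{|x-y|}\leq \int_{|x-y|<3\delta/2}\tfrac{dV}{|x-y|}$ (using $|x-y|\leq|x-y_0|+|y-y_0|<3\delta/2$) is uniformly small by local Kato. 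Choose $\delta$ first by local Kato, then $|y-y_0|$ small.

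For $(\Rightarrow)$, the modified distal property is essentially contained in the proof of the previous lemma (where $(-\Delta)^{-1}V \in C_0$ together with $V \in \K$, $V\geq 0$, was shown to force $\|\chi_{|x|>R}V\|_\K\to 0$), so it remains to extract the local Kato property. Argue by contradiction: suppose there exist $\alpha>0$, points $y_n$, and radii $\epsilon_n\to 0$ with $I_n:=\int_{|x-y_n|<\epsilon_n}\tfrac{dV(x)}{|x-y_n|}>\alpha$. Passing to a subsequence, either $|y_n|\to\infty$, in which case $I_n\leq f_V(y_n)\to 0$ by $f_V\in C_0$, a contradiction; or $y_n\to y^*\in\R^3$, in which case $f_V(y^*)<\infty$ forces $V(\{y^*\})=0$. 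Then for $V$-almost every $x$, $\mathbf{1}_{|x-y_n|\geq\epsilon_n}/|x-y_n|\to 1/|x-y^*|$ as $n\to\infty$, so by Fatou's lemma applied to $J_n:=f_V(y_n)-I_n = \int_{|x-y_n|\geq\epsilon_n}\tfrac{dV(x)}{|x-y_n|}$,
$$
\liminf_n J_n \geq f_V(y^*).
$$
Combined with continuity $f_V(y_n)\to f_V(y^*)$, this forces $\limsup_n I_n\leq 0$, contradicting $I_n>\alpha$.

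The main obstacle is the forward direction's local Kato property: converting pointwise continuity of $f_V$ into \emph{uniform} (in $y$) smallness of the singular local integral. The Fatou-plus-compactness trick handles this cleanly by forcing a hypothetical failure to concentrate either at a finite accumulation point (where pointwise finiteness of $f_V$ rules out atoms of $V$ and gives the limiting lower bound on $J_n$) or at infinity (where $f_V$ is already small); no other ingredients are required.
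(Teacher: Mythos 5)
Your proof is correct, and the main novelty is in the forward direction. For the backward direction (local and modified distal properties $\Rightarrow f_V \in C_0$), both you and the paper split into a near part and a far part; the paper truncates the kernel to $f_{V,\delta}$ and bounds the modulus of continuity by integrating a gradient estimate, whereas you use the elementary pointwise difference inequality directly. The result is the same (uniform continuity with a modulus controlled by $\|V\|_\K$ and the local Kato modulus), but your computation is a bit more elementary. For the forward direction's local Kato property, the paper's contradiction argument first uses the already-established modified distal property to trap the sequence $(y_n)$ in a compact ball, then observes that the near-integral $\int_{B(y_0,\delta)} \frac{dV(x)}{|x-y|}$ is a \emph{continuous function of $y$} on $B(y_0,\delta/2)$ (being the difference of two continuous functions), and passes $y_n \to y_0$ inside it to get $\int_{B(y_0,\delta)}\frac{dV(x)}{|x-y_0|} \geq \epsilon_0$ for every $\delta$, contradicting dominated convergence. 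Your route replaces this auxiliary continuity observation with a single application of Fatou's lemma to $J_n = f_V(y_n) - I_n$, combined with $f_V(y_n)\to f_V(y^*)$, to force $\limsup_n I_n \leq 0$. Both arguments hinge on the same underlying facts — that $\K$-measures have no atoms and that $f_V$ is continuous — but Fatou is arguably cleaner, and your case split $|y_n|\to\infty$ vs.\ $y_n\to y^*$ means you do not actually need the modified distal property to bound the sequence, making the local Kato extraction logically independent of the modified distal extraction. One small remark: your assertion that $f_V(y^*)<\infty$ rules out $V(\{y^*\})>0$ is correct since $f_V\in C_0$ is finite-valued, and in fact the $\K$-norm already excludes atoms, which is the form the paper uses.
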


When $V \geq 0 \in \K$, $f_V$ is bounded; also, by Fatou's lemma, $f_V$ is lower semicontinuous, so it reaches a minimum on any compact set.

%In particular, this lemma shows that continuous measures with compact support are in $\Delta C_0$ if they are absolutely continuous and bounded with respect to a Hausdorff measure of dimension strictly greater than $1$.

The local and distal conditions can be separated: if $V \geq 0$ only satisfies the local Kato condition, then it can be approximated in $\Delta L^\infty$ by smooth functions that need not decay at infinity, hence $f_V \in C$. The modified distal condition by itself states that $V$ decays at infinity in the Kato norm sense.

The local and distal Kato conditions are stated for $|V|$, so when $V$ has arbitrary sign or is even complex-valued the two conditions are equivalent to $V_\pm$ or $\re V_\pm$ and $\im V_\pm$ being in $\Delta C_0$.

The local and modified distal Kato conditions define closed subspaces of $\K$ that include $\K_0$, since they hold for test functions and are stable under taking limits in $\K$. However, these closed subspaces are strictly wider, since they also contain singular measures.

Indeed, if $V \in \D$, then $f_V$ is smooth and decays at infinity like $|x|^{-1}$. Any $V \in \K_0$ (and then $|V|$ as well) is well approximated in $\K$ by test functions, which gives a uniform approximation of $f_V$ by $C_0$ functions, so $f_V \in C_0$ itself.

\begin{proof}
%We first establish the equivalence between $i$ and $V \in (\Delta L^\infty)_0$.
%
%In one direction, functions $V \in \D$ have both the local and the distal properties and these are stable under taking the limit in $\Delta L^\infty$.

If $V$ has the local Kato property, the function $f_V$ is uniformly approximated by
$$
f_{V, \delta}(y) = \int_{|x-y|>\delta} \frac {V(x) \dd x}{|x-y|}.
$$
The derivative of $f_{V, \delta}$ is of size
$$
|\nabla f_{V, \delta}(y)| \leq \int_{|x-y|>\delta} \frac {V(x) \dd x}{|x-y|^2} + \int_{|x-y|=\delta} \frac {V(x) \dd x}{|x-y|} \leq \delta^{-1} \|V\|_\K + \delta^{-1} \int_{|x-y|=\delta} V(x) \dd x.
$$
The second term may be infinite, but integrating over the line segment from $y_1$ to $y_2$ with $|y_1-y_2|<\delta/2$ leads to
$$
|f_{V, \delta}(y_1)-f_{V, \delta}(y_2)| \leq |y_1-y_2| \delta^{-1}\|V\|_\K + \delta^{-1} \int_{\delta/2 \leq |x-y| \leq 3\delta/2} V(x) \dd x \les |y_1-y_2| \delta^{-1}\|V\|_\K + \int_{|x-y| \leq 3\delta/2} \frac {V(x) \dd x}{|x-y|}.
$$
Supposing that $\sup_y \int_{|x-y| \leq 3\delta/2} \frac {V(x) \dd x}{|x-y|} < \epsilon$, it follows that for sufficiently small $|y_1-y_2|$
$$
|f_{V, \delta}(y_1)-f_{V, \delta}(y_2)| < 2\epsilon,\ |f_V(y_1)-f_V(y_2)| < 4\epsilon.
$$
%Thus $f_V$ is uniformly continuous.

%Let $V(y_0, \epsilon)=(1-\chi_{B(y_0, \epsilon)}) V$. For each $y$, $f_V$ is pointwise approximated by
%$$
%f_{V(y_0, \epsilon)}(y) = \int_{|x-y_0|>\epsilon} \frac {V(x) \dd x}{|x-y|}
%$$
%for fixed $y_0$ as $\epsilon \to 0$, by dominated convergence.
%%More generally, let
%%$$
%%f_{V\epsilon, \delta} = \int_{|x-y-\delta|>\epsilon} \frac {V(x) \dd x}{|x-y|}.
%%$$

%Moreover, for fixed $y_0$ and for $|y-y_0|, |\tilde y-y_0|<\epsilon/2$
%$$
%|f_{V(y_0, \epsilon)}(y)-f_{V(y_0, \epsilon)}(\tilde y)| = \bigg|\int_{|x-y_0|>\epsilon} \frac {V(x)}{|x-y|} - \frac {V(x)}{|x-\tilde y|} \bigg| \leq \frac {2|y-\tilde y|}{\epsilon} \|V\|_\K.
%$$
%If $V \geq 0 \in \K$ has the local Kato property, then uniformly
%$$
%\lim_{\epsilon \to 0} \sup_{|y-y_0|<c\epsilon} |f_{V(y_0, \epsilon)}(y) - f_V(y)| = 0.
%$$
Consequently $f_V$ is uniformly continuous, meaning that $V \in \Delta UC$ is well-approximated in $\Delta L^\infty$ by its (smooth) mollifications $V \ast \epsilon^{-3} \chi(\cdot/\epsilon)$.
%and
%In turn, $f_{V\epsilon}$ is uniformly approximated by $f_{V(\cdot-\delta) \epsilon, \delta}$: for $|\delta|<\epsilon/2$,
%$$\begin{aligned}
%|f_{V(\cdot-\delta)\epsilon}(y-\delta) - f_{V\epsilon}(y)| &= \bigg|\int_{|x-y-\delta|>\epsilon} \frac {V(x-\delta)}{|x-y|} - \int_{|x-y|>\epsilon} \frac {V(x)}{|x-y|} \bigg| \\
%&= \bigg|\int_{|x-y|>\epsilon} \frac {V(x)}{|x+\delta-y|} - \frac {V(x)}{|x-y|} \bigg| \leq \frac {2|\delta|}{\epsilon} \|V\|_\K.
%\end{aligned}$$
%Consequently
%$$
%\lim_{\delta \to 0} \sup_y |f_{V(\cdot-\delta)}-f_V| = 0.
%$$
%Therefore $V$ is well approximated in $\Delta L^\infty$ by $V \ast \epsilon^{-3} \chi(\cdot/\epsilon)$, which are smooth functions, so $V \in \Delta UC$.

If $V$ also fulfills the modified distal condition, we can first approximate it in $\K$ by a cutoff $\chi_{|x|>R} V(x)$, then do the above for the cutoff, resulting in an approximation of $V$ by test functions, so $f_V \in C_0$ and $V \in \Delta C_0$.

Conversely, assume that $V \geq 0 \in \K$. If $f_V$ vanishes at infinity, then, repeating the proof of the previous lemma, for every $\epsilon>0$ there exists $R_0$ such that $f_V(y)<\epsilon$ for $|y|>R_0$.

On the other hand, for $|y| \leq R_0$ and $|x|>R \geq 2R_0$,
$$
\int_{|x|>R} \frac {V(x) \dd x}{|x-y|} \leq 2 \int_{|x|>R} \frac {V(x) \dd x}{|x|} \to 0
$$
as $R \to \infty$ by dominated convergence. It follows that for large $R$ $\|\chi_{|x| > R} V\|_\K\| < \epsilon$, proving that $V$ has the modified distal property.

If in addition $f_V$ is continuous, we proceed by contradiction: assume that $V \in \Delta C_0$ does not have the local Kato property. Then there exist $\epsilon_0>0$ and a sequence $(y_n)_n$ such that
$$
\int_{|x-y_n|<1/n} \frac {V(x) \dd x}{|x-y_n|} \geq \epsilon_0.
$$
In light of the modified distal Kato property, all these points are inside a fixed ball, so a subsequence has some limit point $y_0$. Let $\delta>0$ and
$$
f_{V(y_0, \delta)}(y) = \int_{|x-y_0|>\delta} \frac {V(x) \dd x}{|x-y|}
$$
Since $f_{V(y_0, \delta)}(y)$ is continuous on $B(y_0, \delta/2)$, under our assumption that $f_V$ itself is continuous it follows that
$$
\int_{B(y_0, \delta)} \frac {|V(x)| \dd x}{|x-y|}
$$
is also continuous on $B(y_0, \delta/2)$.

For sufficiently large $n$, $B(y_n, 1/n) \subset B(y_0, \delta)$, so
$$
\int_{B(y_0, \delta)} \frac {|V(x)| \dd x}{|x-y_n|} \geq \epsilon_0.
$$
Passing to the limit in $n$ gives us that
$$
\int_{B(y_0, \delta)} \frac {|V(x)| \dd x}{|x-y_0|} \geq \epsilon_0
$$
for each $\delta>0$. However, this is a contradiction, as this integral vanishes when $\delta \to 0$ by dominated convergence (because $V \in \K$ cannot contain a point mass).
%Taking $\delta=1/n$, we see that for each $n$ this integral is at least $\epsilon_0$ along a sequence that converges to $y_0$, so it is at least $\epsilon_0$ at $y_0$:
%$$
%\int_{|x-y_0| \leq 1/n} \frac {|V(x)| \dd x}{|x-y_0|} \geq \epsilon_0.
%$$
%However, by dominated convergence (and because $V$ cannot contain a point mass at $y_0$), this integral vanishes as $n \to \infty$, giving the desired contradiction.
\end{proof}

So far we have looked at four closed subspaces of $\K$, determined by the local, distal, and modified distal Kato properties and by local integrability. It turns out that $\K_0$ is also characterized by a combination of these properties:
\begin{lemma}\lb{charkato} For a measure $V \in \K$, $V \in \K_0$ if and only if $V$ has the local and modified distal Kato properties and $V \in L^1_{loc}$.
\end{lemma}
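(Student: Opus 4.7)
My plan is to prove the two implications separately. The forward direction (that $V \in \K_0$ implies the three properties) uses closedness: test functions trivially satisfy all three properties, and each property survives Kato-norm limits. The reverse direction is a two-step approximation: truncate $V$ in space using the modified distal property, then mollify using the local Kato property together with local integrability.

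For the forward direction, pick $V_n \in \D$ with $\|V - V_n\|_\K \to 0$. The local Kato property for $V$ follows because the smooth, compactly supported $V_n$ satisfies $\sup_y \int_{|x-y|<\delta} |V_n(x)|/|x-y| \dd x \les \|V_n\|_\infty \delta^2$, and the Kato-norm defect controls the remainder; the modified distal property is even easier since $V_n$ has compact support, so $\chi_{|x|>R} V_n = 0$ for large $R$ and $\|\chi_{|x|>R} V\|_\K \le \|V - V_n\|_\K$. For local integrability, apply (\ref{frostman}) to $V - V_n$ on $B(y_0, R)$ to obtain $V_n \to V$ in total variation on every bounded ball; since each $V_n$ is absolutely continuous with respect to Lebesgue measure and absolute continuity is preserved under total variation limits (a Lebesgue-null set remains null), $V \in L^1_{loc}$.

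For the reverse direction, fix $\epsilon > 0$. Use the modified distal property to choose $R$ with $\|\chi_{|x|>R} V\|_\K < \epsilon/2$ and set $W = \chi_{|x| \leq R} V$; by local integrability $W \in L^1$, and $W$ inherits the local Kato property since $|W| \leq |V|$. Approximate $W$ by $W_\eta = W \ast \phi_\eta \in \D$ with $\phi_\eta$ a standard mollifier. The main obstacle, and the heart of the proof, is showing $\|W - W_\eta\|_\K \to 0$; I would split the defining integral at $|x-y| = \delta$. On the exterior piece, $\int_{|x-y| \geq \delta} |W_\eta - W|(x)/|x-y| \dd x \leq \delta^{-1} \|W_\eta - W\|_{L^1} \to 0$ by the standard mollifier property in $L^1$. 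On the interior piece, I would use $|W_\eta| \leq |W| \ast \phi_\eta$ together with Fubini to bound $\int_{|x-y|<\delta} |W_\eta|/|x-y| \dd x \leq \int \phi_\eta(z) \int_{|w-(y-z)|<\delta} |W(w)|/|w-(y-z)| \dd w \dd z$, which, together with the direct term $\int_{|x-y|<\delta} |W|/|x-y| \dd x$, is uniformly small in $y$ and $\eta$ by the local Kato property once $\delta$ is small. Choosing $\delta$ first and $\eta$ second yields $\|W - W_\eta\|_\K < \epsilon/2$, and combined with the truncation gives $\|V - W_\eta\|_\K < \epsilon$, proving $V \in \K_0$.
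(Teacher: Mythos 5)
Your proof is correct and follows essentially the same route as the paper: forward direction by closedness of each property under Kato-norm limits, reverse direction by truncation (via the modified distal property) followed by mollification (via the local Kato property and $L^1_{loc}$). The one place you diverge slightly is the central mollification estimate $\|W - W_\eta\|_\K \to 0$: the paper first proves that $W$ is continuous under translation in $\K$ (bounding $\|W(\cdot-\delta)-W\|_\K$ by an $L^1$ piece away from the singularity plus a local Kato piece near it) and then invokes the standard fact that translation continuity implies mollification convergence, whereas you bound $\|W - W_\eta\|_\K$ directly by splitting at $|x-y|=\delta$, using $\delta^{-1}\|W-W_\eta\|_{L^1}$ on the far piece and the pointwise bound $|W_\eta| \leq |W| \ast \phi_\eta$ plus Fubini and translation-uniformity of the local Kato modulus on the near piece. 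Both are correct and of comparable length; your version is marginally more self-contained in that it avoids stating the translation-continuity-implies-mollification-convergence principle as a separate step, while the paper's version isolates translation continuity as a reusable fact.
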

\begin{proof} If $V \geq 0 \in C_c$, then $f_V$ is continuous and decays at infinity like $|x|^{-1}$, so $V \in \Delta C_0$. Any $V \in \K_0$ is approximated in $\K$ by $V_k \in \D$, so $|V|$ is well approximated in $\K$ by $|V_k| \in C_c$; thus $|V| \in \Delta C_0$. Then $|V|$, hence $V$ itself, has the local and modified distal Kato properties.

Likewise, $\D \subset L^1_{loc} \cap \K$ and $L^1_{loc} \cap \K$ is a closed subspace of $\K$ due to (\ref{frostman}).

Conversely, let $\chi$ be a standard cutoff function. Suppose that $V \in L^1_{loc}$ has the modified distal Kato property; then $V$ is well-approximated in $\K$ by its cutoffs $\chi(\cdot/R) V \in L^1 \cap \K$.

Without loss of generality, then, consider compactly supported $V \in L^1 \cap \K$. For $|\delta|<\epsilon$
$$
f_{|V(\cdot-\delta)-V|}(y) = \int_{\R^3} \frac {|V(x-\delta)-V(x)|}{|x-y|} \leq \epsilon^{-1} \|V(\cdot-\delta)-V\|_{L^1} + 2 \int_{B(0, 2\epsilon)} \frac {|V(x)|}{|x-y|}.
$$
If $V$ has the local Kato property, the second quantity goes uniformly to $0$ as $\epsilon \to 0$. By making $\delta$ accordingly small, the whole expression vanishes. Therefore $\|V(\cdot-\delta)-V\|_\K \to 0$ or in other words $V$ is continuous under translation in $\K$. Consequently, $V$ is well-approximated in $\K$ by its mollifications $V \ast \epsilon^{-3} \chi(\cdot/\epsilon) \in \D$, finishing the proof.
\end{proof}

\begin{observation}
Consider $g \in L^\infty$. If $V \in \K \cap \Delta C_0$, $V$ has the local and (modified) distal properties, hence so does $Vg$, implying that $Vg \in \Delta C_0$ as well. Thus, $\K \cap \Delta C_0$ is a closed $L^\infty$-submodule of $\K$ and same for $\K \cap \Delta L^\infty_0$ and $\K \cap L^1_{loc}$. The set of $V \in \K$ having the local Kato property is obviously another and same for the distal Kato property.
\end{observation}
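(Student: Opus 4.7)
The plan is to verify, for each of the sets listed, two things: (i) closure under multiplication by an $L^\infty$ function, and (ii) closure in the Kato norm. Both follow uniformly from the observation $|Vg| \leq \|g\|_\infty |V|$ combined with the fact that each defining condition is monotone in $|V|$.

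First I would handle the three ``elementary'' sets directly: those $V \in \K$ satisfying the local Kato, distal Kato, or modified distal Kato properties. For each of these the defining quantity is a supremum of an integral of $|V|$ against a nonnegative kernel, over a shrinking or expanding region. Multiplication by $g \in L^\infty$ multiplies this supremum by at most $\|g\|_\infty$, preserving the vanishing as $\epsilon \to 0$ or $R \to \infty$, which gives the $L^\infty$-module property. For Kato-norm closure, if $V_n \to V$ in $\K$ and each $V_n$ has the local Kato property, then for any $\eta>0$ pick $n$ with $\|V-V_n\|_\K<\eta/2$ and then $\epsilon$ small enough that the local Kato integral of $|V_n|$ on $|x-y|<\epsilon$ is below $\eta/2$; the triangle inequality transfers the bound to $V$. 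The modified distal and distal cases are identical with $|x-y|<\epsilon$ replaced by $|x|>R$ and $|x-y|>R$ respectively.

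Next I would reduce $\K \cap \Delta C_0$ to the above by invoking Lemma \ref{localdistal}, which for $V \geq 0$ equates $V \in \Delta C_0$ with the conjunction of the local and modified distal Kato properties. Signed or complex $V$ is handled by passing to $|V|$ (or separately to $\re V_\pm$, $\im V_\pm$), as the observation itself indicates. Since $\K \cap \Delta C_0$ is the intersection of two sets already shown to be closed $L^\infty$-submodules, it inherits both properties. For $\K \cap \Delta L^\infty_0$ the same route works, using that Kato convergence is stronger than $\Delta L^\infty$ convergence (because $|(-\Delta)^{-1}V(y)| \les \|V\|_\K$), so approximations of $V_n$ by test functions in $\K$ descend to approximations in $\Delta L^\infty$, and the two underlying properties are preserved exactly as before.

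Finally, for $\K \cap L^1_{loc}$ the $L^\infty$-closure is immediate from $|Vg|\leq\|g\|_\infty|V|$, and the Kato-closure follows from (\ref{frostman}), which gives $\|V\|_{L^1(B(y,R))} \leq R \|V\|_\K$ uniformly in $y$, so every Kato-Cauchy sequence is Cauchy in $L^1_{loc}$ and the limit lies in $L^1_{loc}$. There is no real obstacle; the only point requiring care is the reduction from signed or complex $V$ to $|V|$ when invoking Lemma \ref{localdistal}, since that lemma is stated for non-negative potentials.
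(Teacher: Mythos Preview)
Your proposal is correct and follows precisely the reasoning the paper sketches within the observation itself (the paper gives no separate proof here): reduce membership in $\K \cap \Delta C_0$ to the local and modified distal Kato properties via Lemma \ref{localdistal}, observe that these properties are monotone in $|V|$ and hence inherited by $Vg$, and use (\ref{frostman}) for the $L^1_{loc}$ case. You have simply fleshed out each step, including the Kato-norm closure argument, which the paper leaves implicit.
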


\subsection{Properties of the Kato--Birman operator}
With the notation $\lambda=\eta^2$, let the free resolvent be
$$
(-\Delta-\lambda)^{-1}(x, y)=R_0(\eta^2)(x, y) = \frac 1 {4\pi} \frac {e^{i\eta|x-y|}}{|x-y|}.
$$
We see that the free resolvent has holomorphic extensions to the Riemann surface of $\sqrt \lambda$, which is a two-sheeted cover of $\C$ with a branching point at $0$. Then $\C \setminus [0, \infty)$ is just one of the sheets, corresponding to the upper half-plane $\im \eta > 0$, whose boundary $\lambda\pm i0 \in [0, \infty)$ corresponds to $\eta \in \R$.

Almost the entire argument uses this sheet and its boundary. When $V \in \K$, operators on the other sheet are only bounded in exponentially weighted spaces with positive weights. However, the proof of Proposition \ref{no_pos} uses a strip in the other sheet.

For $V \in \K$, consider the holomorphic family of operators on $\C \setminus [0, \infty)$ $A(\lambda)=V R_0(\lambda)$. When $V \in \K_0$, both $A$ and $A^*$ enjoy several continuity and compactness properties, up to the boundary. These properties still hold under weaker assumptions on $V$, see below.

On the usual sheet $\im \eta \geq 0$, including its boundary, $A(\eta^2)$ is uniformly bounded in $\B(\M) \cap \B(\K)$ and $\partial_\eta A(\eta^2)$ is uniformly bounded in $\B(\M, \K)$. If $V \in \K$ fulfills the distal condition then
$$
A(\eta^2) = A(\eta_0^2) + (\eta-\eta_0) \partial_\lambda A(\eta_0^2) + o_{\B(\M, \K)}(\eta-\eta_0)
$$
uniformly over $\im \eta, \im \eta_0 \geq 0$. This is because
$$
\frac {e^{i\eta|x-y|}-e^{i\eta_0|x-y|}}{(\lambda-\lambda_0)|x-y|} - 1 \les \min(1, (\eta-\eta_0)|x-y|).
$$
Higher-order complex derivatives live in weighted spaces on the boundary (on the spectrum).

We next prove certain properties of $A(\lambda)=R_0(\lambda) V$ when $V \in \K$ has the local and modified distal Kato properties. The Kato--Birman operator $I+A$ plays an important part in the spectral analysis of $H=-\Delta+V$.

The operator family $A(\lambda)=V R_0(\lambda)$ is holomorphic on $\C \setminus [0, \infty)$, in the sense that $\partial_\lambda A \in \B(\M) \cap \B(\K)$.

If $V \in \K$, then $V$ is a relatively bounded perturbation of $-\Delta$, in the sense that
$$
\sup_{\lambda \in \C} \|V R_0(\lambda)\|_{\B(\M)} \leq \frac {\|V\|_\K} {4\pi},
$$
including the boundary values of $R_0$ along $[0, \infty)$. If $V \in \K_0$, it is a relatively compact perturbation. If $V$ fulfills the distal condition, then $A(\lambda)$ is continuous in the $\B(\M)$ norm, see below. In addition to $\B(\M)$, the same boundedness, compactness, and continuity properties hold in $\B(\K)$.

Likewise, for the adjoint, if $V \in \K$ then $R_0(\lambda) V \in \B(L^\infty) \cap \B(\K^*)$ uniformly in $\lambda$, where by $L^\infty$ we understand the Banach space of bounded Borel measurable functions, and the operators $R_0(\lambda) V$ are compact in $B(L^\infty, C_0)$ and $\B(\K^*, (\K^*)_0)$ if $V \in \K_0$.

The similar behaviors of $A$ and $A^*$ are dictated by the following commutation relation: $V A^*(\lambda) = A(\ov \lambda) V$.
%Thus, in most cases it suffices to examine only one of them.

Consider an underlying measure $\mu$ such that $V \in L^1_{loc}(\mu)$. By complex interpolation, after fixing an underlying measure $\mu$ such that $V \in L^1_{loc}(\mu)$, we obtain similar bounds for $|V|^\alpha \sgn V R_0 |V|^{1-\alpha} \in \B(L^p)$ and $|V|^\alpha R_0 |V|^{1-\alpha} \sgn V \in \B(L^p)$ for $\alpha \in (0, 1)$ and $p=1/\alpha$.

In particular,
$$
|V|^{1/2} R_0 |V|^{1/2},\ |V|^{1/2} R_0 |V|^{1/2} \in \B(L^2)
$$
uniformly and are compact when $V \in \K_0$.

If $V$ is not locally integrable, a more general intrinsic approach (without invoking an underlying measure) is as follows. Let $\mu=|V|$ and consider the operator $R_0(\lambda)V$ on $L^1_{loc}(\mu)$. Then $V \in \K$ means that $\frac 1 {|x-y|} \in L^\infty_y L^1_x(\mu)$, so $[R_0(\lambda) V](x, y) \in \B(L^1_\mu)$. As already mentioned, $R_0(\lambda) V \in \B(L^\infty)$, so by interpolation $R_0(\lambda) V \in \B(L^p_\mu)$ for $1 \leq p \leq \infty$.

\begin{lemma} $\M \cap \K \subset [\M, \K]_{1/2} \subset \dot H^{-1}$.
\end{lemma}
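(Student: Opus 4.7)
The first inclusion $\M \cap \K \subset [\M, \K]_{1/2}$ is automatic from general complex interpolation theory: for any compatible couple $(X_0, X_1)$, the intersection $X_0 \cap X_1$ embeds continuously into $[X_0, X_1]_\theta$ with $\|V\|_{[X_0, X_1]_\theta} \leq \|V\|_{X_0}^{1-\theta}\|V\|_{X_1}^\theta$, so nothing further is needed for this half.

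The substance is in the second inclusion $[\M, \K]_{1/2} \subset \dot H^{-1}$. My plan is to deduce it from a bilinear $\M\times\K$ estimate via Calder\'on's bilinear complex interpolation theorem. Since in three dimensions the kernel of $(-\Delta)^{-1}$ is $(4\pi|x-y|)^{-1}$, the $\dot H^{-1}$ seminorm coincides, up to a constant, with the Coulomb self-energy. Define the sesquilinear form
$$
B(V, W) := \frac 1 {4\pi}\iint \frac{dV(x)\,\overline{dW(y)}}{|x-y|},
$$
so that $B(V, V) = \|V\|_{\dot H^{-1}}^2$. When $V \in \M$ and $W \in \K$, the inner integral $\int \frac{d|W|(y)}{|x-y|}$ is bounded pointwise in $x$ by $\|W\|_\K$, giving $|B(V, W)| \leq (4\pi)^{-1}\|V\|_\M\|W\|_\K$. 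By swapping which argument is estimated in $\M$ and which in $\K$, one likewise gets $|B(V, W)| \leq (4\pi)^{-1}\|V\|_\K\|W\|_\M$.

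Next, apply Calder\'on's bilinear complex interpolation theorem to $B$ with source couples $(\M, \K)$ and $(\K, \M)$: the form is bounded on $[\M, \K]_{1/2}\times [\K, \M]_{1/2}$, and using the symmetry $[\K, \M]_{1/2} = [\M, \K]_{1/2}$ of complex interpolation one obtains $|B(V, W)| \les \|V\|_{[\M, \K]_{1/2}}\|W\|_{[\M, \K]_{1/2}}$. Specializing to $W = V$ yields the desired continuous embedding $\|V\|_{\dot H^{-1}} \les \|V\|_{[\M, \K]_{1/2}}$.

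The main technical point I anticipate is invoking Calder\'on's bilinear theorem in this exact configuration; concretely, one takes analytic representatives $F, G$ in the Calder\'on construction realizing $V, W$ at the mid-line, forms $\varphi(z) = B(F(z), \overline{G(\bar z)})$ (the conjugation makes $\varphi$ holomorphic while respecting the sesquilinearity of $B$, and complex conjugation is an isometry of both $\M$ and $\K$), and applies the three-lines lemma. A mild technical caveat is the non-separability of $\M$, but Calder\'on's method works for arbitrary compatible couples, so this causes no obstruction.
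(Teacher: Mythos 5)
Your proof is correct and is essentially the paper's own argument, just with the interpolation step spelled out in more detail. The paper considers the same quadratic form $\langle V_1, (-\Delta)^{-1} V_2 \rangle$, notes it is bounded on $\M\times\K$ and $\K\times\M$, interpolates to $[\M,\K]_{1/2}\times[\M,\K]_{1/2}$, and then sets $V_1=V_2=V$; you have simply made explicit the Calder\'on analytic-function construction, the symmetry $[\K,\M]_{1/2}=[\M,\K]_{1/2}$, the conjugation trick $\varphi(z)=B(F(z),\overline{G(\bar z)})$ for the sesquilinear form, and the triviality of the first inclusion.
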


\begin{proof}
Consider the quadratic form $\langle V_1, (-\Delta)^{-1} V_2 \rangle$. It is bounded on $\M \times \K$ and $\K \times \M$, so, by interpolation, it is bounded on $[\M, \K]_{1/2} \times [\M, \K]_{1/2}$. Hence
$$
\|V\|_{\dot H^{-1}}^2 = \langle V, (-\Delta)^{-1} V \rangle \les \|V\|_{[\M, \K]_{1/2}}^2
$$
and the conclusion follows.
\end{proof}

\begin{lemma} The statement that $V \in \B(\dot H^1, \dot H^{-1})$ is true for either $\dot H^1=(-\Delta)^{-1/2} L^2$ or $\dot H^1(\mu)=(-\Delta)^{-1/2} L^2(\mu)$ and for either $\dot H^{-1}$ or $\dot H^{-1}(\mu)=(-\Delta)^{1/2} L^2(\mu)$.
\end{lemma}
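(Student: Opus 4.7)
The plan is to factor $V = WUW$ with $W = |V|^{1/2}$ and $U = \sgn V$ ($|U|=1$ on $\supp V$), and to reduce each of the four claimed mapping properties to the Kato--Birman $L^2$ bound $A := W(-\Delta)^{-1/2}\in\B(L^2)$ recorded just above.

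The base case $V:\dot H^1\to\dot H^{-1}$ is immediate from the resulting identity $(-\Delta)^{-1/2}V(-\Delta)^{-1/2} = A^\ast UA$, a product of $L^2$-bounded operators. Equivalently, the bilinear form satisfies
\[
\Bigl|\int V f\bar g\,dx\Bigr| = \Bigl|\int U(Wf)(\overline{Wg})\,dx\Bigr| \leq \|Wf\|_{L^2}\|Wg\|_{L^2} \les \|f\|_{\dot H^1}\|g\|_{\dot H^1},
\]
the last step from $\|Wf\|_{L^2} = \|W(-\Delta)^{-1/2}(-\Delta)^{1/2}f\|_{L^2} \leq \|A\|\|f\|_{\dot H^1}$ and similarly for $g$.

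For the weighted versions, I would use the canonical isometry $W:L^2(\mu)\to L^2_{\supp V}$ (extension by zero off $\supp V$) arising from $\|f\|_{L^2(\mu)}^2=\|Wf\|_{L^2}^2$, together with the analogous identifications for $\dot H^{\pm 1}(\mu)$. Under these, each case translates to an $L^2$-type bound on one of $T$, $WT$, $TW^{-1}$, or $WTW^{-1}$, where $T = A^\ast UA = (-\Delta)^{-1/2}V(-\Delta)^{-1/2}$. The two ``mixed'' cases $\dot H^1\to\dot H^{-1}(\mu)$ and $\dot H^1(\mu)\to\dot H^{-1}$ are adjoints of each other (since $V$ and $T$ are $L^2$-self-adjoint), and each decomposes into $A^\ast U A$ composed with one extra Kato--Birman factor, so they are handled exactly as in the base case.

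The main obstacle is the fully weighted case $V:\dot H^1(\mu)\to\dot H^{-1}(\mu)$, equivalent to $WTW^{-1}\in\B(L^2_{\supp V})$: here the $W^{-1}$ on one side does not pair directly with a Kato--Birman factor. I would handle it by a Schur test on the positive integral kernel of $W(-\Delta)^{-1/2}W^{-1}$, namely $cW(x)W(y)^{-1}/|x-y|^2$ restricted to $\supp V\times\supp V$, with a test function of the form $W^\beta$ for a suitably chosen exponent $\beta$ so that the resulting weighted integrals are controlled by the Kato norm $\sup_y\int|V(x)|/|x-y|\,dx$. An alternative route is the duality and interpolation used in the preceding lemma $\M\cap\K\subset[\M,\K]_{1/2}\subset\dot H^{-1}$, adapted to weighted $L^2$ spaces, which gives the equivalent Hardy-type estimate $\|(-\Delta)^{-1/2}h\|_{L^2(\mu)}\les\|h\|_{L^2(\mu)}$ directly from the Kato condition.
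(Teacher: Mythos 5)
Your factorization argument for the unweighted case is correct and takes a genuinely different route from the paper: you write $(-\Delta)^{-1/2}V(-\Delta)^{-1/2}=A^\ast U A$ and invoke the Kato--Birman bound $A=|V|^{1/2}(-\Delta)^{-1/2}\in\B(L^2)$ (equivalent to $|V|^{1/2}R_0|V|^{1/2}\in\B(L^2)$, which the paper records just above). The paper instead runs a single complex interpolation between the two Kato endpoints $V:\Delta^{-1}L^1(\mu)\to \M$ (or $L^1(\mu)$) and $V:L^\infty\to\K\subset\Delta L^\infty$, using $[\Delta^{-1}L^1,L^\infty]_{1/2}=\dot H^1$, $[\Delta^{-1}L^1(\mu),L^\infty]_{1/2}=\dot H^1(\mu)$, $[L^1(\mu),\Delta L^\infty]_{1/2}=\dot H^{-1}(\mu)$, $[\M,\K]_{1/2}\subset\dot H^{-1}$. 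The payoff of the paper's route is that one interpolation handles all four source/target combinations at once; your $TT^\ast$ argument gives the base case cleanly but does not extend.

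The weighted and mixed cases in your proposal contain a genuine gap. You write $WT$, $TW^{-1}$, $WTW^{-1}$ and claim the mixed cases are ``$A^\ast U A$ composed with one extra Kato--Birman factor''. Unpacking, $WT=(WA^\ast)UA$ with $WA^\ast=|V|^{1/2}(-\Delta)^{-1/2}|V|^{1/2}$, and $TW^{-1}=A^\ast U\,(AW^{-1})$ with $AW^{-1}=|V|^{1/2}(-\Delta)^{-1/2}|V|^{-1/2}$. Neither factor is a Kato--Birman operator: the Kato--Birman bound controls $|V|^{1/2}(-\Delta)^{-1}|V|^{1/2}$, whose kernel scales like $|V|^{1/2}(x)|V|^{1/2}(y)|x-y|^{-1}$ and is exactly matched by $\|V\|_\K=\sup_y\int|V(x)||x-y|^{-1}dx$; but $(-\Delta)^{-1/2}$ has kernel $\sim|x-y|^{-2}$, so $|V|^{1/2}(-\Delta)^{-1/2}|V|^{\pm1/2}$ has the wrong homogeneity and is \emph{not} bounded for general $V\in\K$ (e.g. $V(x)\sim|x|^{-2+\epsilon}$ near $0$ is Kato but makes $\int|V(y)|\,|x-y|^{-2}dy$ diverge). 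The same exponent mismatch defeats the proposed Schur test for $W(-\Delta)^{-1/2}W^{-1}$: any test function of the form $W^\beta$ produces integrals $\int|V|^{s}|x-y|^{-2}dx$ which the Kato hypothesis does not control. Thus you really do need the two-endpoint $L^1$--$L^\infty$ structure (your ``alternative route'', which \emph{is} the paper's proof) rather than a single $L^2$ factorization for the three weighted cases.
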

\begin{proof}
Recall that $\K \subset \Delta L^\infty$ and likewise $\Delta^{-1} \M \subset \K^*$. Thus $V \in \K$ acts as a bounded multiplier from $\Delta^{-1} L^1$ or $\Delta^{-1} L^1(\mu)$ (in general from $\Delta^{-1} \M$) to $\M$ and more specifically to $L^1(\mu)$, as well as from $L^\infty$ to $\K \subset \Delta L^\infty$; hence, by complex interpolation, also from $\dot H^1$ to $[\M, \K]_{1/2} \subset \dot H^{-1}$.

Then we can obtain all four possibilities by complex interpolation:
$$
[\Delta^{-1} L^1, L^\infty]_{1/2}=\dot H^1,\ [\Delta^{-1} L^1(\mu),\ L^\infty]_{1/2}=\dot H^1(\mu),
$$
$$
[L^1(\mu), \Delta L^\infty]_{1/2}=\dot H^{-1}(\mu),\ [\M, \K]_{1/2} \subset \dot H^{-1}.
$$
\end{proof}

%$R_0(0)$ takes $\M$ to $\Delta^{-1} \M$ and in particular $L^1(\mu)$ to $\Delta^{-1} L^1(\mu)$, $\K$ to $L^\infty$
%
%The only time we need this is to establish self-adjointness 

%In this case, compactness is easiest to state as follows: $R_0 V R_0$ is compact in $\B(L^2(|V|))$ when $V \in \K_0$.

We can generalize Lemma \ref{localdistal} in two ways. One is that if $V \in \K$ has the local and modified distal Kato properties, the same is true for $Vg$, where $\|g\|_{L^\infty} \leq 1$, with the same moduli of continuity for the two properties. Hence $f_{Vg}$ form a compact family in $C_0$.

The other is that we can replace $(-\Delta)^{-1}$ by $R_0(\eta^2)$, losing the equivalence, but preserving one implication. This results in
\begin{lemma} If $V \in K$ has the local and modified distal Kato properties, $R_0(\eta^2) V \in \B(L^\infty, C_0)$ is approximated in norm by translations, hence also by mollifications $\epsilon^{-3} \chi(\cdot/\epsilon) \ast R_0(\eta^2) V$.
\end{lemma}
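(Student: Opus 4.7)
The plan is to combine two ingredients already present in the paper. The first is the uniform bound $\|R_0(\eta^2)W\|_{\B(L^\infty)} \le \|W\|_\K/(4\pi)$, valid for all $\im \eta \ge 0$ as an immediate consequence of $|R_0(\eta^2)(x,y)| \le 1/(4\pi|x-y|)$ and the definition of the Kato norm. The second is the translation continuity of $V$ in $\K$, namely $\|V(\cdot-h) - V\|_\K \to 0$ as $h \to 0$, established inside the proof of Lemma \ref{charkato}.

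First I would verify that $R_0(\eta^2)V$ actually lies in $\B(L^\infty, C_0)$. For $V \in \D$ this is elementary: the kernel $e^{i\eta|x-y|}V(y)/(4\pi|x-y|)$ is continuous in $x$ (the singularity at $y=x$ is integrable and $V$ is smooth and compactly supported) and decays like $|x|^{-1}$ at infinity. For a general $V \in \K$ with the local and modified distal Kato properties, Lemma \ref{charkato} provides $V_n \in \D$ with $V_n \to V$ in $\K$; the uniform bound then gives $R_0(\eta^2)V_n \to R_0(\eta^2)V$ in $\B(L^\infty)$, and since $C_0$ is closed in $L^\infty$, the limit lies in $\B(L^\infty, C_0)$. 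The same argument shows that each translate $R_0(\eta^2)V(\cdot-h)$ and each mollification $R_0(\eta^2)[\epsilon^{-3}\chi(\cdot/\epsilon) \ast V]$ also belongs to $\B(L^\infty, C_0)$, since their potentials inherit the local and modified distal Kato properties.

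The approximation by translations then follows at once: the uniform bound applied to the difference yields
$$
\|R_0(\eta^2)V(\cdot-h) - R_0(\eta^2)V\|_{\B(L^\infty, C_0)} \;\le\; \frac{\|V(\cdot-h) - V\|_\K}{4\pi} \to 0
$$
by translation continuity in $\K$. For the mollifications I would rewrite $\epsilon^{-3}\chi(\cdot/\epsilon) \ast R_0(\eta^2)V$ as $R_0(\eta^2)[\epsilon^{-3}\chi(\cdot/\epsilon)\ast V]$ and express the difference
$$
R_0(\eta^2)[\epsilon^{-3}\chi(\cdot/\epsilon)\ast V - V] \;=\; \int \epsilon^{-3}\chi(h/\epsilon)\, R_0(\eta^2)[V(\cdot-h) - V]\,\dd h
$$
as a weighted average of the translation differences just bounded, then invoke dominated convergence. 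The argument is essentially mechanical once Lemma \ref{charkato} is in hand; the only mild subtlety is ensuring the $C_0$-valuedness of the limit operator, and this is precisely what the $\D$-density step above accomplishes.
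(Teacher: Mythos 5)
Your proof cannot cover the stated generality. The hypothesis here is $V \in \K$ with the local and modified distal Kato properties, a class the paper explicitly describes as strictly larger than $\K_0$ because it also contains singular measures not in $L^1_{loc}$. Lemma~\ref{charkato}, and in particular the translation continuity $\|V(\cdot - h) - V\|_\K \to 0$ on which your core estimate rests, requires the extra hypothesis $V \in L^1_{loc}$: for a singular $V$ satisfying the two Kato conditions, the translated measures stay mutually singular, so $\|V(\cdot - h) - V\|_\K$ remains comparable to $2\|V\|_\K$ as $h \to 0$ and the bound $\|R_0(\eta^2)V(\cdot - h) - R_0(\eta^2)V\| \leq \|V(\cdot - h) - V\|_\K/4\pi$ gives nothing. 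You have silently restricted the lemma to $V \in \K_0$.

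There is also an algebraic slip: the identity $\epsilon^{-3}\chi(\cdot/\epsilon)\ast R_0(\eta^2)V = R_0(\eta^2)[\epsilon^{-3}\chi(\cdot/\epsilon)\ast V]$ is false, because the mollifier commutes with the translation-invariant free resolvent but not with multiplication by $V$. From the paper's use of the kernel $[\epsilon^{-3}\chi(\cdot/\epsilon)\ast R_0(\eta^2)](x,y)$ in the following proof, the mollification acts on the output variable $x$ (left composition with the convolution), and the corresponding translations are $\tau_h R_0(\eta^2)V$, where $\tau_h g = g(\cdot - h)$. Translating the potential gives instead $R_0(\eta^2)V(\cdot - h) = \tau_h R_0(\eta^2)V \tau_{-h}$, a conjugation rather than a left composition, and since $\|\tau_{-h} - I\|_{\B(L^\infty)} = 2$ for $h\ne 0$ these two convergences are not interchangeable. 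The intended proof runs the argument of Lemma~\ref{localdistal} with $R_0(\eta^2)$ in place of $(-\Delta)^{-1}$: the local Kato property makes the family $\{R_0(\eta^2)Vg : \|g\|_{L^\infty} \leq 1\}$ uniformly equicontinuous, the modified distal property makes it vanish uniformly at infinity, so it is relatively compact in $C_0$; on a relatively compact subset of $C_0$ the translation group converges uniformly, which is precisely $\|\tau_h R_0(\eta^2)V - R_0(\eta^2)V\|_{\B(L^\infty, C_0)} \to 0$, and averaging over $h$ against $\epsilon^{-3}\chi(h/\epsilon)$ then yields the mollification bound. This argument uses only the local and modified distal Kato properties of $V$, not density of $\D$, and therefore reaches the singular case the lemma is meant to include.
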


We use this in the proof of the next statement.

\begin{proposition} If $V \in \K$ has the local and modified distal Kato properties, then for $\im \eta \geq 0$ the families $A(\eta^2)$ and $A^*(\eta^2)$ are continuous up to the boundary in the $\B(L^\infty, C_0)$ and $\B(\K^*)$, respectively $\B(\M)$ and $\B(\K)$ norms, and compact.
\end{proposition}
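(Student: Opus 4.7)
My plan rests on three ingredients: the pointwise kernel bound $|R_0(\eta^2)(x,y) - R_0(\eta_0^2)(x,y)| \leq \frac{1}{4\pi}\min(2/|x-y|,\, |\eta-\eta_0|)$ valid for $\im\eta, \im\eta_0 \geq 0$ (from $|e^{i\eta r} - e^{i\eta_0 r}| \leq \min(2, |\eta-\eta_0| r)$, with the upper-half-plane exponential factors bounded by $1$); the Frostman-type bound (\ref{frostman}) together with the distal Kato property of $V$ (which follows from the modified distal property by Lemma \ref{modkato}); and the norm approximation by mollifications supplied by the immediately preceding lemma. The adjoint relation $A^*(\eta^2)=R_0(\ov{\eta^2})V$ lets me pair up the four claims via Schauder's theorem, so genuinely new estimates are needed in only two of the four norms.

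For continuity of $A(\eta^2)$ in $\B(\M)$ (equivalently of $A^*(\eta^2)$ in $\B(L^\infty)$), I would compute
\[
\|A(\eta^2) - A(\eta_0^2)\|_{\B(\M)} = \sup_y \int \frac{|e^{i\eta|x-y|} - e^{i\eta_0|x-y|}|}{4\pi|x-y|}\,d|V|(x),
\]
split at $|x-y|=R$, bound the near piece by $R|\eta-\eta_0|\|V\|_\K/(4\pi)$ using (\ref{frostman}), and bound the far piece by the distal-Kato tail of $V$. Balancing $R$ (e.g.\ $R=|\eta-\eta_0|^{-1/2}$) gives continuity up to the boundary; $C_0$-valuedness of the image is supplied by the preceding lemma, lifting this to $\B(L^\infty, C_0)$ continuity.

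The continuity in $\B(\K)$ and $\B(\K^*)$ is more subtle and is where I expect the main obstacle. Starting from $\|(A(\eta^2)-A(\eta_0^2))\mu\|_\K = \sup_z \int |x-z|^{-1} |[R_0(\eta^2)-R_0(\eta_0^2)]\mu|(x)\, d|V|(x)$, swapping the order of integration and using $\|\mu\|_\K = \sup_w \int d|\mu|(y)/|y-w|$ with $w=z$ reduces matters to controlling $\sup_{y,z}\int \frac{|y-z|\,|R_0(\eta^2)-R_0(\eta_0^2)|(x,y)\,d|V|(x)}{|x-z|}$. The triangle inequality $|y-z| \leq |x-y|+|x-z|$ then breaks this into two pieces, each handled by a further split at $|x-y|=R$ controlled by Frostman and the distal Kato property of $V$. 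Because a generic $\mu\in\K$ carries no distal structure of its own, all the decay must come from $V$, and the challenge is to distribute the $|\eta-\eta_0|$ smallness across both denominators simultaneously.

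For compactness, the preceding lemma yields $\|A^*(\eta^2)-\epsilon^{-3}\chi(\cdot/\epsilon)\ast A^*(\eta^2)\|_{\B(L^\infty,C_0)}\to 0$ as $\epsilon\to 0$. Each mollification sends the unit ball of $L^\infty$ to a family that is equicontinuous (by smoothness of $\chi_\epsilon$) and tight at infinity (since $|A^*g(y)|\leq f_{|V|}(y)/(4\pi)$ with $f_{|V|}\in C_0$ by Lemma \ref{localdistal}, uniformly in $g$), hence relatively compact in $C_0$ by Arzel\`{a}--Ascoli. A norm limit of compact operators is compact, so $A^*(\eta^2)$ is compact in $\B(L^\infty, C_0)$; compactness of $A(\eta^2)$ in $\B(\M)$ then follows from Schauder's theorem via the pairing $(C_0)^*=\M$, identifying $A$ with the Banach adjoint of $A^*|_{C_0}\in\B(C_0)$. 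The $\B(\K)$ and $\B(\K^*)$ cases go through by the same mollification-plus-Arzel\`{a}--Ascoli scheme adapted to those norms, using the $\K^*$-analogue of the tightness-from-$f_{|V|}$ argument.
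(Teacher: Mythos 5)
Your treatment of the $\B(\M)$ and $\B(L^\infty, C_0)$ cases is essentially correct and close in spirit to the paper's: the paper truncates to $\chi_{|x-y|<R}R_0(\eta^2)V$ and bounds the $\eta$-derivative of the truncated kernel by $R\|V\|_\K/(4\pi)$, whereas you bound the kernel difference directly via $|e^{i\eta r}-e^{i\eta_0 r}|\leq\min(2,|\eta-\eta_0|r)$; both close. Your Schauder route to $\B(\M)$ compactness is a valid alternative to the paper's observation that $V(x)R_0(\eta^2)(x,\cdot)$ is a $C_0$ family of measures, so the operator maps the unit ball into the closed convex hull of a compact set.

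The $\B(\K)$ and $\B(\K^*)$ continuity argument as you sketch it does not work, and the flaw is precisely where you flagged uncertainty. Your reduction to
$$
\sup_{y,z}\,|y-z|\int\frac{|R_0(\eta^2)-R_0(\eta_0^2)|(x,y)\,d|V|(x)}{|x-z|}
$$
is too strong: that quantity does not tend to zero as $\eta\to\eta_0$. Take $V$ supported in $B(0,1)$, $z=0$, $\eta,\eta_0\in\R$ distinct, and $|y|\gg|\eta-\eta_0|^{-1}$ with $(\eta-\eta_0)|y|$ near an odd multiple of $\pi$; then $|e^{i\eta|x-y|}-e^{i\eta_0|x-y|}|$ is close to $2$ for all $x\in\supp V$, $|x-y|\approx|y-z|$, and the displayed expression is comparable to $\frac{1}{2\pi}\int\frac{d|V|(x)}{|x|}$, a constant. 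Independently of this, after your triangle-inequality split the far piece of the first term becomes $\int_{|x-y|>R}\frac{|V(x)|\,dx}{|x-z|}$, a Kato tail with denominator centered at $z$ but cutoff centered at $y$; the distal Kato property of $V$ controls $\sup_w\int_{|x-w|>R}\frac{|V(x)|\,dx}{|x-w|}$, not this mixed quantity, so it gives nothing when $y$ and $z$ are far apart.

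The paper does not attempt a direct kernel estimate here. It proves the continuity statement only for $\B(L^\infty)$ and $\B(\M)$ by the truncation argument, and handles $\B(\K^*)$ by approximating $R_0(\eta^2)V$ in operator norm by the mollifications $\epsilon^{-3}\chi(\cdot/\epsilon)\ast R_0(\eta^2)V$ (using the local Kato property to make this uniform), showing the mollified kernel takes values in a compact subset of $\K^*_x$ when $V$ is first cut off to bounded support via the modified distal property, and then invoking that the closed convex hull of a compact set is compact; $\B(\K)$ compactness is the adjoint of $\B(\K^*)$ compactness. Note also that ``Arzel\`{a}--Ascoli adapted to $\K^*$'' is not the right tool: $\K^*$ is not a space of continuous functions, and what the paper actually uses is the mollification-plus-convex-hull argument, not equicontinuity.
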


\begin{proof} If $V \in \K$, then $R_0(\eta^2)(x, y) V(y) \in L^\infty_x \M_y \subset \B(L^\infty)$, where we again consider only Borel measures and Borel measurable functions.

If $V$ has the distal Kato property, then $R_0(\eta^2) V$ is uniformly approximated in $L^\infty_x \M_y$ by $\chi_{|x-y|<R} R_0(\eta^2)(x, y) V(y)$. However,
$$
\|\chi_{|x-y|<R} \partial_\eta R_0(\eta^2)(x, y) V(y)\|_{L^\infty_x M_y} \leq \frac {R}{4\pi} \|V\|_\K,
$$
so $R_0(\eta^2) V \in \B(L^\infty)$ and $VR_0(\eta^2) \in \B(\M)$ are continuous in $\eta$, as the uniform limit of this family of continuous approximations.

By the same method as in the proof of Lemma \ref{localdistal},
%As previously, let
%$$
%\tilde f_{V(y_0, \epsilon)}(y) = \int_{|x-y_0|>\epsilon} \frac {e^{i\eta|x-y|}V(x) \dd x}{|x-y|}.
%$$
%For fixed $y_0$ and for $|y-y_0|, |\tilde y-y_0|<\epsilon/2$
%$$
%|\tilde f_{V(y_0, \epsilon)}(y)-\tilde f_{V(y_0, \epsilon)}(\tilde y)| = \bigg|\int_{|x-y_0|>\epsilon} \frac {e^{i\eta|x-y|} V(x)}{|x-y|} - \frac {e^{i\eta|x-\tilde y|} V(x)}{|x-\tilde y|} \bigg| \leq \bigg(\frac {2}{\epsilon}+|\eta|\bigg)|y-\tilde y| \|V\|_\K.
%$$
%If $V$ has the local Kato property, then uniformly over $\eta$, $y_0$, and $y$
%$$
%\lim_{\epsilon \to 0} \sup_{|y-y_0|<c\epsilon} |\tilde f_{V(y_0, \epsilon)}(y) - \tilde f_V(y)| = 0.
%$$
for each fixed $\eta$, $R_0(\eta^2) V \in UC$. Its modulus of continuity only depends on $\eta$, $\|V\|_\K$, and the uniform vanishing rate from the local Kato property.

All these properties remain the same when replacing $V$ by $Vg$, where $\|g\|_{L^\infty} \leq 1$, so the image of the unit ball $\{R_0(\eta^2) V g \mid \|g\|_{L^\infty} \leq 1\}$ is uniformly equicontinuous, for bounded $\eta$.

If $V$ has the modified distal Kato property, $R_0(\eta^2) Vg$ also vanish uniformly (including in $\eta$) at infinity. Hence, by the Arzela-Ascoli theorem, for each $\eta$ $R_0(\eta^2) V \in \B(L^\infty, C_0)$ is compact.

A dual point of view is that, when $V$ has the local Kato property, $\frac {V(x)}{|x-y|} \in \M_x$ form a continuous family of measures and when $V$ has the modified distal Kato property this family vanishes at infinity. The same is true for $V(x)R_0(\eta^2)(x, y)$ for fixed $\eta$.

Integration against a $C_0$ (measure-valued) function defines a compact operator on $\M$, because the closed convex shell of a compact set is compact. Hence $V R_0(\eta^2)$ is compact in $\B(\M)$.

Similarly, approximating $R_0(\eta^2) V$ by $\epsilon^{-3} \chi(\cdot/\epsilon) \ast R_0(\eta^2) V$, $[\epsilon^{-3} \chi(\cdot/\epsilon) \ast R_0(\eta^2)](x, y)$ form a continuous family in $\K^*_x$, parametrized by $y$. If $V$ has the modified distal property, we can approximate $V$ by a bounded support cutoff $\chi_{|y| \leq R} V(y)$, such that for $y$ in this compact set $[\epsilon^{-3} \chi(\cdot/\epsilon) \ast R_0(\eta^2)](x, y)$ form a compact set in $\K^*_x$.

For $g \in \K^*$ $\|Vg\|_\M \leq \|V\|_\K \|g\|_{\K^*}$, so, when $\|g\|_{\K^*} \leq 1$, $\epsilon^{-3} \chi(\cdot/\epsilon) \ast R_0(\eta^2) Vg$ belongs to the closed convex shell of a compact set in $\K^*$, which is itself compact. Hence $R_0(\eta^2) V \in \B(\K^*)$ is approximated in norm by compact operators and is itself compact.

This implies that the adjoint $V R_0(\eta^2) \in \B(\K)$ is also compact.
\end{proof}

We next consider the two conditions for Wiener's theorem. Let
$$
S_0(t)(x, y)=\chi_{t \geq 0} \frac 1 {4\pi t} \delta_t(|x-y|)
$$
and
$$
S(t)(x, y)=V(x)S_0(t)(x, y).
$$
Recall the conditions for Wiener's theorem:
$$
\lim_{R \to \infty} \|\chi_{\rho \geq R} S(\rho)\|_{L^\infty_y \M_x L^1_\rho} = 0 \text{ (decay at infinity)}
$$
and for some $N \geq 1$
$$
\lim_{\epsilon \to 0} \|S^N(\rho+\epsilon)-S^N(\rho)\|_{L^\infty_y \M_x L^1_\rho} = 0 \text{ (continuity under translation)}.
$$

Clearly, the distal Kato condition is equivalent to the decay at infinity condition. The second, though, has a different nature.
\begin{lemma}\lb{second_cond} If $V \in \K_0$, meaning that $V \in \K$ has the local and modified distal Kato properties and $V \in L^1_{loc}$, then $S^2(t)$ is continuous under translation.
\end{lemma}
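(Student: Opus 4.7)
The plan is to reduce the claim to the case $V \in \D$ via the density $\K_0 = \ov{\D}_\K$ from Lemma \ref{charkato}, combined with a quadratic Lipschitz bound for the map $V \mapsto S^2_V$ into the target norm.

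First I would factor $S^2(\rho)(x,y) = V(x)\,(S_0 V S_0)(\rho)(x,y)$ and establish the uniform estimate $\|S^2_V\|_{L^\infty_y \M_x L^1_\rho} \leq \|V\|_\K^2/(4\pi)^2$. Integrating in $\rho$ via the identity $\int_0^\infty S_0(\rho)(x,z)\,\dd\rho = 1/(4\pi|x-z|)$ and applying Fubini collapse the $L^1_\rho$ mass to
$$
\int_{\R^3}\frac{|V(z)|}{(4\pi)^2\,|x-z|\,|z-y|}\,\dd z,
$$
and two successive applications of the Kato bound (first in $x$, then in $z$) give the estimate. Because $V \mapsto S^2_V$ is bilinear in the two occurrences of $V$, the same computation yields
$$
\|S^2_V - S^2_{\tilde V}\|_{L^\infty_y \M_x L^1_\rho} \les \|V - \tilde V\|_\K\,\bigl(\|V\|_\K + \|\tilde V\|_\K\bigr),
$$
so $V \mapsto S^2_V$ is continuous from $\K$ to $L^\infty_y \M_x L^1_\rho$.

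Next I would compute $(S_0 V S_0)(\rho)(x,y)$ explicitly. Evaluating the inner spherical convolutions gives, with $r = |x-y|$ and $C(\rho_1,\rho_2) := \{z : |x-z|=\rho_1,\; |z-y|=\rho_2\}$,
$$
(S_0 V S_0)(\rho)(x,y) = \frac{\chi_{r\leq\rho}}{8\pi r}\int_{(\rho-r)/2}^{(\rho+r)/2} \langle V\rangle_{C(\rho_1, \rho-\rho_1)}\,\dd\rho_1,
$$
which is consistent with $S_0^2(\rho)(x,y) = \chi_{r \leq \rho}/(8\pi)$ in the free case. For $V \in \D$ supported in $B(0, M)$ and for $x \in \supp V$, the $\rho$-support of this kernel lies in the window $[r, r + 4M]$ (width independent of $y$), the kernel is pointwise bounded by $\|V\|_\infty/(8\pi)$, has a single jump of that size at the light-cone boundary $\rho = r$, and has $\rho$-derivative bounded away from $\rho = r$ by $\|V\|_\infty/(8\pi r) + \|\nabla V\|_\infty/(8\pi)$. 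Combining these bounds gives
$$
\int_0^\infty \bigl|(S_0 V S_0)(\rho+\epsilon)(x,y) - (S_0 V S_0)(\rho)(x,y)\bigr|\,\dd\rho \les \epsilon\Bigl(\|V\|_\infty + \tfrac{M\|V\|_\infty}{r} + M\|\nabla V\|_\infty\Bigr),
$$
and multiplying by $|V(x)|$, integrating in $x$ (the singular $1/r$ being absorbed via $\int |V(x)|/|x-y|\,\dd x \leq \|V\|_\K$), and taking the supremum in $y$ yields $\|S^2_V(\cdot + \epsilon) - S^2_V(\cdot)\|_{L^\infty_y \M_x L^1_\rho} = O_V(\epsilon) \to 0$ for each fixed $V \in \D$.

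The density step is then a routine $3\eta$ argument: given $\eta > 0$ and $V \in \K_0$, pick $V_n \in \D$ with $\|V - V_n\|_\K$ small enough that the bilinear bound forces $\|S^2_V - S^2_{V_n}\|_{L^\infty_y \M_x L^1_\rho} < \eta/3$; since the target norm is translation-invariant, the triangle inequality reduces matters to $\|S^2_{V_n}(\cdot+\epsilon) - S^2_{V_n}(\cdot)\|_{L^\infty_y \M_x L^1_\rho}$, which vanishes as $\epsilon \to 0$ by the previous paragraph. The main obstacle sits in the explicit pointwise analysis: the finite speed of propagation forces a jump discontinuity in $\rho \mapsto (S_0 V S_0)(\rho)(x,y)$ at the light cone $\rho = r$, and the smooth piece inside the cone has a Lipschitz constant blowing up like $1/r$, so both apparent failures of uniformity must be absorbed by integrating against $|V(x)|$ via the Kato norm — exactly the regularity encoded in $V \in \K_0$.
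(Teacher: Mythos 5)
Your proof is correct in its essentials and takes a genuinely different route from the paper's, so let me compare. Both arguments share the first step: the bilinear bound $\|S^2_V - S^2_{\tilde V}\|_{L^\infty_y\M_x L^1_\rho} \les \|V-\tilde V\|_\K(\|V\|_\K + \|\tilde V\|_\K)$ reduces the claim to a dense class. After that the methods diverge. The paper reduces only to $V\geq 0$ of compact support in $\M\cap\K\cap L^1$ (not to $\D$), localizes in $t$ as well, writes $S_1(x,y)(t)$ as an integral over a piece of a revolution ellipsoid, shows $S_1\in L^\infty_{x,y}L^1_t$ by Fubini, upgrades to $C_{x,y}L^1_t$ by $L^1$-approximating $V$ by continuous functions, and then concludes equicontinuity under $t$-translation from the Kolmogorov--Fr\'echet compactness criterion: the argument is entirely soft, never differentiating the kernel in $\rho$. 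You instead go all the way to $V\in\D$, extract the same ellipsoid/circle-average structure in closed form, and then prove the modulus of continuity bound $\textit{quantitatively}$ by estimating the jump at the light cone $\rho=r$ and the Lipschitz constant of the absolutely continuous piece. Your approach is more explicit and transparently shows why the $\rho$-continuity is uniform in $(x,y)$; the paper's is more robust since it does not require smoothness of $V$ in the second step. Either buys a clean $3\eta$ conclusion.

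Two small inaccuracies in the quantitative step, both harmless for $V\in\D$. First, the Lipschitz constant of $\rho\mapsto (S_0VS_0)(\rho)(x,y)$ inside the cone involves second derivatives of $V$, not only $\|V\|_\infty$ and $\|\nabla V\|_\infty$: as the circle of intersection degenerates at the endpoints of the $\rho_1$-window, $\partial_\rho\ell$ blows up, and the surviving contribution to $\partial_\rho\langle V\rangle_C$ is $\tfrac12\partial_\rho(\ell^2)\cdot(\text{Hessian of }V)$, which is bounded but carries $\|D^2V\|_\infty$. Second, the $1/r$ blow-up you highlight as ``the main obstacle'' is actually spurious: symmetrizing the window by $\rho_1 = \rho/2 + s$, $s\in[-r/2,r/2]$, eliminates the $\rho$-dependence of the limits of integration, and both $\partial_\rho c = -s/r$ and $\partial_\rho(\ell^2)$ stay bounded, so the interior Lipschitz constant is $O_V(1)$ with no factor of $1/r$ and the Kato-norm absorption you describe is unnecessary. (The naive unsymmetrized differentiation produces a $1/r$ boundary term, but it cancels against a matching $1/r$ piece in the interior term.) Neither issue affects the conclusion.
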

The case when $V \not \in L^1_{loc}$ (is a singular measure) will be treated elsewhere.
\begin{proof}
If $V$ has the local and distal Kato properties, $S_0(t)V$ is well-approximated in $L^\infty_x \M_{y, t}$ by $\chi_{\epsilon \leq t \leq R} S_0(t) V$ and, if $V$ also has the modified distal Kato property, by $\chi_{\epsilon \leq t \leq R} S_0(t)$ $\chi_{|y| \leq R_0} V$.

Hence with no loss of generality we can take $V \geq 0 \in \M \cap \K$ of compact support and prove continuity under translation simply for (a power of) $\chi_{\epsilon \leq t \leq R} S_0(t) V$. It then further suffices to prove it for (a finite product of) small neighborhoods of every $t>0$.

For $[a_1, b_1], [a_2, b_2] \subset (0, \infty)$, consider the product
$$
S_1=\chi_{[a_1, b_1]}(t) S_0(t) V \ast_t \chi_{[a_2, b_2]}(t) S_0(t).
$$
For each $x,y$ and for $t \in [a_1+a_2, b_1+b_2]$, its value is given by the integral
$$
S_1(x, y)(t) = \int_{\substack{|x-z|+|z-y|=t \\ a_1 \leq |x-z| \leq b_1 \\ a_2 \leq |z-y| \leq b_2}} \frac {V(z) \dd z}{|x-z| |z-y|},
$$
on a piece of the revolution ellipsoid $\{z \mid |x-z|+|z-y|=t\} \subset \R^3$.

If $V \in L^1_{loc}$, then its compact support restriction $\chi_{|y| \leq R_0} V$ is in $L^1$. By Fubini's theorem, $S_1 \in L^\infty_{x, y} L^1_t$ and
$$
\|S_1\|_{L^\infty_{x, y} L^1_t} \les \|V\|_{L^1},
$$
with a factor that depends on the curvature of the ellipsoids. However, due to our previous reductions, the curvature is bounded from above (and below, but not important here) on the pieces we consider.

Approximating $V$ in the $L^1$ norm by continuous functions, we see that in fact $S_1 \in C_{x, y} L^1_t$. Then, for $x$ and $y$ in a compact set, the functions $S_1(x, y)(t)$ form a compact family in $L^1_t$. By the Kolmogorov-Fr\'{e}chet theorem, they are equicontinuous under translation, which implies the continuity under translation of $S_1$ in $L^\infty_{x, y} L^1_t$.

Hence the corresponding piece of $S^2=S_0V \ast S_0V$ is continuous under translation in $\K_y L^\infty_x L^1_t$ (which is stronger than needed, but true only for the approximation on a finite $t$ interval, not at $0$ or infinity) and the conclusion follows.
\end{proof}

We end this section with a characterization of the dual space $\K^*$.

\begin{proposition}\lb{katodual} $g \in \K^*$ if and only if there exists a positive measure $\mu$, with $\|\mu\|_\M=\mu(\R^3)=\|g\|_{K^*}$, such that $|g| \leq (-\Delta)^{-1} \mu$ almost everywhere.
\end{proposition}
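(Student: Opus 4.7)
The sufficiency direction is a one-line Fubini--Tonelli computation. If $|g| \leq (-\Delta)^{-1}\mu$ a.e.\ for some $\mu \in \M_+$, then for $V \in \K$
$$
|\langle g,V\rangle| \leq \int |g|\,|V|\,dx \leq \int (-\Delta)^{-1}\mu \cdot |V|\,dx = \int (-\Delta)^{-1}|V|\,d\mu \leq \|(-\Delta)^{-1}|V|\|_\infty \,\mu(\R^3),
$$
and the last factor is a constant multiple of $\|V\|_\K\,\|\mu\|_\M$ by the very definition of the Kato norm. This gives $\|g\|_{\K^*} \leq \|\mu\|_\M$ and reproduces weak duality.

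For the converse, use that $\K$ is a Banach lattice, so $\K^*$ is one and $\||g|\|_{\K^*} = \|g\|_{\K^*}$; replace $g$ by $|g|$ and assume $g \geq 0$. Write $C := \|g\|_{\K^*}$. The claim then amounts to strong LP duality: the primal
$$
C = \sup\bigl\{\textstyle\int gV\,dx : V \geq 0,\ (-\Delta)^{-1}V \leq 1 \text{ pointwise}\bigr\}
$$
has dual
$$
C' = \inf\bigl\{\mu(\R^3) : \mu \in \M_+,\ (-\Delta)^{-1}\mu \geq g \text{ a.e.}\bigr\},
$$
and what must be shown is $C = C'$ \emph{with attainment}. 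I would prove this by approximation: set $g_n := \min(g,n)\chi_{B_n}$, so $g_n \uparrow g$ and $\|g_n\|_{\K^*} \leq C$. For each bounded compactly supported $g_n$, find $\mu_n \in \M_+$ with $\mu_n(\R^3) \leq \|g_n\|_{\K^*}$ and $(-\Delta)^{-1}\mu_n \geq g_n$ a.e.\ by Hahn--Banach separation of the convex set $\{(-\Delta)^{-1}\mu : \mu \in \M_+,\ \|\mu\|_\M \leq \|g_n\|_{\K^*}\}$ from the translated positive cone $\{h : h \geq g_n \text{ a.e.}\}$ in $L^1(B_n)$; any separating functional must be nonnegative (otherwise the infimum on the cone is $-\infty$), and the resulting inequality, paired with the definition of $\|g_n\|_{\K^*}$, collapses to feasibility. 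Equivalently, $\mu_n$ is the Riesz mass of the least positive superharmonic majorant (r\'eduite / balayage) of $g_n$, which in $\R^3$ takes the form $(-\Delta)^{-1}\mu_n$ once one demands vanishing at infinity.

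The limit step uses Banach--Alaoglu in $\M = C_0^*$: extract $\mu_n \rightharpoonup \mu$ with $\mu(\R^3) \leq \liminf \mu_n(\R^3) \leq C$. For $\varphi \in C_c^\infty$ with $\varphi \geq 0$ the potential $(-\Delta)^{-1}\varphi$ lies in $C_0$ (a consequence of the $\K_0$ characterization in Lemma \ref{localdistal}), so
$$
\int \varphi\,(-\Delta)^{-1}\mu\,dx = \int (-\Delta)^{-1}\varphi\,d\mu = \lim_n \int (-\Delta)^{-1}\varphi\,d\mu_n = \lim_n \int \varphi\,(-\Delta)^{-1}\mu_n\,dx \geq \lim_n \int \varphi g_n\,dx = \int \varphi g\,dx,
$$
by weak-$*$ convergence tested against $(-\Delta)^{-1}\varphi \in C_0$ on the left and monotone convergence on the right; density of $C_c^\infty_+$ then forces $(-\Delta)^{-1}\mu \geq g$ a.e., and together with $\mu(\R^3) \leq C$ and the weak-duality bound $\mu(\R^3) \geq \|g\|_{\K^*} = C$ from the first paragraph, we get $\mu(\R^3) = C$.

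The main obstacle is the finite-$n$ separation. The naive sublinear functional $p(h) = C\|h_+\|_\infty$ on $C_0$ does \emph{not} dominate the pullback $TV \mapsto \langle g,V\rangle$ along the isometric embedding $T = (-\Delta)^{-1}$, because for signed $V$ the positive part $(TV)_+$ can be much smaller than $\|V_+\|_\K$; this rules out a one-step positive Hahn--Banach extension of $g$ in $C_0$ giving a positive measure directly. The remedy is either to localize to a bounded domain $B_n$ so that the constraint cone is downward-saturated and weak-$*$ closed in $L^1(B_n)$ (where the separating linear functional can be taken in $L^\infty(B_n)$ and positivity is enforced by the cone structure), or to invoke classical balayage/r\'eduite directly. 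Either route depends crucially on $g$ being a genuine $L^{3,\infty}$ function rather than a purely abstract functional, since the construction of $\mu$ is pointwise in nature.
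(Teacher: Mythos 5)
The easy direction and the weak-$*$ limit step in your argument are both correct. The gap is the finite-$n$ step, which you flag yourself as ``the main obstacle'' and do not carry out. The localized Hahn--Banach separation you sketch is far from complete: the constraint set $\{(-\Delta)^{-1}\mu : \mu\geq 0,\ \mu(\R^3)\leq C_n\}$ restricted to $B_n$ is not a subset of $L^\infty(B_n)$ (potentials of finite measures are unbounded near atoms), so the topology and the Banach space in which you separate and in which the separating functional lives need to be pinned down; you also need the separation to be strict, and you need a reason why a separating functional can be taken in $\K$ before you can feed it back into the definition of $\|g_n\|_{\K^*}$. Pointing to classical balayage/r\'eduite is a reference, not an argument: what is needed is precisely a quantitative bound on the total Riesz mass of the r\'eduite, and that is the nontrivial half of the statement. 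As written, the existence of $\mu_n$ with the stated mass bound is asserted, not established.

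The paper avoids abstract separation entirely by constructing the least superharmonic majorant explicitly. It sets $M=\{\mu\geq 0 : (-\Delta)^{-1}\mu \leq (-\Delta)^{-1}\delta_0\}$ and $g^\#(x)=\sup_{\mu\in M}(\mu*g)(x)$, and then checks: $g^\#\geq g$ a.e.\ by Lebesgue differentiation, since normalized indicators of balls belong to $M$; $\|g^\#\|_{\K^*}\leq\|g\|_{\K^*}$ by duality, because any measurable selection $x\mapsto\mu_x\in M$ defines an operator $\breve f(y)=\int\mu_x(x-y)f(x)\dd x$ with $\|\breve f\|_\K\leq\|f\|_\K$ (convolving against members of $M$ does not increase the Newtonian potential), so $\int f\,\tilde g=\int\breve f\,g$ stays controlled; $g^\#$ is superharmonic by a direct check of the sub-mean-value inequality from the definition; and the Riesz decomposition then gives $g^\#=(-\Delta)^{-1}\mu$, with the total mass read off by pairing against a fixed bounded superharmonic function. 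This is exactly the balayage together with the quantitative mass control you would need at your finite-$n$ stage, and the $\breve f$ device is what replaces your separation step.
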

As an obvious consequence, which can be proved independently, bounded functions of compact support are in $\K^*$. However, $\K^*$ is not separable, just as $L^{3, \infty}$ and in general $L^{p, \infty}$ are not separable.

One can also consider the closure of the class of test functions $(\K^*)_0=\ov {\D}_{\K^*} \subset L^{3, \infty}_0$, which is separable.

\begin{proof} One direction is almost obvious: if such a measure $\mu$ exists, then $g \in \K^*$ with $\|g\|_{\K^*} \leq \|\mu\|_\M$.

Conversely, given $g \in \K^*$, constructing a corresponding measure $\mu$ is more difficult.

With no loss of generality take $g \geq 0$; it also suffices to test it against positive functions. To begin with, consider the set
$$
M=\{\mu \in \M \mid \mu \geq 0, (-\Delta)^{-1} \mu \leq (-\Delta)^{-1} \delta_0\}.
$$
The set $M$ is nonempty because it contains $\delta_0$, but it also contains $\frac 1 {|B(0, R)|} \chi_{B(0, R)}$ for $R>0$, where $|B(0, R)|=4\pi R^3/3$, and many other measures.

It is easier to only consider the absolutely continuous measures in $M$ (and, in fact, one can consider only a certain closure of the spherical averages mentioned above). If $\mu \in M$ then $\mu(-x) \in M$, so this sign makes no difference.

Let
$$
g^\#(x) = \sup_{\mu \in M} (\mu*g)(x).
$$
Since $g \in L^{3/2, 1}$, for almost every $x$ $\lim_{R \to 0} \frac 1 {|B(0, R)|} (\chi_{B(0, R)}*g)(x)=g(x)$, so $g^\#(x) \geq g(x)$ almost everywhere. Also note that $((-\Delta)^{-1} \delta_x)^\# = (-\Delta)^{-1} \delta_x$.

Let
$$
\tilde g(x)= \int_{\R^3} \mu_x(x-y) g(y) \dd y
$$
for some measurable choice of $\mu_x \in M$. Then for $f \geq 0$
$$
\int_{\R^3} f(x) \tilde g(x) \dd x = \int_{\R^3} \breve f(x) g(x) \dd x,
$$
where
$$
\breve f(y) = \int_{\R_3} \mu_x(x-y) f(x) \dd x.
$$
The two operators are different: while $\tilde g$ comes from a maximal operator, $\breve f$ is related to stopping time arguments.

Since $g \in \K^*$,
$$
\int_{\R^3} \breve f(x) g(x) \dd x \leq \|\breve f\|_\K \|g\|_{\K^*}.
$$
But
$$
\|\breve f\|_\K = 4\pi \|(-\Delta)^{-1} \breve f\|_{L^\infty} = \sup_x \int_{\R^3} \frac 1 {|x-y|} \int_{\R^3} \mu_z(z-y) f(z) \dd z \dd y.
$$
Interchanging the order of integration, since $\mu_z \in M$
$$
\int_{\R^3} \frac 1 {|x-y|} \mu_z(z-y) \dd y \leq \frac 1 {|x-z|}.
$$
Therefore $\|\breve f\|_\K \leq \|f\|_\K$. It also follows that $\|\tilde g\|_{\K^*} \leq \|g\|_{\K^*}$, then that $\|g^\#\|_{\K^*} \leq \|g\|_{\K^*}$, and finally that they are equal, since $g^\# \geq g$. In particular, this shows that $g^\# \in L^{3, \infty}$.

Next, we prove that $g^\#$ is superharmonic. It suffices to show that for each $x$ and $R>0$
$$
g^\#(x) \geq \frac 1 {|B(x, R)|} \int_{|x-y| \leq R} g^\#(y) \dd y,
$$
which further reduces to proving that for any measurable choice of $\mu_x \in M$
$$
g^\#(x) \geq \frac 1 {|B(x, R)|} \int_{|x-y| \leq R} \int_{\R^3} \mu_y(y-z) g(z) \dd z \dd y.
$$
Interchanging the order of integration, we need to bound the expression
$$
I_x(z) = \frac 1 {|B(x, R)|} \int_{|x-y| \leq R} \int_{\R^3} \mu_y(y-z) \dd y.
$$
This is not translation-invariant, but the process is the same for each $x$. Note that
$$\begin{aligned}
(-\Delta)^{-1} I_x &= \frac 1 {4\pi |B(x, R)|} \int_{\R^3} \frac 1 {|z-z_0|} \int_{|x-y| \leq R} \int_{\R^3} \mu_y(y-z) \dd y \dd z \\
&\leq \frac 1 {4\pi |B(x, R)|} \int_{|x-y| \leq R} \frac 1 {|y-z_0|} \leq (-\Delta)^{-1} \delta_x,
\end{aligned}$$
since $\mu_y \in M$ and $\frac 1 {|B(0, R)|} \chi_{B(0, R)} \in M$ as well. But then
$$
g^\#(x) \geq \int_{\R^3} I_x(z) g(z) \dd z
$$
by the definition of $g^\#$, finishing the proof of superharmonicity.

Since $g^\#$ is superharmonic and decays at infinity, there exists a positive measure $\mu \geq 0$ such that $g^\#=(-\Delta)^{-1} \mu$. Then for $f \in \K$, $f \geq 0$, and $h=(-\Delta)^{-1} f \geq 0$, bounded and superharmonic,
$$
\int_{\R^3} f g^\# \dd x= \int_{\R^3} h \mu \leq \|h\|_{L^\infty} \|g\|_{\K^*}.
$$

Let $h$ be such that $0 \leq h \leq 1$ and $h=1$ on $B(0, R)$ (this can be obtained for example as $(-\Delta)^{-1}$ applied to a uniform measure on the boundary of a ball). Then $\mu(B(0, R)) \leq \|g\|_{\K^*}$. By passing to the limit we obtain that $\mu \in \M$ and $\|\mu\|_\M \leq \|g\|_{\K^*}$, which we wanted to prove.
\end{proof}

\section{Spectrum of the Hamiltonian}\lb{b}

\subsection{Preliminaries} Assuming that $V$ has the local and modified distal Kato properties, again note that $A(\lambda)=V R_0(\lambda) \in \B(\M)$ is compact for all $\lambda \in \C \setminus [0, \infty)$. By Weyl's theorem \cite{resi4} \cite{weyl} it follows that, when $V \in \K_0$, $-\Delta$ and $H$ have the same essential spectrum, namely $\sigma_{ess}(H)=\sigma_{ess}(-\Delta)=[0, \infty)$.

However, much more is true: $A(\lambda)$ is not only holomorphic and compact inside $\C \setminus [0, \infty)$, but also continuous (hence uniformly bounded) and compact up to the boundary, with continuous and bounded derivatives (all except the first in weighted spaces) on the boundary.

On the boundary $[0, \infty)$, the complex derivative of $V R_0(\lambda)$ is still continuous and bounded, but only in $\B(\M, \K)$. The second and higher complex derivatives live in weighted spaces with progressively higher weights: $\partial^k_\lambda A(\lambda) \in \B(\langle x \rangle^{1-k} \M, \langle x \rangle^{k-1} \K)$.

Cauchy's theorem and Cauchy's formulae are true due to the continuity of $A(\eta^2)$ up to the boundary. Including the other sheet of the Riemann surface, similar results hold in exponentially weighted spaces.

Furthermore, the same holds uniformly when replacing $V$ by $tV$ for $|t| \leq 1$. This leads to a much more precise characterization of $\sigma(H)$.

To determine the discrete spectrum of $H$, we shall use the following version of Rouch\'e's theorem, see \cite[Theorem 2.2]{gohsig}. We state a simplified version, assuming analiticity (no poles):
\begin{theorem}[Rouch\'e/Gohberg--Sigal]\lb{rouche} Let $O \subset \C$ be a simply connected open region with rectifiable boundary $\partial O$ and let $A:\ov O \to \B(X, \tilde X)$ be a family of operators between two Banach spaces, analytic and Fredholm ($\dim \ker A(z)<\infty$, $A(z)(X)$ is closed in $\tilde X$, and $\dim \tilde X/A(z)(X)< \infty$) at each point $z \in O$ and continuous on $\ov O$, such that $A(z)$ is invertible for $z \in \partial O$. Consider another family of operators $B: \ov O \to \B(X, \tilde X)$, analytic in $O$ and continuous on $\ov O$, such that $\sup_{z \in \partial O} \|A^{-1}(z)B(z)\|_{\B(X)} <1$. Then $A$ and $A+B$ have the same number of zeros in $O$.
\end{theorem}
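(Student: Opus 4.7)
The plan is to follow the standard Gohberg--Sigal argument, reducing the operator-valued Rouché statement to an integer-valued continuity argument along the linear homotopy $A_t := A + tB$, $t \in [0,1]$.

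First, I would check that $A_t(z)$ stays invertible on $\partial O$ for every $t \in [0,1]$. On the boundary we can factor
$$
A(z) + tB(z) = A(z)\bigl(I + t\, A^{-1}(z) B(z)\bigr),
$$
and since $\sup_{z \in \partial O}\|A^{-1}(z) B(z)\|_{\B(X)} < 1$, the factor $I + t A^{-1}(z) B(z)$ is invertible for every $t \in [0,1]$ and every $z \in \partial O$ by a Neumann series, uniformly in $t$ and $z$. Analyticity of $A_t$ inside $O$ and continuity on $\ov O$ are inherited from $A$ and $B$, and Fredholmness of $A_t(z)$ in $O$ follows because Fredholmness is preserved under bounded perturbations within the connected homotopy class, using invertibility on $\partial O$ as the anchor.

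Second, I would introduce the appropriate counting function. For an analytic Fredholm family $F(z)$ on $\ov O$ that is invertible on $\partial O$, the set of points in $O$ where $F$ fails to be invertible is discrete and, by compactness of $\ov O$, finite; call them $z_1,\dots,z_m$. At each such $z_j$ the Gohberg--Sigal multiplicity $n_j$ is defined via a canonical system of root functions (a Banach-space analogue of Jordan chains for operator pencils), and the basic Gohberg--Sigal residue formula asserts
$$
N(F) := \sum_{j=1}^m n_j \;=\; \frac{1}{2\pi i} \oint_{\partial O} \tr\bigl(F^{-1}(z) F'(z)\bigr)\, dz,
$$
where the right-hand side is interpreted either via the Smith normal form reduction that makes the integrand trace-class at each $z_j$, or, in the scale where $F = I + K(z)$ with $K(z)$ compact (our case of interest), via regularized Fredholm determinants.

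Third, I would apply this to $F = A_t$. By the first step, $A_t^{-1}(z)$ exists and depends jointly continuously on $(z,t) \in \partial O \times [0,1]$; hence $A_t^{-1} A_t'$ is norm-continuous there, and the line integral $t \mapsto N(A_t)$ is a continuous $\Z$-valued function on $[0,1]$, so it is constant. Evaluating at $t=0$ and $t=1$ gives $N(A) = N(A+B)$, which is the conclusion.

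The main obstacle is the second step: setting up a Banach-space notion of multiplicity and proving the residue formula without access to scalar determinants. I would handle this by quoting the Gohberg--Sigal machinery of canonical root systems and Smith factorization at each $z_j$, which locally reduces the logarithmic integral to a finite-dimensional computation of pole orders. Everything else in the argument (the Neumann-series homotopy and the "continuous integer is constant" principle) is standard and robust, so once the multiplicity/residue dictionary is in place the theorem follows immediately.
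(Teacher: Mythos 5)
The paper does not prove this theorem; it cites it directly as \cite[Theorem 2.2]{gohsig} and moves on. So there is no in-paper proof to compare against, and your proposal has to be judged on its own.

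Your outline follows the standard Gohberg--Sigal homotopy argument, and most of it is sound: the Neumann-series factorization $A_t = A(I + tA^{-1}B)$ on $\partial O$, the reduction to the logarithmic residue integral, and the ``continuous integer is constant'' step are all the right ideas. However, the step where you write that ``Fredholmness of $A_t(z)$ in $O$ follows because Fredholmness is preserved under bounded perturbations within the connected homotopy class, using invertibility on $\partial O$ as the anchor'' is a genuine gap. Fredholmness is \emph{not} preserved under arbitrary bounded perturbations; it is preserved under compact perturbations and under perturbations small in operator norm, but $tB(z)$ is neither assumed compact nor uniformly small on the interior of $O$. The factorization $A_t = A(I+tA^{-1}B)$ is only available where $A$ is invertible, i.e.\ near $\partial O$, so it cannot be used to propagate Fredholmness to the interior, and ``anchoring'' on the boundary does not rescue this. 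Since the Gohberg--Sigal counting function is only defined (and only integer-valued) for normal, Fredholm-valued families, your step 3 silently uses a property of $A_t$ that the hypotheses as stated do not furnish. The cleanest repairs are either to add the hypothesis that $A_t$ is Fredholm in $O$ for all $t\in[0,1]$ (equivalently, that $B$ takes values in perturbations that preserve Fredholmness of $A$), or to note that in every application in this paper $A-I$ and $B$ are compact operator families, so $A_t$ is automatically a compact perturbation of the identity and Fredholm for free.
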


To establish invertibility on the boundary, we use the following extremely useful result of Fechbach \cite{yaj}:
\begin{lemma}\lb{fechbach} Let $X=X_0 \oplus X_1$ and $Y=Y_0 \oplus Y_1$ be direct sum decompositions of vector spaces $X$ and $Y$. For $L \in \B(X, Y)$ consider its matrix decomposition
$$
L=\begin{pmatrix}
L_{00} & L_{01}\\
L_{10} & L_{11}
\end{pmatrix},
$$
where $L_{ij} \in \B(X_j, Y_i)$. Assume $L_{00}$ is invertible. Then $L$ is invertible if and only if $D=L_{11}-L_{10} L_{00}^{-1} L_{01}$ is invertible, in which case
$$
L^{-1}=\begin{pmatrix}
L_{00}^{-1} + L_{00}^{-1} L_{01} D^{-1} L_{10} L_{00}^{-1} & -L_{00}^{-1} L_{01} D^{-1}\\
-D^{-1} L_{10} L_{00}^{-1} & D^{-1}
\end{pmatrix}.
$$
\end{lemma}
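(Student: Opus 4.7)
The plan is to prove this via an explicit block $LDU$-type factorization (the Schur complement / Feshbach decomposition), which makes both the invertibility criterion and the formula for $L^{-1}$ immediate consequences of the invertibility of the three factors.

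First I would write down the identity
$$
L = \begin{pmatrix} I & 0 \\ L_{10} L_{00}^{-1} & I \end{pmatrix}
\begin{pmatrix} L_{00} & 0 \\ 0 & D \end{pmatrix}
\begin{pmatrix} I & L_{00}^{-1} L_{01} \\ 0 & I \end{pmatrix},
$$
which one verifies by direct matrix multiplication using $D = L_{11} - L_{10} L_{00}^{-1} L_{01}$. This step uses only that $L_{00}$ is invertible, so that $L_{00}^{-1} L_{01} \in \B(X_1, X_0)$ and $L_{10} L_{00}^{-1} \in \B(Y_0, Y_1)$ make sense.

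Next I would observe that the two block-triangular factors have trivial inverses,
$$
\begin{pmatrix} I & 0 \\ L_{10} L_{00}^{-1} & I \end{pmatrix}^{-1} = \begin{pmatrix} I & 0 \\ -L_{10} L_{00}^{-1} & I \end{pmatrix},\quad
\begin{pmatrix} I & L_{00}^{-1} L_{01} \\ 0 & I \end{pmatrix}^{-1} = \begin{pmatrix} I & -L_{00}^{-1} L_{01} \\ 0 & I \end{pmatrix},
$$
and are therefore invertible in $\B(X)$ and $\B(Y)$ respectively. Consequently $L$ is invertible iff the middle block-diagonal factor is, iff $L_{00}$ and $D$ are both invertible; given the standing hypothesis on $L_{00}$, this reduces to the invertibility of $D$. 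This proves the equivalence.

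Finally, assuming $D$ is invertible, I would invert each of the three factors and multiply them in the reverse order to obtain
$$
L^{-1} = \begin{pmatrix} I & -L_{00}^{-1} L_{01} \\ 0 & I \end{pmatrix}
\begin{pmatrix} L_{00}^{-1} & 0 \\ 0 & D^{-1} \end{pmatrix}
\begin{pmatrix} I & 0 \\ -L_{10} L_{00}^{-1} & I \end{pmatrix},
$$
and carrying out the two products yields precisely the claimed $2 \times 2$ matrix. I do not anticipate any real obstacle: the entire argument is algebraic and takes place at the level of bounded operators, with no functional-analytic subtlety beyond the standing assumption that $L_{00}^{-1} \in \B(Y_0, X_0)$. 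The only thing to be careful about is bookkeeping of the domains and codomains of the off-diagonal blocks so that each product $L_{10} L_{00}^{-1}$, $L_{00}^{-1} L_{01}$, and $L_{10} L_{00}^{-1} L_{01}$ lives in the correct space.
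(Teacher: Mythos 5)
Your proof is correct: the block $LDU$ (Schur complement) factorization is the standard route to the Feshbach formula, and a direct computation confirms both the factorization of $L$ and the resulting expression for $L^{-1}$. The paper itself does not prove this lemma --- it is quoted from Yajima's reference --- so there is no in-text argument to compare against. One cosmetic slip: the lower-triangular factor $\begin{pmatrix} I & 0 \\ L_{10}L_{00}^{-1} & I \end{pmatrix}$ lives in $\B(Y)$ and the upper-triangular factor $\begin{pmatrix} I & L_{00}^{-1}L_{01} \\ 0 & I \end{pmatrix}$ lives in $\B(X)$, so your ``respectively'' has them reversed; this does not affect the argument.
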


\begin{observation}
Often it suffices to consider $X=Y$ and $X_i=Y_i$.

Suppose that $D f = 0$. Then $\ds L \begin{pmatrix} -L_{00}^{-1} L_{01} f \\ f \end{pmatrix} = 0$.
\end{observation}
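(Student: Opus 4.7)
The statement to prove is a direct block-matrix computation, so my plan is to simply expand the action of $L$ on the given vector using the matrix decomposition. The first component of $L\begin{pmatrix} -L_{00}^{-1} L_{01} f \\ f \end{pmatrix}$ is $L_{00}(-L_{00}^{-1}L_{01}f) + L_{01}f$, which telescopes to $0$ since $L_{00}$ and $L_{00}^{-1}$ cancel (this is where the hypothesis that $L_{00}$ is invertible, inherited from Lemma \ref{fechbach}, is used). The second component is $L_{10}(-L_{00}^{-1}L_{01}f) + L_{11}f = (L_{11} - L_{10}L_{00}^{-1}L_{01})f = Df$, which vanishes by hypothesis.

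The only thing to say beyond the computation is a brief interpretive remark: this observation makes the connection between the Schur complement $D$ (which encodes invertibility of $L$) and an actual element of $\ker L$ completely explicit. Namely, every element of $\ker D$ lifts to an element of $\ker L$ by prepending $-L_{00}^{-1}L_{01}f$, and conversely one sees from the matrix representation that every element of $\ker L$ has this form (project onto $X_1$ to recover $f$; then the $X_0$ component is forced to be $-L_{00}^{-1}L_{01}f$ by the vanishing of the first block equation). This provides a canonical isomorphism $\ker L \cong \ker D$.

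There is no obstacle here: the proof is a one-line computation, and the assumption $X=Y$, $X_i=Y_i$ simply makes the column vector $\begin{pmatrix} -L_{00}^{-1}L_{01}f \\ f \end{pmatrix}$ live in the same space $X$ as the domain of $L$, so that $\ker L \subset X$ makes sense in the usual way. In the proof I would just display the two-line expansion above and note the cancellation.
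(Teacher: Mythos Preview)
Your proposal is correct. The paper states this observation as a remark without proof, and your direct block-matrix expansion is exactly the natural one-line verification. One small quibble: the computation does not actually require $X=Y$ and $X_i=Y_i$; since $L_{01}:X_1\to Y_0$ and $L_{00}^{-1}:Y_0\to X_0$, the vector $\begin{pmatrix} -L_{00}^{-1}L_{01}f \\ f \end{pmatrix}$ already lies in $X_0\oplus X_1=X$ regardless, so the first sentence of the observation is a separate contextual remark rather than a hypothesis needed for the kernel statement.
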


While this is suitable in the context of Hilbert spaces, where it is easy to obtain direct sum decompositions, in the more general case of Banach spaces one deals with short exact sequences, which need not split. Below we prove a more general result that holds in this generality.

If $X$ is a normed vector space, a natural norm on any quotient space $X/X_0$ is $\|x+X_0\|_{X/X_0}=\inf_{x_0 \in X_0} \|x+x_0\|_X$. In general, this is a seminorm and only becomes a norm if $X_0 \subset X$ is closed. The projection $P:X \to X/X_0$, $P(x)=x+X_0$, is bounded with a norm of at most $1$: $\|Px\|_{X/X_0} \leq \|x\|_X$ (and exactly $1$ in Banach spaces by Riesz's lemma).

Another situation in which a canonical norm is defined is when $X_0=\ker L$ for some operator $L$ bounded on $X$. Then $X/\ker L \simeq L(X)=\im L$ and a natural norm on $L(X)$ is $\|y\|_{L(X)}=\inf_{Lx=y} \|x\|_X$. The induced map $\ov L: X/X_0 \to L(X)$ is an isometry.

If $L(X) \subset Y$, then another norm on $L(X)$ is the restriction of the $Y$ norm. The two norms are not necessarily equivalent: for $y \in L(X)$
$$
\|y\|_X \leq \|L\|_{\B(X)} \inf_{Lx=y} \|x\|_X = \|L\|_{\B(X)} \|y\|_{L(X)},
$$
but the converse need not be true. However, the restriction of the $Y$ norm to $L(X)$ induces a new norm on $X/X_0$, namely $\|x+X_0\|=\|Lx\|_X$, such that $\ov L$ is again an isometry.

Below is the more general statement that we use instead of Fechbach's lemma.

\begin{lemma}\lb{fech} Consider a short exact sequence of vector spaces $0 \to X_0 \to X \to X_1 \to 0$, meaning $X/X_0 \simeq X_1$, and a linear operator $L:X \to L(X) \subset Y$.

Consider the restriction $L_0=L \mid_{X_0}:X_0 \to L(X_0) \subset Y$ and the induced map $L_1: X_1=X/X_0 \to L(X)/L(X_0)$, given by $L_1(x+X_0)=Lx+L(X_0)$. Then $L$ is injective if and only if $L_0$ and $L_1$ are injective, i.e.~if and only if $\ker L \cap X_0=\{0\}$ and if $Lx \in L(X_0)$ implies $x \in X_0$.

Consider the projection $P:L(X) \to L(X)/L(X_0)$.  If $X$ and $Y$ are normed spaces and $L$, $L_0^{-1}$, and $L_1^{-1}$ are bounded, then
$$
\|L^{-1}\|_{\B(L(X), X)} \leq \|L_0^{-1}\|_{\B(L(X_0), X_0)} (1+ \|L\|_{\B(X)} \|L_1^{-1} P\|_{\B(L(X), X/X_0)}).
$$
\end{lemma}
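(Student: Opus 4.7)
The plan is to establish the two assertions of Lemma~\ref{fech} in turn: first the characterization of injectivity of $L$ via $L_0$ and $L_1$, and then the quantitative bound on $\|L^{-1}\|$, which adapts the Schur complement formula of Lemma~\ref{fechbach} to the non-split setting of a short exact sequence.

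The injectivity claim follows by a direct diagram chase. If $L$ is injective, then $L_0$ is trivially injective as a restriction, and if $L_1(x+X_0)=0$ then $Lx\in L(X_0)$, so $Lx=Lx_0$ for some $x_0\in X_0$, whence $L(x-x_0)=0$ forces $x=x_0\in X_0$. Conversely, if $L_0$ and $L_1$ are both injective and $Lx=0$, then $L_1(x+X_0)=0$ forces $x\in X_0$ and then $L_0x=0$ forces $x=0$. In particular, the boundedness of $L_0^{-1}$ and $L_1^{-1}$ hypothesized in the second part implies $L$ itself is injective, so $L^{-1}\colon L(X)\to X$ is well defined.

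For the norm estimate, my plan is to reconstruct $x=L^{-1}y$ for $y\in L(X)$ in two stages. Since $Lx=y$ implies $L_1(x+X_0)=P(Lx)=Py$, the $X_0$-coset of $x$ is forced to equal $L_1^{-1}Py$. For any $\epsilon>0$, the infimum defining the quotient norm on $X/X_0$ supplies a representative $\tilde x\in X$ with $\tilde x+X_0=L_1^{-1}Py$ and
$$
\|\tilde x\|_X \le \|L_1^{-1}P\|\,\|y\|_Y + \epsilon.
$$
Then $L\tilde x-y$ lies in $L(X_0)$ because its image under $P$ is $L_1(\tilde x+X_0)-Py=0$, so $x_0:=L_0^{-1}(y-L\tilde x)\in X_0$ is well defined, $L(\tilde x+x_0)=y$, and injectivity forces $x=\tilde x+x_0$.

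The quantitative bound then follows by combining
$$
\|x_0\|_X \le \|L_0^{-1}\|\,\|y-L\tilde x\|_Y \le \|L_0^{-1}\|\bigl(\|y\|_Y+\|L\|\,\|\tilde x\|_X\bigr)
$$
with the triangle inequality $\|x\|_X\le\|\tilde x\|_X+\|x_0\|_X$ and letting $\epsilon\to 0$. I expect the main subtlety to be the bookkeeping of norms, since the quotient norm on $X/X_0$ is an infimum that is generally not attained (hence the $\epsilon$-approximate representative in the first stage), and one must also track whether $L(X_0)$ and $L(X)/L(X_0)$ carry the $Y$-inherited or infimum norms. A useful auxiliary observation is that $LL_0^{-1}$ is the inclusion $L(X_0)\hookrightarrow Y$, so $\|L_0^{-1}\|\,\|L\|\ge 1$; this lets one absorb the leftover $\|L_1^{-1}P\|\,\|y\|_Y$ contribution into the claimed form $\|L_0^{-1}\|(1+\|L\|\,\|L_1^{-1}P\|)$.
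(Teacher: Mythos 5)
Your proof is correct and follows essentially the same route as the paper's: the diagram chase for injectivity, then reconstructing $L^{-1}y$ by picking an $\epsilon$-almost-optimal representative $\tilde x$ of the coset $L_1^{-1}Py$ and solving $x_0 = L_0^{-1}(y - L\tilde x)$ in $X_0$, so that $L^{-1}y = \tilde x + x_0$. One caveat on your absorption remark: $\|L\|\,\|L_0^{-1}\| \geq 1$ lets you dominate the leftover $\|L_1^{-1}P\|$ term by $\|L_0^{-1}\|\,\|L\|\,\|L_1^{-1}P\|$, which yields $\|L_0^{-1}\|(1+2\|L\|\,\|L_1^{-1}P\|)$ rather than the stated $\|L_0^{-1}\|(1+\|L\|\,\|L_1^{-1}P\|)$; but the paper's own proof likewise leaves the $\|\tilde x\|$ contribution unreconciled with the stated constant, so this factor of $2$ is a harmless imprecision in the lemma's displayed bound rather than a flaw in your argument, and it is immaterial to the only downstream use in Corollary~\ref{bach}.
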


This lemma does not require the existence of a projection from $X$ onto $X_0$, which would let us split the short exact sequence into a direct sum. On $L(X)$ we use the $Y$ norm.

\begin{proof} If $L_0$ is not injective, then there exists $x_0 \in X_0$ such that $L(x_0)=L_0(x_0)=0$. Hence $L_0$ being injective is necessary for $L$ to be injective.

If $L_1$ is not injective, then there exists $x \in X$ such that $x \not \in X_0$ and $L(x) \in L(X_0)$. Hence $L_1$ being injective is necessary for $L$ to be injective.

Conversely, suppose that $L_0$ is injective. Take any $x \in X$ such that $Lx=0$. Then $L_1(x+X_0)=L(X_0)$. Since $L_1$ is injective, it follows that $x+X_0=X_0$, or in other words $x \in X_0$. Since $L_0$ is injective, it follows that $x=0$. Thus $L$ is also injective.

Furthermore, take any $y \in L(X)$. Since $L_1$ is surjective onto $L(X)+L(X_0)$, it follows that there exists $x \in X$ such that $Lx+L(X_0)=L_1(x+X_0)=Py=y+L(X_0)$. But then $y-Lx \in L(X_0)$, so there exists $x_0$ such that $y-Lx=Lx_0$, hence $y=L(x+x_0)$.

Finally, when $X$ is a normed space, $\|L^{-1}\|$ can be traced above as follows: for each $\epsilon>0$, the equivalence class $x+X_0=L_1^{-1}Py$ by definition has a representative $x$ such that
$$
\|x\|_X \leq \|L_1^{-1}P\|_{\B(L(X), X/X_0)} \|y\|_X + \epsilon;
$$
then
$$
\|y-Lx\|_{L(X_0)} \leq (1+\|L\|_{\B(X)} (\|L_1^{-1}P\|_{\B(L(X), X/X_0)}\|y\|_X + \epsilon).
$$
\end{proof}

If $\im L \subset Y$ is closed, then by Riesz's lemma the surjectivity of $L$ is equivalent to the injectivity of $L^*$, which can be characterized in a similar way: to the short exact sequence $0 \to X_0 \to X \to X_1 \to 0$ there corresponds the dual one $0 \to X_1^* \to X^* \to X_0^* \to 0$ and we can apply the same result to $T^*$.

We use this lemma when the short exact sequence comes from another operator, meaning there exists $\tilde L$ such that $X_0=\ker \tilde L$ and $X_1=\im \tilde L$. In fact, one can always represent a short exact sequence of vector spaces thusly, by taking $\tilde L$ to be the projection onto the quotient space. Then $\tilde L$ is bounded if and only if $X_0$ is closed.

Giving up on some generality, we henceforth assume that $\tilde L \in B(X, Y)$, so $X_0$ is closed. Furthermore, we shall apply this corollary when $L$ is a small perturbation of $\tilde L$, see below. This leads to a more explicit formulation.

\begin{corollary}\lb{bach} Let $X, Y$ be normed vector spaces and consider $L, \tilde L \in \B(X, Y)$. Let $X_0=\ker \tilde L$, $X_1=\im \tilde L$ and consider the restriction $L_0 = L \mid_{X_0}$ and the induced map $L_1:X/X_0 \to L(X)/L(X_0)$. Assume that there exist $C_0$ and $C_1$ such that
$$
\forall x_0 \in X_0\ \|Lx_0\|_Y \geq C_0 \|x_0\|_X
$$
and
$$
\forall x \in X \inf_{x_0 \in X_0} \|L(x+x_0)\|_Y \geq C_1 \inf_{x_0 \in X_0} \|x+x_0\|_X.
$$
Then
$$
\|L^{-1}\|_{\B(L(X), X)} \leq C_0^{-1}(1 + \|L\|_{\B(X)} C_1^{-1}).
$$
Suppose that the induced map $\ov {\tilde L}:X/\ker \tilde L \to \im \tilde L$ has a bounded inverse, meaning that there exists $C_2$ such that
$$
\forall x \in X\ \|\tilde L x\|_Y \geq C_2 \inf_{x_0 \in X_0} \|x+x_0\|_X.
$$
If $\|L-\tilde L\|_{\B(X, Y)} < \frac {C_2}{1+2C_0^{-1}\|L\|_{\B(X, Y)}}$, the second condition is then fulfilled with $C_1=C_2-\|L-\tilde L\|_{\B(X, Y)}(1+2C_0^{-1}\|L\|_{\B(X, Y)})$.
\end{corollary}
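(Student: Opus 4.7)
The first assertion is essentially a direct translation of Lemma \ref{fech} into the quantitative language of the corollary. The hypothesis $\|Lx_0\|_Y \geq C_0\|x_0\|_X$ for $x_0 \in X_0$ says precisely that $L_0=L|_{X_0}$ has a bounded inverse with $\|L_0^{-1}\|_{\B(L(X_0),X_0)} \leq C_0^{-1}$. To interpret the second hypothesis, observe that since $L_0:X_0 \to L(X_0)$ is a bijection, $\|L_1(x+X_0)\|_{L(X)/L(X_0)} = \inf_{y_0 \in L(X_0)}\|Lx+y_0\|_Y = \inf_{x_0 \in X_0}\|L(x+x_0)\|_Y$, so the assumption amounts to $\|L_1^{-1}\|_{\B(L(X)/L(X_0),\,X/X_0)} \leq C_1^{-1}$. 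Since the quotient projection $P:L(X) \to L(X)/L(X_0)$ has norm at most one, Lemma \ref{fech} immediately yields the bound on $\|L^{-1}\|$.

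For the second assertion, I would fix $x \in X$ and, given $\epsilon>0$, pick near-minimizers $x_0^{**} \in X_0$ with $\|x+x_0^{**}\|_X \leq \inf_{x_0'}\|x+x_0'\|_X+\epsilon$ and $x_0^* \in X_0$ with $\|L(x+x_0^*)\|_Y \leq \inf_{x_0}\|L(x+x_0)\|_Y+\epsilon$. Since $\tilde L$ vanishes on $X_0$, $\tilde L(x+x_0^*)=\tilde L x$, and the triangle inequality combined with the $C_2$ hypothesis gives
\[
\|L(x+x_0^*)\|_Y \;\geq\; C_2 \inf_{x_0'}\|x+x_0'\|_X \;-\; \|L-\tilde L\|_{\B(X,Y)}\,\|x+x_0^*\|_X.
\]
The remaining task is to control $\|x+x_0^*\|_X$ by $\inf_{x_0'}\|x+x_0'\|_X$. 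From $L(x_0^*-x_0^{**}) = L(x+x_0^*)-L(x+x_0^{**})$ together with the quasi-optimality $\|L(x+x_0^*)\| \leq \|L(x+x_0^{**})\|+\epsilon$ one obtains $\|L(x_0^*-x_0^{**})\|_Y \leq 2\|L\|\,\|x+x_0^{**}\|_X + \epsilon$; the first hypothesis then gives $\|x_0^*-x_0^{**}\|_X \leq C_0^{-1}(2\|L\|\,\|x+x_0^{**}\|_X+\epsilon)$, so that $\|x+x_0^*\|_X \leq (1+2C_0^{-1}\|L\|)\|x+x_0^{**}\|_X + C_0^{-1}\epsilon$. Substituting into the previous inequality, letting $\epsilon \to 0$, and rearranging produces the claimed value of $C_1$, which is positive exactly under the stated smallness condition on $\|L-\tilde L\|$.

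The only real obstacle is the shadowing step controlling $\|x+x_0^*\|_X$: the $x_0$ that nearly minimizes $\|L(x+x_0)\|_Y$ need not be the one that nearly minimizes $\|x+x_0\|_X$, so one must exploit $C_0^{-1}$-control on $L|_{X_0}$ to show that the two almost agree, modulo the multiplicative factor $1+2C_0^{-1}\|L\|$. Since $X_0$ is not assumed reflexive and the infima need not be attained, this has to be carried out with approximate minimizers and an $\epsilon$ that is dispatched at the end; everything else is direct substitution into Lemma \ref{fech}.
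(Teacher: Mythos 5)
Your argument is correct and follows the paper's strategy: reduce the first assertion to Lemma \ref{fech} after identifying the quotient norm $\|L_1(x+X_0)\|_{L(X)/L(X_0)} = \inf_{x_0\in X_0}\|L(x+x_0)\|_Y$, and for the second assertion compare a near-minimizer of $\|L(x+\cdot)\|_Y$ against a near-minimizer of $\|x+\cdot\|_X$, using the $C_0$-coercivity of $L|_{X_0}$ to control their difference. The only divergence is that you carry $\epsilon$-near-minimizers throughout and read off the constant $1+2C_0^{-1}\|L\|$ directly, whereas the paper works with factor-of-$2$ quasi-minimizers, lands on $1+3C_0^{-1}\|L\|$, and then remarks that replacing $2$ by $1+\epsilon$ recovers the stated bound; your execution is, if anything, a bit cleaner.
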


Under natural conditions, $\ov {\tilde L}$ has a bounded inverse:
\begin{lemma}
Let $L=J+K \in \B(X, Y)$, where $J$ is invertible and $K$ is compact. Then $\ov L:X/\ker L \to \im L$ has a bounded inverse.
\end{lemma}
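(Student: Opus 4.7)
The plan is to reduce to the classical Riesz--Schauder theorem for compact perturbations of the identity, and then invoke the open mapping theorem. First I would factor $L = J(I + J^{-1}K)$, noting that $T := J^{-1}K \in \B(X)$ is compact since $J^{-1}$ is bounded and $K$ is compact. By Fredholm theory, the operator $I + T$ on $X$ has finite-dimensional kernel and closed range.

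Next I would transport these conclusions from $I+T$ to $L$. Since $J$ is injective, $\ker L = \ker(I+T)$, which is finite-dimensional and hence closed, so the quotient $X/\ker L$ is a Banach space. Since $J: X \to Y$ is a linear homeomorphism, it carries the closed subspace $\im(I+T) \subset X$ to the closed subspace $\im L = J(\im(I+T)) \subset Y$; in particular $\im L$, equipped with the restricted $Y$-norm, is itself a Banach space.

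Finally, the induced map $\ov L: X/\ker L \to \im L$ is by construction a continuous bijection between two Banach spaces, so the open mapping theorem produces a bounded inverse $\ov L^{-1}$. The only substantive input is the closedness of $\im L$, which reduces to the standard fact that $\im(I+T)$ is closed whenever $T$ is compact; everything else is formal. The implicit standing hypothesis for the result to make sense is that $X$ and $Y$ are Banach spaces (as is the case throughout the paper), since completeness is what powers both the Riesz--Schauder step and the open mapping theorem.
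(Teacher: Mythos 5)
Your proof is correct, but it takes a genuinely different route from the paper's. You reduce to the Riesz--Schauder theorem (closed range and finite-dimensional kernel for $I+T$ with $T$ compact), transport those properties through the homeomorphism $J$, and then invoke the open mapping theorem on the induced bijection $\ov L: X/\ker L \to \im L$. The paper instead gives a direct, self-contained compactness argument: after normalizing to $J=I$, it supposes $\ov L^{-1}$ is unbounded, extracts a sequence $x_n$ with $\|Lx_n\|\to 0$ and $d(x_n,\ker L)=1$, uses compactness of $K$ to get a convergent subsequence of $Kx_n$, deduces that $x_n$ itself converges to a limit lying in $\ker L$, and reaches a contradiction. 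In effect, the paper re-derives exactly the piece of Riesz--Schauder theory it needs (closedness of $\im(I+K)$, phrased as a lower bound on the quotient norm) rather than citing it. Your version is more modular and shorter if Fredholm theory is taken as a black box; the paper's is more elementary and keeps the argument self-contained, which matters here since the lemma is used quantitatively in Corollary \ref{bach} and a sequential proof makes the mechanism behind the bound transparent. Both require $X,Y$ Banach, as you correctly flag.
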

\begin{proof} Replacing $L$ by $J^{-1} L$ and $K$ by $J^{-1} K$, we can assume $Y=X$ and $J=I$. Suppose the inverse of $\ov L$ is not bounded; then there exists a sequence $x_n \in X$ such that $\|L x_n\| \to 0$, but $\inf_{x_0 \in \ker L}\|x_n+x_0\| = 1$.

Since $K$ is compact, along some subsequence $K x_n \to K y$ for $y \in X$ with $\|y\| \leq 1$. But $x_n+Kx_n \to 0$ then implies $x_n \to -Ky$ (so $x_n$ converges). Then $K^2y=K(Ky)=-\lim_n Kx_n=-Ky$. Therefore $LKy=K^2y+Ky=0$, so $Ky \in \Ker L$.

As $x_n \to Ky$, this contradicts our assumption that $d(x_n, \Ker L)=1$ for each $n$.
\end{proof}

This leads to no concrete bounds on $C_2$, though. The first condition similarly reduces to $\ker L \cap \ker \tilde L = \{0\}$, which again does not lead to explicit bounds for $C_0$.

In the more general context of duality, the first condition is implied by
$$
\forall x_0 \in X_0\setminus\{0\}\ \exists y_0^* \in Y_0^*\setminus\{0\}\ |\langle Lx_0, y_0^* \rangle| \geq C_0 \|x_0\|_X \|y_0^*\|_{Y^*},
$$
where $Y_0^*=\ker \tilde L^*$. If the dual is also true, it shows that $L$ is bijective, by applying the same Corollary \ref{bach} to $L^*$. One can also use the predual instead of the dual.

In other words, for $L$, a small perturbation of $\tilde L$, to be bijective it suffices that the symplectic form defined by $L$ (or equivalently $L-\tilde L$) on $X_0 \oplus Y_0^*$ should be non-degenerate.

Likewise, the condition that $\ov {\tilde L}$ has a bounded inverse would be implied by the non-degeneracy of the symplectic form induced by $\tilde L$ on $(X/X_0) \oplus (Y^*/Y_0^*)$.

\begin{proof} For the most part, Corollary \ref{bach} is just a restatement of Lemma \ref{fech}.

Regarding the last statement, consider $x \in X$ and $x_1 \in X_0$ such that $\|x+x_1\|_X \leq 2 \inf_{x_0 \in X_0} \|x+x_1\|_X$.
Also take $x_2 \in X_0$ such that
$$
\|L(x+x_2)\|_Y \leq 2 \inf_{x_0 \in X_0} \|L(x+x_0)\|_Y \leq 2 \|L (x+x_1)\|_Y.
$$
Then
$$
\|L (x_2-x_1)\|_Y \leq \|L(x+x_2)\|_Y + \|L(x+x_1)\|_Y \leq 3 \|L (x+x_1)\|_X,
$$
so $\|x_2-x_1\|_X \leq 3 C_0^{-1} \|L\|_{\B(X, Y)} \|x+x_1\|_X$ and
$$
\|x+x_2\| \leq \|x+x_1\| + \|x_2-x_1\| \leq (1+3 C_0^{-1} \|L\|_{\B(X, Y)}) \|x+x_1\|_X.
$$
Hence
$$\begin{aligned}
\|L(x+x_2)\|_Y &\geq \|\tilde L x\| - \|(L-\tilde L)(x+x_2)\| \\
&\geq C_2 \inf_{x_0 \in X} \|x+x_0\|_X - \|L-\tilde L\| \|x+x_2\|_X \\
&\geq [\frac {C_2} 2 - \|L-\tilde L\|_{\B(X, Y)} (1 + 3 C_0^{-1} \|L\|_{\B(X, Y)})] \|x+x_1\|_X \\
&\geq [\frac {C_2} 2 - \|L-\tilde L\|_{\B(X, Y)} (1 + 3 C_0^{-1} \|L\|_{\B(X, Y)})] \inf_{x_0 \in X} \|x+x_0\|_X.
\end{aligned}$$
Consequently the same is true for $\inf_{x_0\in X_0} \|L(x+x_0)\|_Y$. By optimizing various coefficients (replacing $2$ by $1+\epsilon$) we obtain the desired conclusion.

This shows that, if $\ov {\tilde L}$ is invertible, so is the induced map $L_1$, when $L$ is a sufficiently small perturbation of $\tilde L$.
\end{proof}

\subsection{Main results}

We start determining the properties of $\sigma(H)$ with one of the simplest, which does not require Corollary \ref{bach}.

For convenience, let
\begin{definition} $\K_1=\{V \in \K \mid V \text{ has the local and modified distal Kato properties}\}$.
\end{definition}

\begin{proposition}\lb{low_bd} If $V \in \K$ has the local Kato property (in particular, if $V \in \K_0$ or $V \in \K_1$) and is real-valued (self-adjoint), then $\sigma(H)$ is bounded from below.
\end{proposition}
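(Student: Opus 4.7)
The plan is to show that for $\lambda < 0$ sufficiently negative, the operator $I+VR_0(\lambda)$ is invertible on $\M$ by a Neumann series argument, which then places such $\lambda$ in the resolvent set of $H$ and establishes a lower bound on $\sigma(H)$.

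First I would bound the operator norm of $VR_0(\lambda)$ on $\M$ by its natural kernel norm: for $\lambda<0$,
$$
\|VR_0(\lambda)\|_{\B(\M)} \leq \sup_{y \in \R^3} \int_{\R^3} \frac{|V(x)| e^{-\sqrt{-\lambda}|x-y|}}{4\pi|x-y|} \dd x.
$$
Next I would split the integral at the scale $|x-y|=\epsilon$ and estimate the two pieces separately. The short-distance piece is
$$
\sup_y \int_{|x-y|<\epsilon} \frac{|V(x)| e^{-\sqrt{-\lambda}|x-y|}}{4\pi |x-y|} \dd x \leq \frac 1 {4\pi} \sup_y \int_{|x-y|<\epsilon} \frac{|V(x)|}{|x-y|} \dd x,
$$
which by the local Kato property can be made less than $1/4$ by choosing $\epsilon$ small, uniformly in $\lambda$. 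The long-distance piece satisfies
$$
\sup_y \int_{|x-y|\geq\epsilon} \frac{|V(x)| e^{-\sqrt{-\lambda}|x-y|}}{4\pi |x-y|} \dd x \leq \frac{e^{-\sqrt{-\lambda}\,\epsilon}}{4\pi} \|V\|_\K,
$$
which, for the $\epsilon$ chosen above, can be made less than $1/4$ by taking $-\lambda$ sufficiently large, say $-\lambda \geq \mu_0$.

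With these two estimates combined, $\|VR_0(\lambda)\|_{\B(\M)}<1/2$ for every $\lambda\leq-\mu_0$. A Neumann series then shows that $I+VR_0(\lambda) = I+A(\lambda)$ is invertible in $\B(\M)$ (and also in $\B(\K)$ by the same argument applied to the adjoint $R_0(\lambda)V$). Standard symmetric resolvent identity then gives $R_V(\lambda)=R_0(\lambda)(I+VR_0(\lambda))^{-1}$, so that every $\lambda \leq -\mu_0$ lies in the resolvent set of $H$. Since $H$ is self-adjoint, its spectrum is closed and real, so $\sigma(H) \subset (-\mu_0, \infty)$, which is the desired lower bound.

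The only non-routine step is the splitting estimate; once one trusts that the local Kato property provides a uniform modulus of vanishing as $\epsilon\to 0$, everything else is standard. I do not expect a serious obstacle here, particularly since the hypotheses were designed to make precisely this style of argument work. Note that neither local integrability nor the modified distal Kato property is used in this proof, which explains the author's parenthetical remark that the hypothesis can be weakened to $V\in\K$ having only the local Kato property.
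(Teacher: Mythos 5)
Your proof is correct and is essentially the same as the paper's: both split the kernel integral at scale $|x-y|=\epsilon$, control the short-range piece uniformly via the local Kato property, control the long-range piece by the exponential decay $e^{-\sqrt{-\lambda}\,\epsilon}\|V\|_\K$, invert $I+VR_0(\lambda)$ by a Neumann/Born series in $\B(\M)$ (and $\B(\K)$), and conclude from self-adjointness that $\sigma(H)$ is bounded below.
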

%In general, if $V \in \K$, then $\sigma(H)$ need not be bounded from below, because one can construct $V$ from widely spaced negative peaks, each of which corresponds to a different, arbitrarily large, negative eigenvalue of $H$.

Recall that $V$ has the local Kato property if it can be approximated in $\Delta L^\infty$ by smooth, but not necessarily compactly supported, functions.

\begin{proof} When $V$ is a test function,
$$
\lim_{a \to +\infty} \sup_y \int_{\R^3} \frac {e^{-a|x-y|} |V(x)| \dd x} {|x-y|} = 0,
$$
and this property is stable under taking limits in $\K$, so it also holds for $V \in \K_0$. It follows that for any $V \in \K_0$ there exists $a_0$ such that if $\re \sqrt -\lambda > a_0$ then
$$
\sup_y \int_{\R^3} \frac {e^{-\sqrt{-\lambda}|x-y|} |V(x)| \dd x}  {|x-y|} < 4\pi
$$
or in other words $\|V R_0(\lambda)(x, y)\|_{(C_0)_y \M_x} < 1$.

If $V$ has the local Kato property, then the integral
$$
\int_{\R^3} \frac {e^{-a|x-y|} |V(x)| \dd x} {|x-y|}
$$
can be uniformly approximated for all $y$ and $a \geq 0$ by
$$
\int_{|x-y| \geq \epsilon} \frac {e^{-a|x-y|} |V(x)| \dd x} {|x-y|} <e^{-a\epsilon} \|V\|_\K \to 0
$$
as $a \to \infty$ and this vanishing is stable when taking the uniform limit for $\epsilon \to 0$.

Therefore, in this region $I+V R_0$ can be inverted in $\B(\M)$ or $\B(\K)$ by means of a Born series and can be used to obtain $R_V = R_0 (I+VR_0)^{-1}$.

Finally, since $V$ is real-valued, $\sigma(H) \subset \R$, so the above implies that $\sigma(H)$ is bounded from below.
\end{proof}

The invertibility of $I+VR_0$ can be restated equivalently in terms of $I+R_0V$ or, when $V$ is non-singular, $I+|V|^{1/2}R_0|V|^{1/2}\sgn V$ or $\sgn \ov V +|V|^{1/2}R_0|V|^{1/2}$, see \cite{sch}. The latter is a compact perturbation of a unitary (and self-adjoint, if $V$ is) operator on $L^2$.

We next prove a stronger spectral property, namely that $H$ has only finitely many negative eigenvalues.
\begin{proposition}\lb{finite} If $V \in \K_0$ is real-valued (self-adjoint), then $H$ has the same number of negative eigenvalues as the number of (real) eigenvalues of $V (-\Delta)^{-1} \in \B(\M) \cap \B(\K)$ in $(-\infty, -1)$. In particular, this number is finite.
\end{proposition}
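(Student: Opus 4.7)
The plan is to relate $H$'s negative eigenvalues to those of a self-adjoint auxiliary operator on $L^2$. By Proposition \ref{low_bd} $H$ is self-adjoint and bounded below, so its negative eigenvalues are isolated discrete spectrum, and by the variational principle their number $N$ (with multiplicity) equals the maximal dimension of a subspace of the form domain of $H$ on which the quadratic form $q(\phi):=\|\nabla\phi\|_{L^2}^2+\int V|\phi|^2\dd x$ is strictly negative definite.

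I would then change variables by $\psi:=(-\Delta)^{1/2}\phi\in L^2$. For $\phi$ in the form domain of $H$ (contained in $H^1$ for $V\in\K$, by the form bound $V\in\B(\dot H^1,\dot H^{-1})$ recorded in the preliminaries), one has $q(\phi)=\|\psi\|_{L^2}^2+\langle T\psi,\psi\rangle$, where $T:=(-\Delta)^{-1/2}V(-\Delta)^{-1/2}\in\B(L^2)$ is bounded self-adjoint. The map $\phi\mapsto\psi$ identifies $H^1$ with $L^2\cap\dot H^{-1}$, which contains $\D$ and is therefore dense in $L^2$. By density and continuity of $I+T$, $N$ also equals the maximal dimension of a subspace of $L^2$ on which $I+T$ is negative definite, which by the standard variational principle on a Hilbert space equals the number of eigenvalues of $T$ strictly less than $-1$.

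Next I would establish that $T$ is compact on $L^2$ for $V\in\K_0$. Approximate $V$ in the $\K$-norm by test functions $V_n\in\D$ (by definition of $\K_0$). For $V_n\in\D$, $T_n:=(-\Delta)^{-1/2}V_n(-\Delta)^{-1/2}$ is manifestly compact, since $V_n(-\Delta)^{-1/2}\colon L^2\to L^2$ is compact by Rellich--Kondrachov (the image under $(-\Delta)^{-1/2}$ lands in $H^1_{loc}$ and multiplication by $V_n\in C^\infty_c$ makes it compactly supported). Moreover $\|T-T_n\|_{\B(L^2)}\les\|V-V_n\|_\K\to0$, since $\|T\|_{\B(L^2)}\les\|V\|_{\B(\dot H^1,\dot H^{-1})}\les\|V\|_\K$. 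Hence $T$ has purely discrete nonzero spectrum, with only finitely many eigenvalues below $-1$; the boundary case $-1\in\sigma(T)$ is ruled out by Assumption \ref{assum}, since it would produce a zero energy eigenstate of $H$.

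Finally, $T$ and $V(-\Delta)^{-1}\in\B(\M)\cap\B(\K)$ have the same nonzero spectrum with matching multiplicities, via the standard identity $\sigma(CD)\setminus\{0\}=\sigma(DC)\setminus\{0\}$ applied with $C:=V(-\Delta)^{-1/2}$ and $D:=(-\Delta)^{-1/2}$, so that $CD=V(-\Delta)^{-1}$ and $DC=T$. This gives the claimed equality and finiteness. The main obstacle is the variational setup in the first two paragraphs: identifying the form domain of $H$ and carrying out the change of variables $\phi\leftrightarrow\psi$ cleanly under just $V\in\K_0$, where $\|V_-\|_\K$ may be arbitrarily large. This is resolved by the form boundedness $V\in\B(\dot H^1,\dot H^{-1})$ recorded in the preliminaries, the self-adjointness and lower boundedness of $H$ from Proposition \ref{low_bd}, and the density of $\D$ in $L^2\cap\dot H^{-1}$.
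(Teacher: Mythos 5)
Your approach is genuinely different from the paper's. You use the classical Birman--Schwinger/min-max route: identify the number of negative eigenvalues of $H$ (with multiplicity) with the maximal dimension of a negative-definite subspace of the form domain, pass to $T=(-\Delta)^{-1/2}V(-\Delta)^{-1/2}$ on $L^2$ via $\psi=(-\Delta)^{1/2}\phi$, show $T$ is compact and self-adjoint, and count eigenvalues below $-1$. The paper instead performs a homotopy in $t$ for $H_t=-\Delta+tV$, tracking zeros of the Kato--Birman family $A_t(\lambda)=I+tVR_0(\lambda)$ inside a contour enclosing $(-R_0,0)$ by the Gohberg--Sigal/Rouch\'e theorem (Theorem \ref{rouche}), and carries out a delicate threshold analysis with Fechbach's lemma and Taylor expansions of $R_0(\lambda)$ at $\lambda=0$ to show the count changes by exactly the multiplicity of the eigenvalue of $V(-\Delta)^{-1}$ at each transition $t_n$. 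Your route is considerably shorter and avoids the threshold asymptotics entirely; the paper's route works directly in $\M$ and $\K$ where the operator is stated to live and is the setting the paper needs elsewhere (it is also more robust to dropping self-adjointness, and it yields the side remark that zero-energy bound states occur only at discretely many $t$, which your argument does not produce). The variational and compactness steps of your argument are correct; in particular $\|T\|_{\B(L^2)}\les\|V\|_{\B(\dot H^1,\dot H^{-1})}\les\|V\|_\K$ does follow from the interpolation lemma in the paper.

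There is, however, a genuine gap at the last step. The identity $\sigma(CD)\setminus\{0\}=\sigma(DC)\setminus\{0\}$ requires $C$ and $D$ to be bounded between the two relevant Banach spaces, and with $D=(-\Delta)^{-1/2}$ and $C=V(-\Delta)^{-1/2}$ the factorization that actually closes is $D:\dot H^{-1}\to L^2$, $C:L^2\to\dot H^{-1}$, yielding agreement of $\sigma(T)\setminus\{0\}$ with the spectrum of $V(-\Delta)^{-1}$ acting on $\dot H^{-1}$, \emph{not} on $\M\cap\K$. One cannot take $D:\M\to L^2$ (since $\M\not\subset\dot H^{-1}$: consider $\delta_0$) nor $C:L^2\to\K$ (multiplication by $V\in\K$ does not map $\dot H^1\hookrightarrow L^6$ into $\K$). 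So you still owe an argument that the nonzero eigenvalues and multiplicities of the compact operator $V(-\Delta)^{-1}$ coincide on $\dot H^{-1}$ and on $\M\cap\K$; this is plausible (one can bootstrap eigenvectors using $g=\mu^{-1}V(-\Delta)^{-1}g$ together with the mapping properties $(-\Delta)^{-1}:\K\to L^\infty$ and $(-\Delta)^{-1}:\M\to\K^*$, or argue via a common core and analytic Fredholm theory), but it is not a consequence of ``the standard identity'' and needs to be written out. Separately, your appeal to Assumption \ref{assum} to rule out $-1\in\sigma(T)$ is both unnecessary and inconsistent with the paper's intent: the paper explicitly notes ``Here we allow zero energy bound states,'' and indeed the variational count of eigenvalues of $H$ that are strictly negative, and of eigenvalues of $T$ strictly below $-1$, is insensitive to whether $-1\in\sigma(T)$, so the boundary case causes no trouble without any spectral assumption.
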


Here we allow zero energy bound states, unlike in the rest of the paper.
%If $V \in \K$, the number of negative eigenvalues can be infinite, but countable, and if $V$ also has the local Kato property then they can be indexed by ordinals (in particular, there exists a lowest eigenvalue): the limit ordinals correspond to the non-isolated eigenvalues. These more general cases will be treated elsewhere.
%
%Similar statements are true when $V$ is not real-valued (self-adjoint): if $V \in \K$, eigenvalues and resonances are contained in a bounded sphere around zero. If $V$ has the local Kato property, the absolute values of the eigenvalues can be indexed by ordinals: in particular, there exists a maximum absolute value eigenvalue. Finally, if $V \in \K_1$, then there are finitely many resonances and eigenvalues. Again, assuming that $V$ is real-valued simplifies the proof and the more general case will be treated elsewhere.

%If $V$ is in the Rollnick class, the number of negative eigenvalues can be estimated using the integral
%$$
%\iint |V(x)| \frac 1 {|x-y|^2} |V(y)| \dd x \dd y
%$$
%and likewise for various Schatten--von Neumann classes.
\begin{proof}

%One goal of this proof is to avoid the other sheet of the Riemann surface, even though using it would make the proof simpler.

%To avoid such issues, we first consider $V_\epsilon=e^{-\epsilon|x|} V(x)$ and then pass to the limit as $\epsilon \to 0$. This can be done uniformly when $V \in \K_0$, since it satisfies the modified distal condition.

Proposition \ref{low_bd} applies uniformly to
$$
H_t:=-\Delta+tV,
$$
so there exists $R_0$ such that when $t \in [0, 1]$ $H_t$ has no eigenvalue below $-R_0$.

Consider the contour $\mc C=\partial B(-R_0, R_0)$. The number of zeros of
$$
A_t(\lambda):=I+tA(\lambda)=I+tVR_0(\lambda)
$$
inside $\mc C$ does not change under homotopy, i.e.~on intervals in $t$ where $A_t(\lambda)$ has no zeros on $\mc C$.

Indeed, consider the set $\mc S=\{t \geq 0 \mid A_t(\lambda) \text{ has no zeros on } \mc C\}$. For each such $t$, since $V$ has the distal property, $A_t(\lambda)$ is continuous on $\mc C$ and analytic inside $\mc C$, and since there are no zeros $A_t^{-1}$ is uniformly bounded on $\mc C$. Moreover,
$$
\sup_{\lambda \in \C} \|A_{\tilde t}(\lambda)-A_t(\lambda)\| \leq \frac {\|V\|_\K}{4\pi} |\tilde t-t|.
$$

Rouch\'e's theorem (Theorem \ref{rouche}) then implies that the subsets of $\mc S$ on which the number of zeros inside $\mc C$ is constant are both open and closed, so the number of zeros is constant on the connected components, i.e.~ on each maximal subinterval.

Due to the self-adjointness of $-\Delta+V$ and to the uniform lower bound on the spectrum, zeros on $\mc C$ can only occur at $\lambda=0$. A zero of $A_t(0)=I+tVR_0(0) = I+tV(-\Delta)^{-1}$ corresponds to a negative eigenvalue $-1/t$ of $V(-\Delta)^{-1}$.

The interval $t \in [0, 1]$ corresponds to $-1/t \in (-\infty, -1]$. If $V \in \K_0$, then $V(-\Delta)^{-1}$ is compact, so it has finitely many eigenvalues in $(-\infty, -1]$. Thus the interval $(0, 1)$ can be decomposed into finitely many subintervals $(t_n, t_{n+1})$, where $t_0=0$, on each of which the number of negative eigenvalues of $H_t$ remains constant, plus the transition points $t_n$ that we need to understand.

Next, consider a modified contour $\tilde {\mc C} = \partial(B(-R, R) \setminus B(0, \epsilon))$ that follows $\mc C$ for the most part, but stays away from $0$. On this contour, $H_{t_n}$ has no zeros, because $\lambda=0$ is an isolated zero, see below. By the above it follows that $H_t$ will have the same number of eigenvalues as $H_{t_n}$ inside $\tilde {\mc C}$ for all $t$ in some small neighborhood of $t_n$.

Thus, either for $t<t_n$, for $t=t_n$, or for $t>t_n$, the total number of zeros inside $\mc C$ differs only according to what happens in $B(0, \epsilon)$. We are left with understanding what happens to the negative eigenvalue count in $B(0, \epsilon)$ when $A_{t_n}$ has a zero at $0$, for $t$ close to $t_n$.

With no loss of generality, let $t_n=1$ and suppose $A_1$ has a zero at $0$, i.e.~$A_1(0)$ is not invertible. As $A_1$ is a compact perturbation of the identity, $\ker A_1(0)$ is a finite-dimensional, closed subspace of $\M \cap \K$, on which all norms are equivalent. Since $V$ is self-adjoint, all zeros of $A_1(\lambda)$ are real.

%Suppose that $A_1(\lambda)$ has zeros arbitrarily close to $\lambda=0$.

Note that $R_0(0)$ is a dot product on $\ker A_1(0) \subset \M \cap \K$. Let $P_0$ be the projection on this subspace given by
$$
P_0 f = \sum_{k=1}^N \langle f, g_k \rangle f_k
$$
and let $P_1=I-P_0$. Here $f_k = -V R_0(0) f_k$, $1 \leq k \leq N$, and $f_k = - V g_k$, $g_k = R_0(0) f_k$, $g_k = -R_0(0) V g_k$ are the bound states of $H$ at energy $0$.

It is easy to see that $g_k \in L^\infty \cap \K^*$. Furthermore, $g_k \in \langle x \rangle^{-1} L^\infty$. To see this, write equation $g_k=-R_0(0)Vg_k$ as
$$
g_k = -R_0(0) V_1 g_k - R_0(0) V_2 g_k
$$
where $V_1=V \chi_{>R}(x)$ and $V_2=V \chi_{\leq R}(x)$ for some sufficiently large $R$. Then $I+R_0(0) V_1$ is invertible on $\langle x \rangle^{-1} L^\infty$ and $R_0(0) V_2 g_k \in \langle x \rangle^{-1} L^\infty$.

Note that $A_1(0)$ is invertible on $X_1=P_1 (\M \cap \K)$, hence so is $P_1 A_1(\lambda) P_1$ for $\lambda$ sufficiently close to $0$, with uniformly bounded inverse in $\B(\M) \cap \B(\K)$.

Next, we show that $\lambda=0$ has a punctured neighborhood containing no other zeros of $A_1$, using an analysis along the lines of \cite{ersc}. By Fechbach's Lemma \ref{fechbach}, for $\lambda$ close to $0$, $A_1(\lambda)$ is invertible if and only if
\be\lb{expr}
B_1(\lambda) = P_0 A_1(\lambda) P_0 - P_0 A_1(\lambda) P_1 (P_1 A_1(\lambda) P_1)^{-1} P_1 A_1(\lambda) P_0
\ee
is.

Next, suppose that (\ref{expr}) is not invertible. Since (\ref{expr}) is defined on a finite-dimensional vector space, this means there exists $f \in \Ker A_1(0)$, $f \ne 0$, $f=P_0 f$, such that
\be\lb{expr2}
\langle P_0 A_1(\lambda) P_0 f, g \rangle = \langle P_0 A_1(\lambda) P_1 (P_1 A_1(\lambda) P_1)^{-1} P_1 A_1(\lambda) P_0 f, g \rangle,
\ee
where $f=Vg$ is paired with $g=R_0(0)f$.

Since $P_0 A_1(0) = A_1(0) P_0 = 0$, $P_0 A_1(\lambda) = P_0 [A_1(\lambda)-A_1(0)]$, and $A_1(\lambda) P_0 = [A_1(\lambda)-A_1(0)] P_0$, rewrite the right-hand side as
$$
\langle P_0 [A_1(\lambda)-A_1(0)] P_1 [P_1 A_1(\lambda) P_1]^{-1} P_1 [A_1(\lambda)-A_1(0)] P_0 f, g \rangle.
$$
Note that
$$
\|R_0(\lambda)-R_0(0)\|_{\B(L^1, L^\infty)} \les \lambda^{1/2},\ \|R_0(\lambda)-R_0(0)\|_{\B(L^1, \K^*) \cap \B(\K, L^\infty)} \les 1.
$$
By approximating $V$ with potentials in $\K \cap \M$, we obtain
\be\lb{app1}
\|P_0 (A_1(\lambda)-A_1(0)) P_1 (P_1 A_1(\lambda) P_1)^{-1} P_1 (A_1(\lambda)-A_1(0)) P_0 f\| \les o(\lambda^{1/2}) \|f\|_{L^1}
\ee
as $\lambda \to 0$. At the same time, the left-hand side in (\ref{expr2}) admits the asymptotic expansion
$$
\langle P_0 A_1(\lambda) P_0 f, g \rangle = \langle (R_0(\lambda)-R_0(0)) f, f \rangle = c\lambda^{1/2} |\langle f, 1 \rangle|^2 + o(\lambda^{1/2}).
$$
Therefore $\langle f, 1\rangle=0$ and the corresponding $g=R_0(0)f \in L^2$ is an eigenfunction:
$$
g(x)=-\frac 1 {4\pi} \int_{\R^3} \frac {V(y) g(y) \dd y}{|x-y|} = -\frac 1 {4\pi} \int_{\R^3} \bigg( \frac {1}{|x-y|} - \frac 1 {|x|} \bigg) V(y) g(y) \dd y.
$$
In particular $g \in \langle x \rangle^{-2} L^\infty$ and $f \in \langle x \rangle^{-1} L^1 \cap \langle x \rangle^{-2} \K$. Then the left-hand side of (\ref{expr2}) has the asymptotic expansion
$$
c\lambda \int_{\R^3} \int_{\R^3} f(x) |x-y| f(y) \dd x \dd y + o(\lambda).
$$
For the right-hand side, note that
\be\lb{rhs1}
\frac {e^{i\sqrt \lambda |x-y|} - e^{i\sqrt\lambda|x|}}{|x|} \les \lambda^{1/2} \frac {|y|}{|x|},\ \bigg\|\int_{\R^3} \frac {e^{i\sqrt \lambda |x-y|} - e^{i\sqrt\lambda|x|}}{|x|} f(y) \dd y\bigg\|_{|x|^{-1} L^\infty} \les \lambda^{1/2} \|f\|_{\langle x \rangle^{-1} L^1}.
\ee
Since $\langle f, 1\rangle = \langle V g, 1 \rangle=0$, one can subtract the term
$$
\frac 1 {4\pi} \int_{\R^3} \frac {e^{i\sqrt\lambda|x|}-1}{|x|} f(y) \dd y = 0
$$
from
$$
[R_0(\lambda)-R_0(0)] f = \frac 1 {4\pi} \int_{\R^3} \frac {e^{i\sqrt\lambda|x-y|}-1}{|x-y|} f(y) \dd y
$$
then in light of (\ref{rhs1}) we can replace this term by $\frac {e^{i\sqrt\lambda|x-y|}-1}{|x|} f(y)$. Furthermore
$$
\frac {e^{i\sqrt\lambda|x-y|}-1}{|x-y|} - \frac {e^{i\sqrt\lambda|x-y|}-1}{|x|} \les \lambda^{1/2} \frac {|y|}{|x|},\ \bigg\|\int_{\R^3} \bigg(\frac {e^{i\sqrt\lambda|x-y|}-1}{|x-y|} - \frac {e^{i\sqrt\lambda|x-y|}-1}{|x|}\bigg) f(y) \dd y\bigg\|_{|x|^{-1} L^\infty} \les \lambda^{1/2} \|f\|_{\langle x \rangle^{-1} L^1}.
$$
Consequently
$$
\|(A_1(\lambda)-A_1(0)) P_0 f\|_{L^1} \les \lambda^{1/2} \|f\|_{\langle x \rangle^{-1} L^1}
$$
and for the right-hand side in (\ref{expr2}) we obtain
$$
\|P_0 (A_1(\lambda)-A_1(0)) P_1 (P_1 A_1(\lambda) P_1)^{-1} P_1 (A_1(\lambda)-A_1(0)) P_0 f\| \les \lambda \|f\|_{\langle x \rangle^{-1} L^1}.
$$
If $V \in \langle x \rangle^{-1} L^1$, since $\langle f, 1 \rangle = 0$ and
$$
\frac {e^{i\sqrt \lambda |x-y|}-1-i\sqrt\lambda|x-y|}{|x-y|} \les \lambda |x-y|,
$$
a better estimate is
$$
\|(A_1(\lambda)-A_1(0)) P_0 f\|_{L^1} \les \lambda \|f\|_{\langle x \rangle^{-1} L^1}.
$$
Consequently, approximating $V$ with potentials in this better class we obtain
\be\lb{app2}
\|P_0 (A_1(\lambda)-A_1(0)) P_1 (P_1 A_1(\lambda) P_1)^{-1} P_1 (A_1(\lambda)-A_1(0)) P_0 f\| \les o(\lambda) \|f\|_{\langle x \rangle^{-1} L^1}.
\ee

This leads to a contradiction in (\ref{expr2}): $\int_{\R^3} \int_{\R^3} f(x)|x-y|f(y) \dd x \dd y = 0$ and since $\langle f, 1 \rangle = 0$ this means $\|f\|_{\dot H^{-1}} = 0$, hence $f=0$, which contradicts our original assumption that $f \ne 0$.

In the proof of the next theorem we show the absence of positive energy bound states, implying that $\lambda=0$ is also an isolated zero on the boundary. For the present discussion, only what happens in the left half-plane is relevant.

%For comparison, we also present a second proof, following \cite{???} but in a much rougher setting. Again assuming that $V \in \K_0$, at any singular points $\lambda_k \in \C$ there must exist $f_k$ such that
%$$
%\langle f_k, (R_0(0)-R_0(\lambda_k)) f_k \rangle = 0.
%$$
%Using the resolvent identity we get
%$$
%\langle f_k, R_0(\lambda_k) (-\Delta)^{-1} f_k \rangle = 0.
%$$
%Suppose there exists such a sequence $\lambda_k \to 0$. Then, since $R_0(\lambda_k) (-\Delta)^{-1}$ are uniformly bounded from $\M$ to $\K^*$, since the unit sphere is compact, and since for each $f \in \ker(I+VR_0(0))$ the map $\lambda \mapsto \langle f, R_0(\lambda) (-\Delta)^{-1} f \rangle$ is continuous, we obtain that the sequence $f_k$ has a limit point $f$ on the unit sphere for which
%$$
%\langle f, (-\Delta)^{-2} f \rangle = 0,
%$$
%yielding a contradiction. Therefore there are no zeros in a punctured neighborhood of $0$.

Without loss of generality, let $t_n=1$. Then $A_t(\lambda)$
%Consider the symplectic form
%\be\lb{qlt}
%\langle (I+tVR_0(\lambda)) f, g \rangle=\langle f, (R_0(0)-tR_0(\lambda)) \tilde f\rangle=:Q_{\lambda, t}(f, \tilde f)
%\ee
for $|t-1|=\epsilon<<1$ and $|\lambda|<\epsilon^2$ ($t$ close to $1$ and $\lambda$ close to $0$) is invertible if and only if
\be\lb{full}
P_0 A_t(\lambda) P_0 - P_0 A_t(\lambda) P_1 (P_1 A_t(\lambda) P_1)^{-1} P_1 A_t(\lambda) P_0
\ee
is invertible on $\Ker A_1(0)$. Recall $P_0$ is the projection on $\Ker A_1(0)$ and $P_1 = I-P_0$.

For $\lambda=0$, the first term in (\ref{full}) is non-degenerate whenever $t \ne 1$, being equivalent to the $\dot H^{-1}$ dot product, up to sign:
$$
|\langle P_0 A_t(0) P_0 f, \tilde g \rangle| = |t-1| \langle f, \tilde f \rangle_{\dot H^{-1}}. 
$$
Its inverse is of size $|t-1|^{-1}$. Since the second term is quadratic, it is of size $o(t-1)$ as $t$ approaches $1$, so for $t$ sufficiently close to $1$ the expression is invertible.

Furthermore, for each $|t|<1$ this quadratic form is, uniformly in $\lambda$, non-degenerate, because it is bounded from below by some multiple of the $\dot H^{-1}$ dot product. Thus, when $t<1$ is close to $1$, $A_t$ has the same number of negative zeros as $A_1$ in a small punctured neighborhood of $\lambda=0$, namely no zeros.

When $t>1$ is close to $1$, by Rouch\'e's Theorem \ref{rouche} $A_t$ has a constant number of zeros in a small neighborhood of $0$, and by the above it does not have a zero at $0$.

Consider the first term in (\ref{full}), $P_0 A_t(\lambda) P_0 = P_0 (A_t(\lambda)-A_1(0)) P_0$. This can be written as
$$
\langle P_0 (A_t(\lambda)-A_1(0)) P_0 f, \tilde g \rangle = \langle (-\Delta)^{-1/2} f, [I-t (-\Delta)^{1/2} (-\Delta-\lambda)^{-1} (-\Delta)^{1/2}] (-\Delta)^{-1/2} \tilde f \rangle,
$$
where $(-\Delta)^{-1/2}$ maps $\Ker A_1(0)$ injectively into $L^2$. The quadratic form
$$
I-t (-\Delta)^{1/2} (-\Delta-\lambda)^{-1} (-\Delta)^{1/2}
$$
is represented by a hermitian matrix. In order for it to be singular, $1/t$ has to be an eigenvalue of the hermitian matrix
$$
S_\lambda=(-\Delta)^{1/2} (-\Delta-\lambda)^{-1} (-\Delta)^{1/2}
$$
on $(-\Delta)^{-1/2} \Ker A_1(0)$.

Note that $0<S_\lambda \leq S_0=I$ and $S_\lambda$ is a strictly monotone function of $\lambda$, when $\lambda \leq 0$. By the minmax principle the eigenvalues of $S_\lambda$ are monotone, meaning that if we parametrize them as $1 \geq \eta_1(\lambda) \geq \ldots \geq \eta_N(\lambda) > 0$ for each $\lambda$ then $\eta_k(\lambda)$ is a strictly monotone function of $\lambda$ for each $k$.

Consequently, for each $t \geq 1$ close to $1$ and each $1 \leq k \leq N$, $1/t=\eta_k(\lambda)$ for exactly one value of $\lambda$. Thus, the first term in (\ref{full}) has exactly $N$ zeros.

The same is true for the full expression (\ref{full}), but a deeper analysis is required. Further decompose $\Ker A_1(0)$ by considering the subspace of functions $f$ orthogonal to $1$, which correspond to eigenstates: $f=-Vg$ where $g$ is an eigenstate. We take the orthogonal complement of this subspace with respect to $(-\Delta)^{-1}$, which defines a dot product on $\Ker A_1(0)$. The orthogonal complement has dimension at most $1$, consisting of the canonical resonance.

Let $P_0=Q_1+Q_2$, where $Q_1 f = \langle f, g_1 \rangle f_1$ corresponds to the canonical resonance and $Q_2 f = \sum_{k=2}^N \langle f, g_k \rangle f_k$ to the eigenstates. We only examine the most complicated case, when $Q_1 \ne 0$ and $Q_2 \ne 0$.

Let $\lambda=-k^2 \leq 0$. We estimate the second term in (\ref{full}). In particular, we need to deal with factors of $A_t(0)-A_1(0)$. Note that $A_t(0)-A_1(0)=(t-1)V R_0(0)$, so $P_0 [A_t(0)-A_1(0)] = [A_t(0)-A_1(0)] P_0 = (1-t) P_0$. Due to the presence of $P_1$, the contribution of such factors to the second term in (\ref{full}) cancels.

Due to estimates (\ref{app1}) and (\ref{app2}), the second term in (\ref{full}) is then of size
\be\lb{err}
E=P_0 (o(k) Q_1 + o(k^2) Q_2).
\ee

Consider the Taylor expansion of $(-\Delta-\lambda)^{-1}$:
$$
(-\Delta+k^2)^{-1}(x, y) = \frac 1 {4\pi} \frac {e^{-k |x-y|}}{|x-y|} = \frac 1 {4\pi} \bigg(\frac 1 {|x-y|} - k 1 + o(k)\bigg) = \frac 1 {4\pi} \bigg(\frac 1 {|x-y|} - k 1 + \frac {k^2} 2 |x-y| + o(k^2|x-y|)\bigg).
$$
Here $1$ is a rank-one non-negative operator and $\ds\frac 1 {4\pi} |x-y|$ is a negative operator on the space of $\langle x \rangle^{-1} L^1$ functions orthogonal to $1$ (i.e.~whose integral is zero), where it is the integral kernel of $-(-\Delta)^{-2}$.

This produces the following asymptotic expansion of the first term in (\ref{full}) $P_0 A_t(\lambda) P_0 = P_0 [A_t(\lambda)-A_1(0)] P_0$:
$$\begin{aligned}
P_0 A_t(\lambda) P_0 &= \frac 1 {4\pi} \bigg((t-1) P_0 V \frac 1 {|x-y|} P_0 - (tk) Q_1 V 1 Q_1 + Q_1 o(k) Q_1 + \frac {tk^2} 2 Q_2 V |x-y| Q_2 +\\
&+ t Q_1 V \frac {e^{-k|x-y|}-1+k|x-y|}{|x-y|} Q_2 + t Q_2 V \frac {e^{-k|x-y|}-1+k|x-y|}{|x-y|} Q_1 + Q_2 o(k^2) Q_2\bigg).
\end{aligned}$$

We have written explicitly the most problematic two error terms. To bound them, we have to evaluate the integrals
\be\lb{ex}
\int_{\R^3} \int_{\R^3} f_1(x) \frac {e^{-k|x-y|}-1+k|x-y|}{|x-y|} f_j(y) \dd x \dd y,
\ee
where $2 \leq j \leq N$. Note that in the integral
$$
\int_{\R^3} \int_{\R^3} f_1(x) |x-y| f_j(y) \dd x \dd y
$$
one can replace $|x-y|$ by $|x-y|-|x|$, since $\langle f_j, 1 \rangle = 0$. Then $||x-y|-|x||\leq |y|$ and since $f_j \in \langle x \rangle^{-1} L^1$ the integral is well-defined, but not absolutely convergent.

Similarly, let $F(t)=\frac {e^{-kt}-1+kt}{t}$; then $F(|x-y|)-F(|x|) \les k^2 |y|^2$. As before, we can subtract $F(|x|)$ in (\ref{ex}) for free, since $\langle f_j, 1 \rangle=0$, and then the integral of the difference is of size $O(k^2)$, since $f_j \in \langle x \rangle^{-2} \K$.

Grouping most of the error terms together and noting that $Q_1 o(k) Q_1$, $Q_2 O(k^2) Q_1$, and $Q_2 o(k^2) Q_2$ satisfy the bound (\ref{err}), we then approximate (\ref{full}) and its first term $P_0(A_t(\lambda)-A_1(0))P_0$ by
$$\begin{aligned}
B_{t, \lambda} &= \frac 1 {4\pi} \bigg((t-1) P_0 V \frac 1 {|x-y|} P_0 - (t k) Q_1 V 1 Q_1 + \frac {t k^2} 2 Q_2 V |x-y| Q_2 + Q_1 O(k^2) Q_2 \bigg) \\
&= (t-1) B_0 - tk B_1 - tk^2 B_2 + Q_1 O(k^2) Q_2 \\
&= Q_1 [(t-1)I-tkVB_1] Q_1 + Q_2 [(t-1)I - tk^2 VB_2 ] Q_2 + Q_1 O(k^2) Q_2.
\end{aligned}$$
Here $B_0$ is the matrix representation of $\ds (-\Delta)^{-1} = V \frac 1 {4\pi|x-y|}$ as a quadratic form on $\Ker A_1(0)$, which we can take to be the identity matrix by choosing an appropriate basis, while $B_1=Q_1 V B_1 Q_1$ is the matrix representation of $\ds P_0 V \frac 1 {4\pi} P_0$ and $B_2=Q_2 V B_2 Q_2$ is the matrix representation of $\ds-\frac 1 {8\pi} Q_2 V |x-y| Q_2 = \frac 1 2 (-\Delta)^{-2}$ on $Q_2 \Ker A_1(0)$.

One can represent $B_{t, \lambda}$ as a block matrix:
$$
\begin{pmatrix}
(t-1)I-tkB_1 & O(k^2) \\
0 & (t-1)I-tk^2 B_2
\end{pmatrix} = \begin{pmatrix} B_{11} & B_{12} \\ 0 & B_{22} \end{pmatrix}.
$$

We first examine the invertibility of the diagonal entries. Note that $B_1$ has exactly one eigenvalue $\lambda_1$ and $B_2$ has $N-1$ eigenvalues $\lambda_2, \ldots, \lambda_N$, all positive, since $B_1$ and $B_2$ are hermitian and positive definite.

For $t>1$ close to $1$, $B_{t, \lambda}$ has exactly $N$ zeros, one for $\ds \frac {t-1}{tk} = \lambda_1$ and $N-1$ zeros for $\ds \frac {t-1}{tk^2} = \lambda_j$, $2 \leq j \leq N$. In terms of $t-1=\epsilon<<1$, the zeros are at $k \sim \epsilon$ and $k \sim \epsilon^{1/2}$, meaning $\lambda \sim -\epsilon^2$ and $\lambda \sim -\epsilon$.

The inverse of $B_{t, \lambda}$ is
$$
B_{t, \lambda}^{-1} = \begin{pmatrix} B_{11}^{-1} & -B_{11}^{-1} B_{12} B_{22}^{-1} \\ 0 & B_{22}^{-1} \end{pmatrix}.
$$
When $B_{t, \lambda}$ is invertible, the inverses of the diagonal terms are of size
$$
Q_1 O((t-1-tk\lambda_1)^{-1}) Q_1
$$
and
$$
Q_2 O(\sup_{2 \leq j \leq N} |t-1-tk^2\lambda_j|^{-1}) Q_2
$$
and the off-diagonal term in the inverse has size
$$
Q_1 O((t-1-tk\lambda_1)^{-1}) O(k^2) O(\sup_{2 \leq j \leq N} |t-1-tk^2\lambda_j|^{-1}) Q_2.
$$

Consider a contour (a small square, for example) that cuts the real axis at $\lambda=0$ and $\lambda=-k^2=-C\epsilon$. For $C$ fixed and sufficiently large, all $N$ zeros are inside the contour.

On the contour, near $\lambda=0$ (for $|\lambda|<\epsilon^2$), the inverses of the diagonal terms are of size $\epsilon^{-1}$ and the off-diagonal term is of size $1$. Consequently the product between the error term $E$ (\ref{err}) and $B_{t, \lambda}^{-1}$ is of size $E B_{t, \lambda}^{-1} \les o(1)$. Away from $0$ the inverses of the diagonal terms are of size $Q_1 O(k^{-1}) Q_1$ and $Q_2 O(\epsilon^{-1}) Q_2$, while the off-diagonal term is of size $Q_1 O(k \epsilon^{-1}) Q_2 = Q_1 O(k^{-1}) Q_2$, as $k\epsilon^{-1} \les k^{-1}$. Since $\epsilon^{-1} \les k^{-2}$, again $E B_{t, \lambda}^{-1} \les o(1)$.

%the diagonal terms are of size  $Q_1O(\epsilon^{-1/2})Q_1$ and $Q_2O(\epsilon^{-1})Q_2$ and the inverse $B_{t, \lambda}^{-1}$ is of size.
%
%The difference between the approximation $B_{t, \lambda}$ and the full expression (\ref{full}) is of size
%\be\lb{rem}
%Q_1 o(k)Q_1 + Q_2 o(k^2)Q_2.
%\ee
%Indeed, the approximation $B_{t, \lambda}$ comes from a Taylor expansion, of order $1$ for the resonance and order $2$ for the eigenvalues, where the remainder has size (\ref{rem}). The second term in (\ref{full}) is also of this size

By Rouch\'e's Theorem \ref{rouche}, then, for sufficiently small $\epsilon$, the full expression (\ref{full}) has the same number $N$ of zeros inside the contour as the approximation $B_{t, \lambda}$.

Thus, the number of negative eigenvalues of $H_t$ grows by no more than the number of zeros at each transition point $t_n$, considered with multiplicity.

Finally, note that $t$ is a zero for $A_t(0)=I+tV (-\Delta)^{-1}$ exactly when $-t^{-1}$ is an eigenvalue of $V (-\Delta)^{-1}$ and $t \in [0, 1]$ corresponds to real eigenvalues below $-1$. If $V \in \K_0$, then $V (-\Delta)^{-1}$ is compact, so it has finitely many eigenvalues in the interval $(-\infty, -1]$ and more generally at any positive distance away from $0$.
\end{proof}

We next prove the absence of positive eigenvalues, using the following result about bound states along the positive real axis.
\begin{lemma}\lb{more_decay} Suppose that $V \in \K_0$ and $f \in \ker(I+VR_0(0))$. If $\langle f, 1 \rangle=\int_{\R^3} f = 0$, then $f \in \langle x \rangle^{-1}\M \cap \langle x \rangle^{-1}\K$ and $g=R_0(0)f \in \langle x \rangle^{-1} L^\infty \cap \langle x \rangle^{-1} \K^*$.

More generally, suppose that $V \in \K_0$ and $f \in \Ker(I+R_0(\eta_0^2))$, $\eta_0 \in \R$. If for each $x \in \supp V$ $\int_{\R^3} e^{i\eta_0 |x-y|} f(y) \dd y = 0$, then $f \in \langle x \rangle^{-1}\M \cap \langle x \rangle^{-1}\K$ and $g=R_0(\eta_0^2)f \in \langle x \rangle^{-1} L^\infty \cap \langle x \rangle^{-1} \K^*$.
\end{lemma}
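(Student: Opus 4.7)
The plan is to combine the integral equation $f=-Vg$ (where $g=R_0(\eta_0^2)f$) with the vanishing hypothesis in a two-stage bootstrap: first establish the unweighted decay $g\in\langle x\rangle^{-1}L^\infty$ without invoking the cancellation, then exploit the cancellation to upgrade this to the weighted $\K^*$-, $\M$- and $\K$-bounds in the statement.

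For the preliminary decay I would mimic the splitting argument used in the proof of Proposition~\ref{finite}: by modified distal Kato, pick $R$ large and split $V=V_1+V_2$ with $V_2=\chi_{|y|\le R}V$ and $\|V_1\|_\K$ as small as needed. On $\langle x\rangle^{-1}L^\infty$ the operator $R_0(\eta_0^2)V_1$ has norm $\les\|V_1\|_\K$ via the estimate $\int|V_1(y)|/(\langle y\rangle|x-y|)\,dy\les\|V_1\|_\K/\langle x\rangle$ (split into $|y|\le|x|/2$, using $|x-y|\gtrsim|x|$ and $\int|V_1|/\langle y\rangle\le\|V_1\|_\K$; and $|y|>|x|/2$, using $\langle y\rangle\gtrsim|x|$). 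Hence $I+R_0(\eta_0^2)V_1$ is invertible on $\langle x\rangle^{-1}L^\infty$; since $V_2g$ is bounded with compact support its Helmholtz potential $R_0(\eta_0^2)V_2g$ lies in $\langle x\rangle^{-1}L^\infty$, and solving $(I+R_0(\eta_0^2)V_1)g=-R_0(\eta_0^2)V_2g$ yields $g\in\langle x\rangle^{-1}L^\infty$. The bound $|f|\le|V||g|\les|V|/\langle x\rangle$ then immediately gives $\langle x\rangle f\in\K$ with norm $\les\|V\|_\K$.

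For the cancellation step with $\eta_0=0$, using $\int f\,dy=0$ in the representation $g=R_0(0)f$ yields
$$g(x)=\frac{1}{4\pi}\int\!\Big(\tfrac{1}{|x-y|}-\tfrac{1}{|x|}\Big)f(y)\,dy,$$
and $\bigl||x|-|x-y|\bigr|\le|y|$ combined with the preliminary decay gives the pointwise majorant
$$|x|\,|g(x)|\;\les\;\int\!\frac{|y|\,|V(y)|\,|g(y)|}{|x-y|}\,dy.$$
For general $\eta_0\in\R$ the hypothesis produces the same inequality on $\supp V$ after the analogous manipulation with $e^{i\eta_0|x-y|}$, and extension to $\R^3$ follows by re-applying $R_0(\eta_0^2)$ once more. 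This identity exhibits $\langle x\rangle g$ as pointwise dominated by the Newtonian potential of $|y|\,|V|\,|g|\in\M$, placing $\langle x\rangle g\in\K^*$ by Proposition~\ref{katodual}. The $\M$-conclusion $\langle x\rangle f\in\M$ then follows by Kato pairing: $\int\langle x\rangle|V||g|\,dx\le\|V\|_\K\,\|\langle x\rangle g\|_{\K^*}$.

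The main obstacle I anticipate is verifying that the density $|y||V(y)||g(y)|$ is genuinely in $\M$ when $V\notin\M$ globally, since a priori one only has $|y||V||g|\les|V|$. Closing this requires the cancellation hypothesis essentially, through a secondary bootstrap in a finer weighted space in which the extra $|y|/|x|$ factor from the cancelled kernel upgrades the decay of $g$ from $\langle x\rangle^{-1}$ to a rate sufficient for $|y||V||g|$ to be integrable; this mirrors the finer asymptotic analysis carried out in the proof of Proposition~\ref{finite}.
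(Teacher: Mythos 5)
Your preliminary decay step is sound and in fact coincides with what the paper already establishes inside the proof of Proposition~\ref{finite} (the $g\in\langle x\rangle^{-1}L^\infty$ bound via the split $V=V_1+V_2$). Your second stage correctly extracts the cancellation identity $|x||g(x)|\les\int\frac{|y||V(y)||g(y)|}{|x-y|}\,dy$. However, the gap you flag at the end is real and, as sketched, is not closable by a ``secondary bootstrap in a finer weighted space'': from $|g|\les\langle y\rangle^{-1}$ the best the right-hand side gives back is $\int\frac{|V|}{|x-y|}\,dy\leq\|V\|_\K$, i.e.\ $|x||g(x)|\les 1$ again, with no gain, because the obstruction is the $\K$-norm structure (you never get $|y||V||g|$ into $\M$, and $V\in\K_0$ need not be $L^1$). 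Iterating in weighted $L^\infty$ spaces does not improve the rate.

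The paper closes this by a different route: it targets $f$, not $g$, and sets up a genuine contraction in $\M\cap\K$. Using the cancellation it writes $|x|f(x)$ as a sum of (i) the contribution of the compactly supported part $V_1$, which lies in $\M\cap\K$ outright since $|x|V_1\les V_1$ and $g\in L^\infty$, and (ii) an operator applied to $|y|f(y)$ whose kernel is $V_2(x)K(x,y)$ with $|K(x,y)|\leq\frac{1}{4\pi|x-y|}$ and $\|V_2\|_\K<4\pi$, hence a contraction on both $\M$ and $\K$. Solving the resulting fixed-point equation gives $|x|f\in\M\cap\K$ directly, and the claim for $g=R_0(\eta_0^2)f$ follows by rewriting its defining integral with the same cancellation. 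Your logical order is inverted relative to this: you want $\langle x\rangle g\in\K^*$ first and $\langle x\rangle f\in\M$ as a consequence, but the availability of $|y||V||g|\in\M$ is essentially the $f$-conclusion, which must be obtained first (or simultaneously via the contraction). Your plan could be repaired by running the analogous contraction on $\K^*$ for $h=|y||g|$ (using $\|V_2 h\|_\M\leq\|V_2\|_\K\|h\|_{\K^*}$ and $\|R_0\mu\|_{\K^*}\les\|\mu\|_\M$, plus a pointwise monotonicity argument to identify $h$ with the fixed point), but as written the proposal does not contain this step.
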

This is partly already known, following \cite{ersc} and others.
\begin{proof} In both cases, since $V \in \K_0$, the kernel has finite dimension and $f \in \M \cap \K$. Write the identity $f=-VR_0(\eta_0^2)f$ as
$$
f(x)=\frac {V(x)}{4\pi} \int_{\R^3} e^{i\eta_0|x-y|} \bigg(\frac 1 {|x|} - \frac 1 {|x-y|}\bigg) f(y) \dd y,
$$
where $\lambda=\eta^2$, and note that $|\frac 1 {|x|} - \frac 1 {|x-y|}| \leq \frac {|y|}{|x-y||x|}$.

Since $V$ has the modified distal property, write it as $V=V_1+V_2$, where $V_1$ has compact support and $\|V_2\|_\K < 4\pi$ is small. Then
$$
|x| f(x) = -\frac {V_1(x)}{4\pi} \int_{\R^3} \frac {e^{i\eta_0|x-y|}}{|x-y|} f(y) \dd y + \frac {V_2(x)}{4\pi} \int_{\R^3} \frac {e^{i\eta|x-y|}}{|y|} \bigg(\frac 1 {|x|} - \frac 1 {|x-y|}\bigg) |y| f(y) \dd y.
$$
The first term is compactly supported and the second represents a contraction, so $|x| f(x) \in \M \cap \K$. The conclusion about $g$ follows by rewriting its defining integral in the same fashion.
\end{proof}

\begin{proposition}\lb{no_pos} If $V \in \K_0$ is real-valued (self-adjoint) and has the local and modified distal Kato properties, then $H=-\Delta+V$ has no embedded resonances in $(0, \infty)$ and embedded eigenvalues in $(0, \infty)$ are discrete. If in addition $V \in L^{3/2, \infty}$ or $V \in \dot W^{-1/4, 4/3}$, then there are no embedded eigenvalues.
\end{proposition}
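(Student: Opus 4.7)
The plan is to address the three assertions in order. For \emph{discreteness} of the embedded zeros of $I + VR_0(\lambda + i0)$ on $(0,\infty)$, I would invoke the Gohberg--Sigal theorem (Theorem~\ref{rouche}). By the preamble, $V \in \K_0$ makes $\lambda \mapsto VR_0(\lambda)$ compact and continuous up to the boundary $[0,\infty)$, and moreover analytic through each point of $(0,\infty)$ into a strip of the second Riemann sheet (in exponentially weighted spaces), so in a genuine two-dimensional neighborhood of any $\lambda_0 > 0$ the family $I + VR_0(\lambda)$ is analytic and Fredholm. Since Proposition~\ref{low_bd} gives invertibility for $\lambda \ll 0$, Gohberg--Sigal applied along small contours enclosing points of $(0,\infty)$ forces the non-invertibility set to be discrete.

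For \emph{absence of resonances}, I fix such a $\lambda_0 = \eta_0^2 > 0$ and a nonzero $f \in \ker(I + VR_0(\lambda_0 + i0)) \subset \M$, and set $g = R_0(\lambda_0 + i0) f$, so that $f = -Vg$ solves $(-\Delta + V - \lambda_0)g = 0$ with outgoing Sommerfeld radiation. Since $V$ is real, $\langle f, g\rangle = -\langle Vg, g\rangle$ is real; Stone's formula combined with Plancherel for $dE_{-\Delta}(\lambda_0)$ then gives
\[
0 = \im \langle f,g\rangle = \tfrac{1}{2i}\langle f,(R_0(\lambda_0+i0)-R_0(\lambda_0-i0))f\rangle = c\,\eta_0 \int_{S^2} |\hat f(\eta_0\omega)|^2\,d\omega,
\]
so the scattering amplitude vanishes: $\hat f(\eta_0\omega) \equiv 0$. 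This is precisely the statement that the leading $|x|^{-1}$ outgoing asymptotic of $g$ drops out; combined with the identity $f = -VR_0(\lambda_0+i0)f$ and stationary phase in $y$ over $\int e^{i\eta_0|x-y|}f(y)\,dy$, it should yield the pointwise hypothesis of Lemma~\ref{more_decay} on $\supp V$. Applying that lemma gives $f \in \langle x\rangle^{-1}(\M\cap\K)$ and $g \in \langle x\rangle^{-1}(L^\infty\cap\K^*)$; iterating the decay gain eventually places $g \in L^2$, so the putative resonance is in fact an embedded $L^2$ eigenvalue. Together with discreteness, this yields the first two conclusions.

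For the \emph{third assertion}, under the added hypothesis $V \in L^{3/2,\infty}$ or $V \in \dot W^{-1/4,4/3}$ I simply cite the uniqueness theorems of Ionescu--Jerison~\cite{ioje} and Koch--Tataru~\cite{kota} (cf.~\cite{golsch}), which show that no nonzero $L^2$ solution of $(-\Delta + V - \lambda_0)g = 0$ with $\lambda_0 > 0$ exists, so the discrete set of embedded eigenvalues is empty. The main obstacle is the step converting the optical-theorem vanishing $\hat f \equiv 0$ on the sphere of radius $\eta_0$ into the pointwise condition of Lemma~\ref{more_decay} on the (generically unbounded) set $\supp V$, and then closing the bootstrap without losing control of the tail of $V$: the oscillatory integrals must be estimated via truncation combined with the modified distal Kato property rather than by pointwise decay of $V$, which replaces the classical Agmon exponential-decay argument that would require much more regularity on the potential.
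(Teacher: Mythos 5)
Your item (c) and the Agmon-type optical-theorem computation in item (b) line up with the paper, but there are two concrete gaps in the way you plan to close the argument.

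First, on \emph{discreteness}. You propose to extend $\lambda \mapsto I + VR_0(\lambda)$ analytically across a point $\lambda_0>0$ into a strip of the second Riemann sheet and apply Gohberg--Sigal on small two-dimensional contours. On the second sheet the kernel of $R_0$ contains the factor $e^{-i\eta|x-y|}$ with $\im\eta<0$, which grows exponentially; to get a bounded, compact, analytic family there one must either let $V$ absorb this growth (i.e.~assume exponential decay of $V$) or pass to exponentially weighted spaces. For a general $V\in\K_0$ neither is available, and the paper explicitly flags this: ``operators on the other sheet are only bounded in exponentially weighted spaces with positive weights.'' Moreover, even if one switches to weighted spaces, the kernel of $I+VR_0(\lambda_0+i0)$ there need not coincide with the kernel in $\M\cap\K$, so the contour count in the weighted space would not obviously detect the zeros you care about. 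The paper instead stays entirely on the closed upper half-plane: it assumes a sequence of zeros $\eta_k\to\eta_0$, runs a Feshbach reduction to $P_0(\M\cap\K)$ with $P_0$ the projection onto $\ker A_1(\eta_0^2)$, expands $A_1(\eta_k^2)-A_1(\eta_0^2)$ in $\eta_k-\eta_0$, and derives $\langle f_0,R_0(\lambda_0+i0)^2 f_0\rangle=0$, which together with the decay conclusions forces $f_0=0$, a contradiction. That boundary argument never needs the second sheet.

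Second, on showing a putative resonance is an $L^2$ eigenfunction. You propose to apply Lemma~\ref{more_decay} once and then ``iterate the decay gain until $g\in L^2$.'' Lemma~\ref{more_decay} gives exactly one extra power: $g\in\langle x\rangle^{-1}L^\infty\cap\langle x\rangle^{-1}\K^*$. But $\langle x\rangle^{-1}L^\infty(\R^3)\not\subset L^2(\R^3)$ (the integral $\int\langle x\rangle^{-2}\,dx$ diverges), and the lemma does not self-improve: a second application would require an additional moment condition (vanishing of $\int |y|\,e^{i\eta_0|x-y|}f(y)\,dy$ on $\supp V$, say) that you have not produced. You also correctly flag, but do not resolve, the mismatch between the Fourier-restriction vanishing $\widehat{Vg_0}\equiv 0$ on $\eta_0 S^2$ and the pointwise hypothesis of Lemma~\ref{more_decay} on $\supp V$. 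The paper sidesteps both issues: once $\widehat{Vg_0}$ vanishes on $\eta_0 S^2$, it invokes Corollary~13 of \cite{golsch} (a restriction-theorem-based result) to conclude directly that $g_0\in L^2$, so that the resonance is actually an embedded eigenvalue, and then cites \cite{ioje} and \cite{kota} exactly as you do for the final assertion.
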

Note that at zero energy there can be both resonances and eigenvalues, which in the rest of the paper we explicitly exclude by Assumption \ref{assum}. The present result does not depend on Assumption \ref{assum}.

The absence of embedded resonances in $(0, \infty)$ was proved in \cite{goldberg}, in the sense that any embedded resonance must actually be an eigenvalue. Under extra assumptions on the potential $V$, \cite{ioje} and \cite{kota} prove there are no embedded eigenvalues either.%The proof given here bypasses this step and is different from the ones in \cite{ioje} and \cite{kota}.
\begin{proof} %We proceed by contr/adiction.

%Let $\lambda_0+i0=\eta_0^2 \in (0, \infty)$ (with no loss of generality, take $\eta_0>0$) be a zero of $A_1(\lambda)=I+VR_0(\lambda)$.
%In fact, consider the Hilbert space $\{f \mid \widehat f \in L^2_{loc}, |\lambda_0-|\xi|^2|^{-1/2} \widehat f \in L^2\}$. This is a closed Hilbert subspace of $\dot H^{-1}$. Clearly, $\ker(A_1(\lambda_0))$ is included in this Hilbert space and $R_0(\lambda_0\pm i0)$ defines a (possibly degenerate) symplectic form on its direct sum with itself.
%Let $\lambda_0=\eta_0^2$ be the infimum of the set
%\be\lb{def_l0}
%\lambda_0 = \inf\{\lambda > 0 \mid A_1(\lambda)=I+VR_0(\lambda) \text{ has a zero}\}.
%\ee
%Then $A_1$ must have a zero at $\lambda_0$ (or else $A_1(\eta^2)$ will be invertible on a neighborhood of $\eta_0$, meaning it cannot be the infimum).

Let us prove that any zero on the boundary is isolated along the boundary and in $\C \setminus [0, \infty)$ both, meaning that it is isolated on the whole closed sheet/half plane.%By our definition of $\lambda_0$, this will imply $\lambda_0 \ne 0$.

Consider the operator $A_1(\eta_k^2)$ on $\ker A_1(\eta_0^2)$. Since $V \in \K_0$, $\ker A_1(\eta_0^2)$ is finite-dimensional, so its unit sphere is compact. Let $P_0$ be the projection on $\Ker A_1(\eta_0^2)$ and $P_1=I-P_0$. For $A_1(\eta_k^2)$ not to be invertible, by Fechbach's Lemma \ref{fechbach}
$$
\tilde B_{1, \lambda_k} = P_0 A_1(\eta_k^2) P_0 - P_0 A_1(\eta_k^2) P_1 (P_1 A_1(\eta_k^2) P_1)^{-1} P_1 A_1(\eta_k^2) P_0
$$
must not be invertible. Hence there must exist $f_k \in \ker A_1(\eta_0^2)$, $f_k \ne 0$, such that $\tilde B_{1, \lambda_k} f_k=0$.% (or the same for its adjoint, but recall that $V A_1^*(\lambda) = A_1(\ov \lambda) V$).

Suppose $\eta_k \to \eta_0$ and $f_k \to f_0$, $f_0 \ne 0$, in $\ker A_1(\eta_0^2)$. Then
$$
0=P_0[A_1(\eta_k^2)-A_1(\eta_0^2)] P_0f_k + o(\eta_k-\eta_0) = P_0 V [R_0(\eta_k^2)-R_0(\eta_0^2)] f_k + o(\eta_k-\eta_0) = \frac i {4\pi} (\eta_k-\eta_0) P_0 V e^{i\eta_0|x-y|} f_k + o(\eta_k-\eta_0).
$$
Therefore $P_0 V e^{i\eta_0|x-y|} f_k \to 0$, so
\be\lb{formula1}
P_0 V e^{i\eta_0|x-y|} f_0 = 0.
\ee

If $\eta_0 \ne 0$, then
$$
e^{i\eta_0|x-y|} = \partial_\eta R_0(\eta_0^2) = 2\eta_0 R_0(\lambda_0)^2.
$$
Pairing $P_0 V e^{i\eta_0|x-y|} f_0 = 0$ with $g_0=R_0(\lambda_0 + i0)f_0$, we get that $\langle f_0, R_0(\lambda_0+i0)^2 f_0 \rangle=0$.

At this point, note that $g_0 \in L^\infty \cap \K^*$ can be paired with $V g_0 \in \K \cap L^1$ and the outcome is real-valued, since $V$ is self-adjoint. By the usual bootstrap argument \cite{Agm}, this can be written as
\be\lb{temp}
0 = \Im \langle R_0(\lambda_0+i0) V g_0, V g_0 \rangle = C(\eta_0) \int_{S^2} |\widehat {V g_0}(\eta_0 \omega)|^2 \dd \omega.
\ee
Since the integral is non-negative and equals $0$, the integrand must vanish. Consequently, $g_0=R_0(\lambda+i0) V g_0$ if and only if $g_0=R_0(\lambda-i0) V g_0$. Also, $R_0(\lambda+i0) f_0 = R_0(\lambda-i0) f_0$.

By Lemma \ref{more_decay}, $f_0 \in \langle x \rangle^{-1} \M \cap \langle x \rangle^{-1} \K$ and $g_0 \in \langle x \rangle^{-1} L^\infty \cap \langle x \rangle^{-1} \K^* \subset L^2$. Then
$$
\|g_0\|_{L^2}^2 = \langle g_0, g_0 \rangle = \langle R_0(\lambda+i0) f_0, R_0(\lambda+i0) f_0\rangle = \langle R_0(\lambda-i0) f_0, R_0(\lambda+i0) f_0\rangle = \langle f_0, R_0(\lambda+i0)^2 f_0 \rangle = 0.
$$
%In other words, $\partial_\eta R_0$ is positive definite on $\ker A_1(\lambda_0)$.
Then $g_0=0$, so $f_0=0$, leading to a contradiction. Hence any nonzero zero $\lambda_0 \ne 0$ along the boundary is isolated.

The analysis for $\eta_0=0$ is different, following \cite{ersc}. Again, suppose that $\eta_k \to \eta_0$ and $f_k \to f_0$ in $\ker A_1(0)$, where $\tilde B_{1, \lambda_k}(\eta_k^2) f_k = 0$.

A priori, $f_0 \in \M \cap \K$. Repeating the same reasoning as in (\ref{formula1}), we obtain that $P_0 V \int_{\R^3} f_0 = 0$ and pairing with $g_0 =R_0(0) f_0 \in L^\infty \cap \K^*$ it follows that
\be\lb{formula2}
\langle f_0, 1\rangle = \int_{\R^3} f_0 = 0.
\ee

Then, by Lemma \ref{more_decay}, $f_0 \in \langle x \rangle^{-1} \M \cap \langle x \rangle^{-1} \K$ and $g_0 \in \langle x \rangle^{-1} L^\infty \cap \langle x \rangle^{-1} \K^*$. Next,
$$
0=P_0 V[R_0(\eta_k^2)-R_0(0)]f_k+o(\eta_k^2)=P_0(\eta_k V \langle f_k, 1\rangle + \eta_k^2 V\partial_\lambda R_0(0) f_k) + o(\eta_k^2).
$$
Since $g_0 \in \langle x \rangle^{-1} L^\infty \cap \langle x \rangle^{-1} \K^* \subset L^2$, and $\langle g_0, V\rangle = \langle f_0, 1 \rangle = 0$, pairing $g_0$ with the previous expression we get
$$
\eta_k^2 \langle f_0, R_0^2(0) f_k \rangle + o(\eta_k^2)=0
$$
and by passing to the limit $\langle f_0, R_0^2(0) f_0 \rangle=0$. However, $g_0=R_0(0) f_0 \in L^2$, so this is equivalent to $\|g_0\|_{L^2}^2 = \langle R_0(0) f_0, R_0(0) f_0 \rangle = 0$. Then $f_0=-Vg_0=0$, producing a contradiction.

This finishes the proof that $\lambda_0=0$ must be an isolated zero, if it is a zero of $A_1$.

%By our definition of $\lambda_0$ (\ref{def_l0}), then, $\lambda_0>0$.

%We have obtained that $(V \otimes 1) f_0=0$, which merely implies that $\int f_0 = 0$ or $\widehat f_0(0)=0$. Then $g_0=R_0(\lambda_0 \pm i0)  \in L^2$ and etc..

%Consider the possibly degenerate symplectic form defined by $T=\frac 1 2 (R_0(\lambda_0+i0) + R_0(\lambda_0-i0))$ on $\ker (A_1(\lambda_0))$. Since $T$ is bounded and self-adjoint, $\langle Tf, f \rangle$ only takes real values.
%
%By a similar argument to \cite{???} we prove that $\lambda_0$ must be an isolated zero of $A_1(\lambda)$. Firstly, the symplectic form is always non-degenerate when $\im \lambda \ne 0$, so any other zeros must also be real. Zeros can only occur at points $\eta_n^2$ where the form is degenerate. In particular, this implies that the restriction to the sphere of radius $\lambda_k$ is zero. This 
%
%Likewise, it must be 

Next, if $\eta_0 \ne 0$, by (\ref{temp}) $\widehat f_0 = \widehat {V g_0} = 0$ on $\eta_0 S^2$, so by Corollary 13 in \cite{golsch} $g_0 \in L^2$. Thus, there can be no embedded resonances, only eigenvalues. Then, by the results of \cite{kota}, under the additional assumption that $V \in L^{3/2, \infty}$ or $V \in \dot W^{-1/4, 4/3}$, embedded eigenvalues are also excluded.

\end{proof}

\begin{proposition}\lb{cond_wiener} Consider $V \in \K$ and let $S(t)(x, y)= \chi_{t \geq 0}\frac 1 {4 \pi t} V(x) \delta_t(|x-y|)$. If $V$ satisfies the distal Kato condition, then $S$ decays at infinity:
$$
\lim_{R \to \infty} \|\chi_{\rho \geq R} S(\rho)\|_{L^\infty_y \M_x L^1_\rho} = 0.
$$
If $V \in \K_0$, then $S^N$ is continuous under translation when $N=2$:
$$
\lim_{\epsilon \to 0} \|S^2(\rho+\epsilon) - S^2(\rho)\|_{L^\infty_y \M_x L^1_\rho} = 0.
$$
\end{proposition}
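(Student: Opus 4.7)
The plan is to handle the two conclusions separately, since they are essentially independent and the second is a restatement of a result already proved in this appendix.

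For the decay-at-infinity statement, I would just unpack the triple norm directly. The innermost $L^1_\rho$ norm integrates out the delta function (using $\int_0^\infty \phi(\rho)\,\delta_\rho(|x-y|)\,d\rho = \phi(|x-y|)$), yielding
\[
\|\chi_{\rho\geq R} S(\rho)(x,y)\|_{L^1_\rho} = \frac{|V(x)|}{4\pi|x-y|}\,\chi_{|x-y|\geq R}.
\]
Applying the $\M_x$ norm and then $\sup_y$ gives exactly
\[
\|\chi_{\rho\geq R} S(\rho)\|_{L^\infty_y \M_x L^1_\rho} = \frac{1}{4\pi}\sup_y \int_{|x-y|\geq R} \frac{|V(x)|\,dx}{|x-y|},
\]
i.e.\ the distal Kato tail up to a harmless constant. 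This tends to $0$ as $R \to \infty$ precisely because $V$ satisfies the distal Kato condition, so the first conclusion is immediate.

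For the continuity-under-translation claim with $N=2$, this is exactly the content of Lemma \ref{second_cond}, already proved in this appendix under the hypothesis $V \in \K_0$. I would simply cite it. Briefly, the proof of that lemma reduces, via the local and modified distal Kato properties together with local integrability, to the case of compactly supported $V \in L^1 \cap \K$ with the $t$-variable bounded away from $0$ and $\infty$; realizes the relevant piece of $S \ast_t S$ as an integral over a portion of a revolution ellipsoid of bounded curvature; uses Fubini to place it in $L^\infty_{x,y} L^1_t$ with norm controlled by $\|V\|_{L^1}$; and then invokes density of continuous functions in $L^1$ together with the Kolmogorov--Fr\'echet theorem to get equicontinuity of the slices $S^2(x,y)(\cdot)$ in $L^1_t$. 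Passing back through the reductions yields the stated translation continuity in $L^\infty_y \M_x L^1_\rho$.

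Thus the substantive content of Proposition \ref{cond_wiener} is packaged into Lemma \ref{second_cond}, and the only new work is the one-line Fubini calculation behind the first conclusion. The main conceptual point worth recording is that the decay-at-infinity Wiener condition requires \emph{only} the distal Kato property, whereas the translation-continuity condition needs the full $\K_0$ hypothesis (specifically, local integrability and the modified distal property, used to produce the compactly supported $L^1$ approximation in step (a) above). No serious obstacle is expected beyond what was already overcome in Lemma \ref{second_cond}.
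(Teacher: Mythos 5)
Your proposal is correct and follows essentially the same route as the paper: the first claim is handled by observing (as you compute explicitly) that the norm in question is exactly the distal Kato tail, and the second claim is handed off to Lemma \ref{second_cond}. The paper's proof additionally records a density-by-test-functions argument that is useful for the alternative $N=4$ route via \cite{becgol} but is redundant for $N=2$, since Lemma \ref{second_cond} already covers all $V\in\K_0$, exactly as you note.
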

\begin{proof}
The decay at infinity condition for Wiener's theorem is exactly equivalent to the distal Kato condition.

Likewise, if $\tilde V$ is an approximation of $V$,
$$
\|S*S-\tilde S*\tilde S\|_{L^\infty_y \K_x L^1_\rho} \leq \|S-\tilde S\|_{L^\infty_y \K_x L^1_\rho} (\|S\|_{L^\infty_y \K_x L^1_\rho} + \|\tilde S\|_{L^\infty_y \K_x L^1_\rho})
$$
and in general
$$
\|S^N-\tilde S^N\|_{L^\infty_y \K_x L^1_\rho} \leq \|S-\tilde S\|_{L^\infty_y \K_x L^1_\rho} (\|S\|_{L^\infty_y \K_x L^1_\rho}^{N-1} + \|S\|_{L^\infty_y \K_x L^1_\rho}^{N-2} \|\tilde S\|_{L^\infty_y \K_x L^1_\rho} + \ldots + \|\tilde S\|_{L^\infty_y \K_x L^1_\rho}^{N-1}).
$$
The same is true for translations of $S^N$ and $\tilde S^N$. Thus, if the continuity condition holds for a sequence of such $\tilde S \to S$, for some fixed $N$, then it also holds for $S$.

Furthermore, $\|S-\tilde S\|_{L^\infty_y \K_x L^1_\rho} \leq \|V-\tilde V\|_\K$. Hence, if the continuity condition holds for all test functions for some fixed $N$, it will follow for all $V \in \K_0$ for the same $N$.

Finally, the continuity condition holds for all test functions, by \cite{becgol}, when $N=4$ and, by Lemma \ref{second_cond}, when $N=2$.
\end{proof}

\section*{Acknowledgments}
M.B.~has been supported by the NSF grant DMS--1700293 and by the Simons Collaboration Grant No.~429698. M.B.~would also like to thank the University at Albany and the University of Cincinnati for hosting him while working on some parts of this paper.

Both authors would like to thank Shijun Zheng for the helpful conversations.

\end{document}